\documentclass[12pt]{amsart}
\setlength\parindent{0pt}

\usepackage{amssymb}
\usepackage{amsmath}
\usepackage{amsthm}
\usepackage{framed}
\usepackage{xcolor}
\usepackage{mathtools}
\usepackage{mathrsfs}
\usepackage{tikz-cd, amssymb}
\usetikzlibrary{decorations.pathmorphing}
\usetikzlibrary{shapes.geometric}
\usepackage{ upgreek }
\usepackage{bbm}
\usepackage{enumitem}
\usepackage{stmaryrd}
\usepackage{blindtext}
\usepackage[hidelinks]{hyperref}
\usepackage[margin=3.1cm]{geometry}
\usepackage{verbatim}
\usepackage{multicol}
\usepackage{cancel}
\usepackage{tabularray}
\usepackage{comment}
\usepackage{quiver}
\usepackage{graphicx}


\definecolor{myred}{RGB}{250,170,85}
\definecolor{mygreen}{RGB}{155,250,100}
\definecolor{myyellow}{RGB}{245,215,25}
\definecolor{mypurple}{RGB}{170,85,250}

\definecolor{ygreen}{rgb}{0.12, 0.67, 0.60}
\definecolor{dgreen}{rgb}{0.08, 0.46, 0.03}

\makeatletter
\newcommand\RedeclareMathOperator{%
  \@ifstar{\def\rmo@s{m}\rmo@redeclare}{\def\rmo@s{o}\rmo@redeclare}%
}
\newcommand\rmo@redeclare[2]{%
  \begingroup \escapechar\m@ne\xdef\@gtempa{{\string#1}}\endgroup
  \expandafter\@ifundefined\@gtempa
     {\@latex@error{\noexpand#1undefined}\@ehc}%
     \relax
  \expandafter\rmo@declmathop\rmo@s{#1}{#2}}
\newcommand\rmo@declmathop[3]{%
  \DeclareRobustCommand{#2}{\qopname\newmcodes@#1{#3}}%
}
\@onlypreamble\RedeclareMathOperator
\makeatother

\DeclareMathOperator{\RR}{\mathbb{R}}
\DeclareMathOperator{\CC}{\mathbb{C}}
\DeclareMathOperator{\ZZ}{\mathbb{Z}}

\DeclareMathOperator{\Z2}{\mathbb{Z}/2}
\DeclareMathOperator{\HZ}{\mathrm{H}\mathbb{Z}}
\newcommand{\HZmod}{\mathrm{H}\mathbb{Z} / 2}

\DeclareMathOperator{\KO}{\mathrm{KO}}
\DeclareMathOperator{\ku}{\mathrm{ku}}
\DeclareMathOperator{\ko}{\mathrm{ko}}

\DeclareMathOperator{\KU}{\mathrm{KU}}

\RedeclareMathOperator{\H}{\mathrm{H}}
\DeclareMathOperator{\C}{\mathcal{C}}

\DeclareMathOperator{\id}{\mathrm{id}}

\DeclareMathOperator{\Spin}{\mathrm{Spin}}

\DeclareMathOperator{\st}{\: \big| \:}
\DeclareMathOperator{\BSpin}{\mathrm{BSpin}}

\DeclareMathOperator{\ESpin}{\mathrm{ESpin}}
\DeclareMathOperator{\MSpin}{\mathrm{MSpin}}
\newcommand{\MSpinR}[0]{\mathrm{MSpin}^c_{\RR}}
\newcommand{\MSpinRe}[0]{(\mathrm{MSpin}^c_{\RR})^{e}}
\newcommand{\kuangle}[1]{\ku\langle {#1} \rangle}
\newcommand{\kuRangle}[1]{\ku_{\RR}\langle {#1} \rangle}

\DeclareMathOperator{\SO}{\mathrm{SO}}
\DeclareMathOperator{\U}{\mathrm{U}}
\RedeclareMathOperator{\O}{\mathrm{O}}
\RedeclareMathOperator{\P}{\mathcal{P}}

\DeclareMathOperator{\B}{\mathrm{B}}
\DeclareMathOperator{\E}{\mathrm{E}}
\DeclareMathOperator{\BSO}{\mathrm{BSO}}

\DeclareMathOperator{\Fun}{Fun}
\newcommand{\Sp}{\mathrm{Sp}}
\newcommand{\Sq}{\mathrm{Sq}}
\newcommand{\SpBC}{\mathrm{Sp}^{\mathrm{B}C_2}}
\newcommand{\BC}{\mathrm{B}C_2}

\DeclareMathOperator{\Map}{\mathrm{Map}}
\DeclareMathOperator{\map}{\mathrm{map}}
\RedeclareMathOperator{\c}{\overline{(\:\:)}}

\setcounter{tocdepth}{1}

\theoremstyle{definition}
\newtheorem{theorem}{Theorem}[section]
\newtheorem{lemma}[theorem]{Lemma}

\newtheorem{conjecture}[theorem]{Conjecture}
\newtheorem{corollary}[theorem]{Corollary}
\newtheorem{question}[theorem]{Question}
\newtheorem{proposition}[theorem]{Proposition}

\numberwithin{subcase}{case}
\newtheorem{remark}[theorem]{Remark}

\newtheorem{definition}[theorem]{Definition}

\hypersetup{
    colorlinks=true,
    linkcolor=dgreen,
    citecolor = dgreen,
    urlcolor=dgreen,
}

\title{The homotopy fixed points of Real spin bordism}

\author{ Hassan H. Abdallah}
\address{Department of Mathematics, Wayne State University, Detroit, MI, USA}
\email{hassan@wayne.edu} 

\author{Yigal Kamel}
\address{Department of Mathematics, University of Illinois at Urbana-Champaign, Urbana, IL, USA}
\email{ykamel2@illinois.edu}

\begin{document}

\vspace{0mm}

\begin{abstract}
We show that the 2-local splitting of spin$^c$ bordism by Anderson--Brown--Peterson and Stong refines to a $C_2$-equivariant map in the category of spectra with $C_2$-action from Real spin bordism to a sum of (higher) connective covers of $\ku_{\RR}$ and suspensions of mod 2 Eilenberg--Mac Lane spectra. We use this to deduce a corresponding 2-local splitting of the homotopy fixed points of Real spin bordism. We also discuss prospects that arise in the genuine setting.
\end{abstract}

\maketitle

\vspace{-5mm}

\tableofcontents

\section{Introduction}

In their seminal paper \cite{ABPspin67}, Anderson--Brown--Peterson showed that spin manifolds are determined up to spin bordism by $\KO$-characteristic numbers and Stiefel--Whitney numbers by providing a 2-local splitting of the spin bordism spectrum, $\MSpin$, in terms of (higher) connective covers of $\ko$ and suspensions of $\HZmod$ \eqref{ABP.map.r}. Stong \cite{Stong68} adapted these constructions to yield a similar splitting of the spin$^c$ bordism spectrum, $\MSpin^c$, in terms of covers of $\ku$ and $\HZmod$ \eqref{ABP.map}. The main result of this paper refines this splitting of spin$^c$ bordism to a $C_2$-equivariant splitting of the Real spin bordism spectrum, $\MSpin^c_{\RR}$, of Halladay and the second author \cite{HK24}, in the category of spectra with $C_2$-action.

\begin{theorem}\label{equiv.ABP}
There is a $C_2$-equivariant map of spectra with $C_2$-action,
\begin{equation}\label{Real.ABP.map}
F^c_{\RR} : (\MSpin^c_{\RR})^e \xrightarrow{\big(\bigvee_I f^I_{\RR} \big)\times\big(\bigvee_z f^z_{\RR}\big)} \bigvee_{I \in \mathcal{P}} \ku_{\RR} \langle 4|I| \rangle^e \vee \bigvee_{z \in Z} \Sigma^{|z|}\HZmod,
\end{equation}
whose underlying spectrum map is the 2-local splitting of $\MSpin^c$ of Anderson--Brown--Peterson \cite{ABPspin67} and Stong \cite{Stong68}. 
\end{theorem}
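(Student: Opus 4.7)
The strategy is to assemble $F^c_{\RR}$ summand-by-summand as a map in $\Fun(\BC, \Sp)$, producing each $f^I_{\RR}$ and $f^z_{\RR}$ as a $C_2$-equivariant refinement of the corresponding Stong summand. Since equivalences in $\Fun(\BC, \Sp)$ are detected on underlying spectra and the underlying map of $F^c_{\RR}$ is, by construction, the ABP/Stong 2-local splitting, the theorem follows as soon as each summand is in place.

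For the $\ku_{\RR}\langle 4|I|\rangle^e$-summands the key inputs are the Real $K$-theory orientation $\pi_{\RR} : \MSpin^c_{\RR} \to \ku_{\RR}$ -- a $C_2$-equivariant ring map whose underlying map is the classical $\ku$-orientation used by Stong -- together with the $C_2$-equivariant multiplicative structure on $\MSpin^c_{\RR}$. Stong's $f^I$ is formed from $\pi$ by pairing with a product of $\ku$-theoretic characteristic classes indexed by $I$ and then lifting along the connective cover $\ku\langle 4|I|\rangle \to \ku$. I would reproduce this equivariantly by forming the analogous product using $\pi_{\RR}$ and the $C_2$-equivariant diagonal, and then lifting along $\ku_{\RR}\langle 4|I|\rangle^e \to \ku_{\RR}^e$ in $\Fun(\BC, \Sp)$.

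The mod 2 Eilenberg--Mac Lane summands are more straightforward. I would equip $\HZmod$ with the trivial $C_2$-action. The classes $z$ appearing in Stong's splitting are polynomials in Stiefel--Whitney classes of the universal spin$^c$ bundle; these pull back from the underlying real bundle, on which complex conjugation acts trivially, so each $z$ refines canonically to a $C_2$-invariant class in the Borel mod 2 cohomology of $(\MSpin^c_{\RR})^e$, and hence to a $C_2$-equivariant map $f^z_{\RR} : (\MSpin^c_{\RR})^e \to \Sigma^{|z|}\HZmod$.

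The main obstacle is the equivariant lift along $\ku_{\RR}\langle 4|I|\rangle^e \to \ku_{\RR}^e$. Nonequivariantly, Stong produces such a lift by a connectivity argument; equivariantly, the obstructions to choosing a $C_2$-equivariant lift live in a tower of Borel cohomology groups controlled by the homotopy fixed-point spectral sequence for $\ku_{\RR}$-cohomology of $(\MSpin^c_{\RR})^e$. The cleanest approach is likely to avoid the obstruction analysis altogether by exhibiting the lift directly: build the Real characteristic classes from the $\ku_{\RR}$-Thom class of the universal Real spin$^c$ bundle using the $C_2$-equivariant ring structure, and verify that the resulting class lives in the connective cover because its underlying class does, with the $C_2$-structure forced by naturality of the Real Thom class.
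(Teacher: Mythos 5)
Your overall architecture matches the paper's: refine each summand of the Stong splitting to a map in $\SpBC$ and then use detection of equivalences on underlying objects. However, both of the genuinely difficult steps are resolved by assertions that do not hold, so there are two concrete gaps. The first concerns the lift along $\kuRangle{4|I|}^e \to \ku_{\RR}^e$. You propose that the equivariant class ``lives in the connective cover because its underlying class does,'' but in $\SpBC$ the obstruction to such a lift is the vanishing of the composite into the cofiber as an \emph{equivariant} map, i.e.\ in $\pi_0\Map_{\Sp}(A,-)^{hC_2}$ for the conjugation action; nullity of the underlying map only kills the contribution from $\H^0(C_2;-)$, and the higher group-cohomology obstructions need not vanish. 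This is not a hypothetical worry: Remark \ref{rem.noliftko} shows that for $|I|$ odd the class $\pi^I_r$ does \emph{not} lift to $\ko\langle 4|I|\rangle$, so no naturality argument can force the lift (and the $\pi^I$ are not built from the Thom class anyway --- they are complexified $\KO$-Pontrjagin classes pulled back from $\BSO$). The argument that actually works factors $\pi^I$ through $\BSO$ with trivial action and through the constant diagram $\ko\langle 4|I|-2\rangle$, and then shows that the map $\ko\langle 4|I|-2\rangle \to \kuRangle{4|I|-2}^e$ itself factors equivariantly through $\kuRangle{4|I|}^e$ because $\Map_{\SpBC}(\ko\langle 4|I|-2\rangle,\Sigma^{4|I|-2}\HZ_{\sigma})\simeq *$, a connectivity versus coconnectivity computation with the sign-twisted Eilenberg--Mac Lane spectrum. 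Some such replacement of the source by a highly connective spectrum is needed; your proposal has no substitute for it.

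The second gap is in the $\HZmod$ summands. A $C_2$-invariant class in $\H^*(\MSpin^c;\Z2)$ does not ``canonically refine'' to a class in $\H^*((\MSpinR)_{hC_2};\Z2)$: the comparison is governed by the descent spectral sequence $\H^*(C_2;\H^*(\MSpin^c;\Z2)) \Rightarrow \H^*((\MSpinR)_{hC_2};\Z2)$, and an invariant class gives an equivariant map $f^z_{\RR}$ only if it survives all differentials. Triviality of the action (which itself requires an argument, since $\H^*(\MSpin^c;\Z2)$ is not generated solely by classes pulled back from $\BSO$) merely identifies the $E_2$-page as $\H^*(\BC;\Z2)\otimes \H^*(\MSpin^c;\Z2)$. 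The paper kills the differentials on each $z$ by naturality against the corresponding spectral sequence for $\ku_{\RR}$, whose collapse is proved by a Poincar\'e series count using $\ku_{\RR}^{C_2}\simeq\ko$ and the known geometric fixed points of $\ku_{\RR}$. Without an argument of this kind, the maps $f^z_{\RR}$ have not been constructed.
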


There are three key ingredients that go into defining the $\mathrm{K}$-theory components of the classical splittings of Anderson--Brown--Peterson \eqref{ABP.map}:
\begin{enumerate}
    \item the spin and spin$^c$ orientations, $\varphi: \MSpin \to \KO$ and $\varphi^c : \MSpin^c \to \KU$, of Atiyah--Bott--Shapiro \cite{ABS};
    \item the construction of the $\KO$-valued characteristic classes, $\pi^I_r \in \KO^0(\BSO)$, in Anderson--Brown--Peterson's prior work on $\mathrm{SU}$-bordism \cite{ABPsu66}; 
    \item the determination of the filtration level of the characteristic classes $\pi^I_r$ in $\KO^0$ and their complexifications in $\KU^0$ in order to lift to appropriate higher connective covers of $\ko$ and $\ku$, respectively. 
\end{enumerate}

The remaining $\HZmod$ components of the splittings are obtained from a detailed analysis of the mod 2 cohomology of $\MSpin$ and $\MSpin^c$, respectively.  For a more detailed review of the construction, see Section \ref{sec.rev.ABP}.

\vspace{4mm}

The bulk of the proof of Theorem \ref{equiv.ABP} can be found in Section \ref{sec.equiv.ABP}. Section \ref{sec.Kequiv} consists of showing that each of the steps listed above can be carried out with Reality in the Borel equivariant setting. Incorporating Reality in step (1) is the content of the \textit{Real spin orientation} of \cite{HK24}. Adapting step (2) uses the observation that the $\KU$-characteristic classes responsible for the splitting of $\MSpin^c$ are defined as complexifications of $\KO$-characteristic classes. This step also requires a better understanding of the Real structure on $\BSpin^c$, which we address in Section \ref{sec.prelim}. Adapting step (3) involves a simple, but subtle, computation to show that the characteristic classes $\pi^I$ lift to the appropriate connective covers of $\ku_{\RR}^e \in \SpBC$, despite the fact that they do not lift to the corresponding genuine connective covers of $\ku_{\RR} \in \Sp^{C_2}$ (see Remark \ref{rem.noliftko}). Section \ref{sec.mod2equiv} is devoted to showing that each of the $\HZmod$ components of the classical splitting descend to the mod 2 Borel cohomology of $\MSpinR$.  

\vspace{4mm}

In Section \ref{sec.HFP}, we apply Theorem \ref{equiv.ABP} to obtain a corresponding 2-local splitting of the homotopy fixed points of Real spin bordism. 

\begin{theorem}[Corollary \ref{cor.homotopy.fixedpoints}]\label{homotopy.fixedpoints}
    There is a 2-local equivalence, 
    $$
    (\MSpinR)^{hC_2} \to \bigvee_{I \in \mathcal{P}}\kuRangle{4|I|}^{hC_2} \vee \bigvee_{z \in Z}( \Sigma^{|z|}\HZmod)^{hC_2}.
    $$
\end{theorem}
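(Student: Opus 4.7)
The plan is to apply the Borel homotopy fixed points functor $(-)^{hC_2}\colon \SpBC \to \Sp$ to the $C_2$-equivariant map $F^c_{\RR}$ of Theorem \ref{equiv.ABP}, and analyze the resulting map. The source becomes $(\MSpinR)^{hC_2}$, so the task reduces to identifying the target with the wedge in the statement and verifying that the induced map is a 2-local equivalence.

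To handle the target, I would first observe that the underlying connectivities $4|I|$ and $|z|$ of the summands grow without bound, so in each underlying degree only finitely many summands contribute and the canonical map from the wedge to the product is an equivalence in $\SpBC$. Since $(-)^{hC_2}$ is a limit over $\BC$ it commutes with products, giving
\[
\Big(\bigvee_I \kuRangle{4|I|}^e \vee \bigvee_z \Sigma^{|z|}\HZmod\Big)^{hC_2} \;\simeq\; \prod_I \kuRangle{4|I|}^{hC_2} \times \prod_z \big(\Sigma^{|z|}\HZmod\big)^{hC_2},
\]
which is the wedge appearing in the statement, interpreted via the evident map into the product.

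For the 2-local equivalence, I would invoke the homotopy fixed points spectral sequence. By Theorem \ref{equiv.ABP} the underlying map of $F^c_{\RR}$ is the ABP/Stong 2-local splitting of $\MSpin^c$, and since equivalences in $\SpBC$ are detected on underlying spectra, $F^c_{\RR}$ itself is a 2-local equivalence in $\SpBC$. The HFPSS converges strongly on both sides because the Borel spectra involved are bounded below; its $E_2$-page $H^s(C_2;\pi_t(-)_{(2)})$ depends only on the $C_2$-action on 2-local homotopy, so a 2-local iso on homotopy lifts to one on $E_2$, and hence on the abutment.

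The principal subtlety I expect is the identification of the target: wedge and product of homotopy fixed points can genuinely differ once the individual $(-)^{hC_2}$'s carry Tate-type contributions in unboundedly negative degrees, as $(\Sigma^{|z|}\HZmod)^{hC_2}$ does. The argument therefore must perform the wedge-to-product conversion inside $\SpBC$, where increasing underlying connectivity of the summands makes it costless, rather than after taking $(-)^{hC_2}$, where it would fail.
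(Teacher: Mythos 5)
Your proposal is correct and follows essentially the same route as the paper: convert the wedge to a product inside $\SpBC$ using the increasing connectivity of the summands (the paper's Lemma \ref{lem.sum.prod}), use that $(-)^{hC_2}$ commutes with products, and deduce the 2-local equivalence on homotopy fixed points from the underlying 2-local equivalence via the homotopy fixed point spectral sequence (the paper packages this last step as Lemmas \ref{2loc.hC2.commute} and \ref{lem.equiv.2loc}, citing Boardman for convergence). Your remark that the wedge-to-product exchange must happen before applying $(-)^{hC_2}$ is exactly the point the paper's argument is structured around.
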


This relies on the facts that a map of spectra with $C_2$-action that induces an equivalence on underlying spectra is automatically an equivalence of spectra with $C_2$-action, and that under mild hypotheses, 2-localization commutes with homotopy fixed points. Theorem \ref{homotopy.fixedpoints} then allows for a computation of the homotopy groups of $(\MSpinR)^{hC_2}$.

\begin{theorem}[Corollary \ref{cor.homotopy.groups}]\label{thm.HFP.groups} The homotopy groups of the $C_2$-homotopy fixed points of Real spin bordism are given by
$$
\pi_*(\MSpinR)^{hC_2} \cong \bigoplus_{I \in \mathcal{P}} (\pi_*\ko\langle 4 |I| \rangle \oplus \bigoplus_{m\geq1} \mathbb{Z}/2\{\delta^m_I\}) \oplus \bigoplus_{z \in Z} \H^{-*+|z|}(\BC ; \Z2), 
$$    

where $\Z2\{\delta^m_I\}$ denotes a factor of $\Z2$ generated by an element $\delta^m_I$ with degree $|\delta^m_I| = 4|I| - 4m$ when $|I|$ is even, and $|\delta^m_I| = 4|I| - 2 - 4m$ when $|I|$ is odd. 
\end{theorem}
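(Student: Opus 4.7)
The plan is to apply $\pi_*$ to the 2-local equivalence provided by Theorem \ref{homotopy.fixedpoints}, reducing the calculation to computing the homotopy groups of each wedge summand. The $\HZmod$-summands are handled by a direct function-spectrum computation, while the $\kuRangle{4|I|}^{hC_2}$-summands are handled by a $C_2$-homotopy fixed point spectral sequence.

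For the Eilenberg--Mac Lane summands, the $C_2$-action on each $\HZmod$ is trivial (per the construction in Section \ref{sec.mod2equiv}, where these summands descend from mod 2 Borel cohomology). Hence $(\Sigma^{|z|}\HZmod)^{hC_2} \simeq F(BC_{2+}, \Sigma^{|z|}\HZmod)$, whose homotopy groups are $\H^{|z|-n}(\BC; \Z2)$, matching the last term in the stated decomposition.

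For each $\kuRangle{4|I|}^{hC_2}$-summand, I would run the HFPSS
$$E_2^{s,t} = \H^s(C_2; \pi_t \ku\langle 4|I|\rangle) \Longrightarrow \pi_{t-s}\kuRangle{4|I|}^{hC_2}.$$
Complex conjugation acts on $\pi_{2k}\ku = \ZZ\{u^k\}$ by $(-1)^k$, so the $E_2$-page is supported at bidegrees $(s, 2k)$ with $k \geq 2|I|$, with $\ZZ$-entries when $s=0$ and $k$ is even, and $\Z2$-entries when either $s>0$ is even with $k$ even or $s$ is odd with $k$ odd. Differentials are the restriction of those in the full $\ku$-HFPSS, with key differential $d_3(u^{2m}) = \eta^3 u^{2m-2}$ for odd $m$, supplemented by higher differentials forced by the identification $\KU^{hC_2}\simeq \KO$. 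In topological degrees $\geq 4|I|$, the $E_\infty$-pattern matches that of the HFPSS computing $\pi_*\KO$, and assembles into $\pi_*\ko\langle 4|I|\rangle$ via the natural comparison map $\ko\langle 4|I|\rangle \to \kuRangle{4|I|}^{hC_2}$ induced by complexification. In degrees below $4|I|$, additional $\Z2$-classes survive as permanent cycles because the truncation removes the sources of the differentials that would kill them in the full HFPSS; these are the $\delta^m_I$. A direct bidegree analysis then yields the stated degrees, where the parity dependence on $|I|$ reflects whether the residual tail classes below $4|I|$ come from the trivial-action tower above $u^{2|I|}$ (for $|I|$ even) or from the sign-action tower above $u^{2|I|+1}$ (for $|I|$ odd).

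The main obstacle is the careful bookkeeping in the truncated HFPSS: verifying that each $\delta^m_I$ survives all higher differentials, that exactly one $\Z2$-summand appears per $m\geq 1$ at the stated degree, and that no hidden multiplicative extensions combine the $\delta^m_I$-classes with the $\pi_*\ko\langle 4|I|\rangle$-summand, so that the resulting decomposition of $\pi_*(\MSpinR)^{hC_2}$ is a genuine direct sum rather than a filtered extension.
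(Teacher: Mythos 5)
Your approach coincides with the paper's: reduce via the 2-local equivalence of Corollary \ref{cor.homotopy.fixedpoints} to the individual summands, compute $(\Sigma^{|z|}\HZmod)^{hC_2}$ from the trivial action as a function spectrum out of $\BC_+$, and compute $\pi_*\kuRangle{4|I|}^{hC_2}$ by truncating the Bruner--Greenlees homotopy fixed point spectral sequence for $\ku_{\RR}$, with the classes $\delta^m_I$ arising exactly as you describe, namely as classes on the truncation line that are no longer targets of differentials once their sources are removed. Your concern about extensions between the $\delta^m_I$ and $\pi_*\ko\langle 4|I|\rangle$ is easily dispatched: for $m\geq 1$ the $\delta^m_I$ live in total degrees strictly below $4|I|$, while $\pi_*\ko\langle 4|I|\rangle$ is concentrated in degrees at least $4|I|$, so the two pieces never share a degree.

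The one genuine omission is the passage from a 2-local statement to the integral isomorphism asserted in the theorem. Since $\pi_*\ko\langle 4|I|\rangle$ contains $\ZZ$-summands, the 2-local equivalence of Corollary \ref{cor.homotopy.fixedpoints} only identifies the homotopy groups after localizing at 2. The paper closes this gap by noting that $\pi_*\MSpin^c$ has no odd torsion, so the homotopy fixed point spectral sequence shows that $\pi_*(\MSpinR)^{hC_2}$ has none either; together with degreewise finite generation, an isomorphism of 2-localizations between groups with no odd torsion then upgrades to an integral isomorphism. You should add this step to your argument.
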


In Section \ref{sec.genuine}, we discuss an obstruction to refining the construction of Anderson--Brown--Peterson to the genuine setting. We introduce new $C_2$-spectra, $\kuRangle{4n,2}$, that circumvent this obstruction and potentially play a role in a genuine splitting of Real spin bordism. 

\subsection{Acknowledgments} We would like to thank Bob Bruner, 
Kiran Luecke, Fredrick Mooers, 
Andrew Salch, Brian Shin, Vesna Stojanoska, Vivasvat Vatatmaja, and Alex Waugh for very helpful conversations. We give special thanks to Zach Halladay for an enriching collaboration on Real spin bordism and for countless helpful conversations on equivariant stable homotopy theory in general; his impact is present throughout the paper.  




\section{Preliminaries}\label{sec.prelim}

In this section, we briefly recall some facts that we need about equivariant homotopy theory and Real spin bordism. 

\subsection{Elements of $C_2$-equivariant homotopy theory}

Throughout this paper, we will primarily work in the ``Borel'' $C_2$-equivariant setting; that is, functors on $\BC$.

\vspace{3mm}

Let $\mathcal{C}$ be an $\infty$-category. The $\infty$-category of \textit{$\mathcal{C}$-objects with $C_2$-action} is the functor category $\mathcal{C}^{\BC} := \Fun(\BC,\mathcal{C}).$ The \textit{underlying object} functor $U : \C^{\BC} \to \C$ is the evaluation at the single object of $\BC$. Let $A$ and $B$ be objects in $\C^{\BC}$. We say that a map $f : U(A) \to U(B) \in \C$  \textit{refines to a $C_2$-equivariant map} $\alpha : A \to B \in \C^{\BC}$ if $U(\alpha) = f$. 

\begin{proposition}\label{prop.underlying.equiv}
    A morphism $f:X \to Y$ in $\mathcal{C}^{\BC}$ is an equivalence if and only if the induced map, $U(f) : U(X) \to U(Y)$, on underlying objects is an equivalence in $\mathcal{C}$.
\end{proposition}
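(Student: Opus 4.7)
The forward direction is purely formal: $U$ is a functor of $\infty$-categories, and any functor preserves equivalences.

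For the converse, my plan is to appeal to the standard pointwise detection principle for equivalences in a functor $\infty$-category: a morphism $\alpha$ in $\Fun(\mathcal{D}, \mathcal{C})$ is an equivalence if and only if each component $\alpha_d$ is an equivalence in $\mathcal{C}$. Applied to $\mathcal{D} = \BC$, which has a single object, the pointwise condition collapses to the single statement that $U(f)$ is an equivalence, which is the hypothesis. One clean way to repackage the same fact is to note that the essentially surjective functor $\mathrm{pt} \to \BC$ induces a conservative restriction functor $\Fun(\BC,\mathcal{C}) \to \Fun(\mathrm{pt},\mathcal{C}) \simeq \mathcal{C}$, and that this restriction functor is precisely $U$; conservativity of restriction along essentially surjective functors then gives the claim immediately.

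The only mildly non-trivial input is the pointwise detection principle itself, but this is standard and can be cited from, e.g., Lurie's \emph{Higher Topos Theory}. If a self-contained argument is desired, one passes to homotopy categories: invertibility of a morphism in any $\infty$-category is detected by its image in the homotopy category, and a natural transformation is invertible in $h\Fun(\BC,\mathcal{C})$ precisely when each of its components is invertible in $h\mathcal{C}$. I do not anticipate a genuine obstacle here---no explicit construction of an equivariant inverse is needed, since the inverse to $U(f)$ in $\mathcal{C}$ automatically lifts through the detection principle, making the proof essentially a one-line appeal to a foundational fact about functor $\infty$-categories.
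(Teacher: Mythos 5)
Your argument is correct and is essentially identical to the paper's proof, which likewise reduces the claim to the standard fact that equivalences in a functor $\infty$-category are detected pointwise (the paper cites Corollary 3.5.12 of Cisinski's book rather than Lurie). No further comment is needed.
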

\begin{proof}
    In any functor $\infty$-category, $\Fun(\mathcal{C},\mathcal{D})$, a morphism is invertible if and only if all of its components are invertible as morphism in $\mathcal{D}$ (see Corollary 3.5.12 of \cite{Cisinski_19}).
\end{proof}

Thus, if an equivalence in $\mathcal{C}$ refines to a $C_2$-equivariant map, then it refines to an equivalence in $\mathcal{C}^{\BC}$. The \textit{homotopy fixed points} of an object with $C_2$-action, $X \in \C^{\BC}$, is the limit, $$X^{hC_2} := \lim_{\BC}X \in \C.$$  The homotopy fixed point functor is right adjoint to the constant diagram functor $\iota : \C \to \C^{\BC}$, 
\begin{equation}\label{HFP.adj}
\Map_{\C^{\BC}}(\iota X, Y) \simeq \Map_{\C}(X, Y^{hC_2}).
\end{equation}
Similarly, the \textit{homotopy orbits}, $(\:\:)_{hC_2} : \mathcal{C}^{\BC} \to \mathcal{C}$ is left adjoint to $\iota$,
$$
\Map_{\C}(X_{hC_2}, Y) \simeq \Map_{\C^{\BC}}(X, \iota Y),
$$
and is given by the colimit over $\BC$. We will often denote $\iota X$ by $X$. In particular, we are most interested in the cases when $\mathcal{C} = \mathcal{S}$, the $\infty$-category of spaces, and $\mathcal{C} = \Sp$, the $\infty$-category of spectra. In the case of spectra, we will also have occasion to consider the $\infty$-category, $\Sp^{C_2}$, of genuine $C_2$-spectra, which (for concreteness) we take to be the $\infty$-categorical localization of the model category of orthogonal $C_2$-spectra indexed by a complete $C_2$-universe (e.g. \cite{HHRbook}). In this setting, there is a \textit{(genuine) $C_2$-fixed points} functor  $(\:\:)^{C_2} : \Sp^{C_2} \to \Sp$ which also has a left adjoint $\mathrm{infl} : \Sp \to \Sp^{C_2}$, $$\Map_{\Sp^{C_2}}(\mathrm{infl} X, Y) \simeq \Map_{\Sp}(X, Y^{C_2}),$$ such that $(\mathrm{infl}X)^e \simeq \iota X$, where $(\:\:)^e : \Sp^{C_2} \to \SpBC$ is the \textit{underlying spectrum with $C_2$-action} functor. There is a corresponding induced map from fixed points to homotopy fixed points, $$X^{C_2} \to (X^{e})^{hC_2} =: X^{hC_2},$$
so that given a $C_2$-spectrum, $Y$, a map of spectra $X \to Y^{C_2}$ induces an equivariant map, $X \to Y^e$, in $\SpBC$. 

\vspace{3mm}

There is another type of fixed points functor $(\:\:)^{gC_2} : \Sp^{C_2} \to \Sp$, called the \textit{geometric fixed points}, that we will use. The main feature of this functor that we need is its role in the Tate diagram,  
$$
    \begin{tikzcd}[column sep = 4mm, row sep = 9mm]
        X_{hC_2} \arrow[rrr] \arrow[d,"=\:"'] &&& X^{C_2}  \arrow[rrr] \arrow[d]  \arrow[drrr, phantom, "\ulcorner", near end] &&& X^{gC_2} \arrow[d] \\
        X_{hC_2} \arrow[rrr] &&& X^{hC_2}  \arrow[rrr] &&& X^{tC_2},
    \end{tikzcd}
$$
where the rows are cofiber sequences and the square on the right is cartesian.

\subsection{Real bundle theory}\label{sec.equiv.bundle}

Given a $C_2$-action on a group $G$, there is an associated equivariant bundle theory, as developed in \cite{LashofMay86_EquivBundle} and \cite{GuillouMayMerling17_equivBundleCat}. The central objects of study there are called principal $(G,G \rtimes C_2)$-bundles, which we call Real principal $G$-bundles instead. The goal of this section is to identify a convenient model for the underlying space with $C_2$-action of the classifying space for Real $G$-bundles (Proposition \ref{equiv.classifyingspace.action}), which will be applied in Section \ref{sec.RealSpinBordism} to $G = \Spin^c(n)$ with its complex conjugation action.

\begin{definition}\label{def.RealGbund}
    A \textit{Real principal $G$-bundle} is a (topological) principal $G$-bundle, $E \to B$, together with $C_2$-actions on $E$ and $B$, such that the $G$-action, $G \times E \to E$, on $E$ and the bundle map, $E \to B$, are both $C_2$-equivariant.
\end{definition}

\begin{remark}
    Equivalently, a Real principal $G$-bundle is a principal $G$-bundle together with an extension of the $G$-action on the total space to $G \rtimes C_2$. Then the $C_2$-action on the base is defined as the induced action on $E/G \cong B$.
\end{remark}

\begin{definition}
    A Real principal $G$-bundle, $p: E \to B$, is \textit{universal}, if pulling back $p$ induces a natural bijection,
    $$
    [X,B]_{C_2} \cong \{\text{Real principal $G$-bundles on } X \} /\text{isomorphism}.
    $$
\end{definition}

\begin{proposition}[\cite{May_EquivariantBook}]
    There exists a universal Real principal $G$-bundle, $$\E(G;C_2) \to \B(G;C_2).$$    
\end{proposition}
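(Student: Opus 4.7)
The plan is to identify, via the remark following Definition \ref{def.RealGbund}, a Real principal $G$-bundle $P \to B$ with a $(G \rtimes C_2)$-space $P$ on which $G$ acts freely. Under this identification, the classifying space problem becomes the construction of a universal such object: a $(G \rtimes C_2)$-space $\E(G; C_2)$ with free $G$-action whose $H$-fixed points $\E(G; C_2)^H$ are contractible for every subgroup $H \leq G \rtimes C_2$ with $H \cap G = \{e\}$. These are exactly the isotropy subgroups that can occur in a Real $G$-bundle, since freeness of the $G$-action forces every stabilizer to project injectively to $C_2$.

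To construct such a space, I would apply the equivariant bar construction to $G \rtimes C_2$ relative to the family $\mathcal{F}$ of subgroups complementary to $G$, or equivalently realize it as a Stiefel manifold of $G$-frames inside a complete real $C_2$-universe equipped with a compatible Real $G$-action. Either procedure yields a $(G \rtimes C_2)$-space with the required fixed-point behavior, and I would then set $\B(G; C_2) := \E(G; C_2)/G$ with its induced $C_2$-action, so that $\E(G; C_2) \to \B(G; C_2)$ is visibly a Real principal $G$-bundle.

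The universal property is verified via equivariant obstruction theory: given a Real principal $G$-bundle $P \to X$, the isotropy of $P$ as a $(G \rtimes C_2)$-space lies in $\mathcal{F}$, so the hypothesis on fixed points of $\E(G; C_2)$ ensures the existence of a $(G \rtimes C_2)$-equivariant map $P \to \E(G; C_2)$, unique up to equivariant homotopy. Descending to $G$-quotients yields the desired $C_2$-equivariant classifying map $X \to \B(G; C_2)$, and naturality together with uniqueness of the lift shows that pullback induces a bijection with $C_2$-equivariant homotopy classes.

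The main obstacle is the construction and correct characterization of the fixed-point behavior of $\E(G; C_2)$: unlike the universal principal $(G \rtimes C_2)$-bundle $\E(G \rtimes C_2)$, this space is not required to have a free $(G \rtimes C_2)$-action, but must still have contractible fixed points along the prescribed family $\mathcal{F}$. The full verification of the classifying property via an equivariant covering homotopy argument is carried out in \cite{May_EquivariantBook} and systematized in the framework of Lashof--May \cite{LashofMay86_EquivBundle}.
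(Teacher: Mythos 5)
Your proposal is correct and follows exactly the standard Lashof--May route that the paper itself defers to: the paper offers no proof of this proposition, simply citing \cite{May_EquivariantBook}, and your reduction to a $(G \rtimes C_2)$-space with contractible $\Lambda$-fixed points for all closed $\Lambda$ with $\Lambda \cap G = \{e\}$, built from the bar construction relative to that family, is precisely the argument in the cited sources. Nothing further is needed.
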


\begin{definition} Let $G$ be a compact Lie group.
\begin{itemize}
    \item Let $\mathcal{E}G$ denote the topological category with object space $G$ and with a unique (iso)morphism between every pair of objects, where the morphism space is topologized as the product  $G \times G$. For example, 
    $$
    \mathcal{E}C_2 = \begin{tikzcd}
        e \arrow[r,bend left = 33] \arrow[from=1-1, to=1-1, loop, in=215, out=145, distance=8mm] & c \arrow[l, bend left = 33] \arrow[from=1-2, to=1-2, loop, in=35, out=325, distance=8mm]
    \end{tikzcd}.
    $$
    \item Let $\mathcal{B}G$ denote the topological category with one object and with morphism space equal to $G$.
    \end{itemize}
\end{definition}

There is a continuous functor $\mathcal{E}G \to \mathcal{B}G$ given on morphisms by the map $G \times G \to G$ defined by $(g, h) \mapsto gh^{-1}$. 

\begin{proposition}
    The map $|\mathcal{E}G| \to |\mathcal{B}G|$ is a universal principal $G$-bundle. 
\end{proposition}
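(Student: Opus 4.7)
The plan is to reduce the claim to two standard facts: (a) the map $|\mathcal{E}G| \to |\mathcal{B}G|$ is a numerable principal $G$-bundle, and (b) the total space $|\mathcal{E}G|$ is weakly contractible. Milnor's theorem then yields universality in the sense of classifying principal $G$-bundles on paracompact spaces, which is the content of the proposition.

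For (a), I would first exhibit the $G$-action. Right multiplication of $G$ on itself defines a free $G$-action on the object space of $\mathcal{E}G$, extended diagonally to the morphism space $G \times G$. The formula $(g,h) \mapsto gh^{-1}$ is invariant under this action, and passing to orbits yields an isomorphism $\mathcal{E}G/G \cong \mathcal{B}G$ of topological categories (the object quotient $G/G$ is a point, and on morphisms the multiplication map $G \times G \to G$ realizes this quotient set-theoretically). Since geometric realization is a left adjoint, the induced $G$-action on $|\mathcal{E}G|$ is free with quotient $|\mathcal{B}G|$. Local triviality is the substantive point, and I would appeal to the standard bar-construction machinery of Segal and May: at each simplicial level the nerve $N_n \mathcal{E}G \cong G^{n+1}$ is a trivial principal $G$-bundle over $N_n \mathcal{B}G \cong G^n$, and because $G$ is a compact Lie group these simplicial spaces are proper in the sense of May, so the realization inherits the structure of a numerable principal $G$-bundle.

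For (b), the key observation is that $\mathcal{E}G$ is an ``indiscrete'' topological groupoid: between any two objects there is a unique isomorphism. Fix the basepoint $e \in G$ and consider the constant functor $c_e : \mathcal{E}G \to \mathcal{E}G$. There is a natural transformation $\eta : \id_{\mathcal{E}G} \Rightarrow c_e$ whose component at an object $g$ is the unique morphism $g \to e$; under the parameterization of the morphism space as $G \times G$ this component is just $(e,g)$, so the assignment $g \mapsto \eta_g$ is the continuous map $G \to G \times G$, $g \mapsto (e,g)$. Naturality is immediate from uniqueness of morphisms in $\mathcal{E}G$, and on geometric realizations $\eta$ produces a homotopy from the identity of $|\mathcal{E}G|$ to the constant map at $|e|$, witnessing (strong) contractibility.

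The main obstacle is the technical bookkeeping in (a) --- verifying local triviality and numerability of the realization map rigorously --- but once the standard bar-construction results are invoked, everything else is formal. Combining (a) and (b) with Milnor's universal-bundle criterion yields the proposition.
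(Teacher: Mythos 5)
Your proof is correct and follows essentially the same route as the paper: both arguments rest on exhibiting $|\mathcal{E}G|$ as a contractible free $G$-space whose quotient is $|\mathcal{B}G|$, with universality then following from the standard criterion. The only difference is one of detail --- you supply the bar-construction/properness argument for numerability and the natural-transformation contraction explicitly, whereas the paper asserts contractibility and delegates the quotient identification to the cited references.
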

\begin{proof}
    It is immediately clear that $|\mathcal{E}G|$ is contractible with free $G$-action, and that on categories, $\mathcal{E}G / G \cong \mathcal{B}G$. While geometric realization fails to commute with taking quotients in general, in this particular situation, there is a canonical identification $|\mathcal{E}G|/G \cong |\mathcal{B}G| \simeq \B\! G$, as discussed in \cite{GuillouMayMerling17_equivBundleCat} and \cite{nlabCatBundles}. 
\end{proof}

When $G$ has a $C_2$-action, then there is an induced $C_2$-action on $\mathcal{B}G$ by acting on the morphisms in the same way that $C_2$ acts on $G$, and a $C_2$-action on $\mathcal{E}G$ by acting on objects as $C_2$ acts on $G$ (which determines the action on morphisms). One nice feature of this $C_2$-action on $|\mathcal{B}G| \simeq \B\!G$ is that its underlying object of $\mathcal{S}^{\BC}$ is evidently given by the composite $$\BC \xrightarrow{G} \mathrm{Groups} \xrightarrow{\B} \mathcal{S}.$$

\begin{proposition}
    The map $|\mathcal{E}G| \to |\mathcal{B}G|$ is a Real principal $G$-bundle. 
\end{proposition}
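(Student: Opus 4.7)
The plan is to check the three conditions of Definition \ref{def.RealGbund} directly from the constructions, using that the $C_2$-action on $G$ by group automorphisms makes all of the relevant structure maps $C_2$-equivariant at the level of topological categories, and then pass to geometric realizations.

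First, I would verify that the assignments described in the paragraph before the statement genuinely define continuous $C_2$-actions on the topological categories $\mathcal{E}G$ and $\mathcal{B}G$. Writing $c : G \to G$ for the generator of the $C_2$-action on $G$ (a continuous group automorphism), the action on $\mathcal{B}G$ is the functor sending the unique object to itself and a morphism $g$ to $c(g)$; this is a functor because $c$ preserves composition and the identity. The action on $\mathcal{E}G$ sends an object $g$ to $c(g)$ and a morphism $(g,h) : g \to h$ to $(c(g), c(h)) : c(g) \to c(h)$; again functoriality is immediate. By the previous proposition, $|\mathcal{E}G| \to |\mathcal{B}G|$ is already a principal $G$-bundle, so only the equivariance statements remain.

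Next, I would check that the functor $\mathcal{E}G \to \mathcal{B}G$ defined by $(g,h) \mapsto gh^{-1}$ is $C_2$-equivariant. The square to check commutes on morphisms reduces to the identity $c(gh^{-1}) = c(g)c(h)^{-1}$, which holds because $c$ is a group homomorphism. For the $G$-action equivariance, recall that $G$ acts on $\mathcal{E}G$ on the right by translating objects: $g \cdot k$ sends the object $g$ to $gk$ and the morphism $(g,h)$ to $(gk, hk)$. We need $c(g \cdot k) = c(g) \cdot c(k)$, which is again just $c(gk) = c(g)c(k)$. Since both the action and the bundle map are maps of topological categories that strictly commute with the $C_2$-action functors, taking geometric realization produces $C_2$-equivariant continuous maps
\[
G \times |\mathcal{E}G| \longrightarrow |\mathcal{E}G| \longrightarrow |\mathcal{B}G|,
\]
where $C_2$ acts diagonally on $G \times |\mathcal{E}G|$ via $c$ on the first factor.

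Most of the verification is formal, so the main (mild) obstacle is just bookkeeping: one must be careful that the conventions used to define the $G$-action on $|\mathcal{E}G|$, the identification $|\mathcal{E}G|/G \cong |\mathcal{B}G|$, and the $C_2$-action on each of $\mathcal{E}G$ and $\mathcal{B}G$ are all compatible, i.e.\ that the induced $C_2$-action on $|\mathcal{E}G|/G$ agrees with the $C_2$-action on $|\mathcal{B}G|$ coming from $c$. This follows from naturality of the quotient identification in \cite{GuillouMayMerling17_equivBundleCat}, since the $C_2$-action is induced from an automorphism of the pair $(\mathcal{E}G, G)$. Once this is in place, Definition \ref{def.RealGbund} is satisfied and the proposition follows.
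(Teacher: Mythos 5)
Your proof is correct and follows the same line as the paper, whose entire argument is the one-sentence observation that $C_2$ acts on $G$ by group homomorphisms; you have simply unpacked that observation into the explicit checks ($c(gh^{-1})=c(g)c(h)^{-1}$, $c(gk)=c(g)c(k)$, and compatibility with realization and the quotient identification) that make it work.
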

\begin{proof}
    This follows from the fact that $C_2$ acts on $G$ by group homomorphisms. 
\end{proof}

By the definition of $\B(G;C_2)$, the Real $G$-bundle $|\mathcal{E}G| \to |\mathcal{B}G|$ is classified by a $C_2$-equivariant map $$f_G : |\mathcal{B}G| \to \B(G;C_2).$$  

\begin{proposition}\label{equiv.classifyingspace.action}
    The map $f_G : |\mathcal{B}G| \to \B(G;C_2)$ is an equivalence in $\mathcal{S}^{\BC}$.
\end{proposition}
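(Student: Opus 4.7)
The plan is to invoke Proposition \ref{prop.underlying.equiv}, which reduces the claim to checking that the underlying map of spaces, $U(f_G) : |\mathcal{B}G| \to \B(G;C_2)$, is a weak equivalence in $\mathcal{S}$. After this reduction, the $C_2$-equivariance of the construction plays no further role, and we only need to verify a statement about ordinary classifying spaces.

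For this, I would identify both the source and target with models of the (non-equivariant) classifying space $BG$. The source is $BG$ by construction, since $|\mathcal{E}G| \to |\mathcal{B}G|$ is a model for the universal principal $G$-bundle. For the target, note that $\E(G;C_2) \to \B(G;C_2)$ is in particular a principal $G$-bundle, and by the Lashof--May criterion for universality of Real principal $G$-bundles (applied to the closed subgroup $\Lambda = \{e\} \subset G \rtimes C_2$, which trivially satisfies $\Lambda \cap G = e$), its total space $\E(G;C_2)$ must be contractible. Thus $\E(G;C_2) \to \B(G;C_2)$ is also a model for the universal principal $G$-bundle, and $U(\B(G;C_2)) \simeq BG$.

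Finally, since $f_G$ classifies the Real principal $G$-bundle $|\mathcal{E}G| \to |\mathcal{B}G|$, it is covered by a $G$-equivariant bundle map $|\mathcal{E}G| \to \E(G;C_2)$ (forgetting $C_2$-equivariance). Comparing the long exact sequences of homotopy groups of the two fiber sequences $G \to |\mathcal{E}G| \to |\mathcal{B}G|$ and $G \to \E(G;C_2) \to \B(G;C_2)$, the five-lemma applied to the induced map of LES's, together with the fact that both total spaces are contractible and the map on fibers is the identity on $G$, shows that $U(f_G)$ is a weak equivalence. The main subtlety is the appeal to the Lashof--May criterion to recognize the underlying space of $\B(G;C_2)$ as $BG$; this is precisely the content of the commented-out proposition in the excerpt and is the one non-formal input to the argument.
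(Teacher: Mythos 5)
Your proposal is correct and follows essentially the same route as the paper: reduce to the underlying map via Proposition \ref{prop.underlying.equiv}, then observe that nonequivariantly $f_G$ is a map between models of $\mathrm{B}G$ classifying universal bundles, hence a weak equivalence. You spell out a step the paper leaves implicit (that $U(\E(G;C_2))$ is contractible, via the Lashof--May criterion, so that the target really is a $\mathrm{B}G$), which is a legitimate and slightly more careful version of the same argument.
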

\begin{proof}
    Nonequivariantly, $|\mathcal{E}G| \to |\mathcal{B}G|$ is a universal $G$-bundle, so $f_G$ is an equivalence on underlying spaces. Since $f_G$ is $G$-equivariant, by Proposition \ref{prop.underlying.equiv}, $f_G$ is an equivalence in $\mathcal{S}^{\BC}$.
\end{proof}

Thus, the underlying $C_2$-action on the classifying space, $\B(G;C_2)$, can be described by applying the functor $\B$ to the $C_2$-action on $G$.

\subsection{Real spin bordism}\label{sec.RealSpinBordism} In this section, we briefly recall the Real spin bordism spectrum of \cite{HK24}, and we reformulate some of its properties in a way that is more convenient for our purposes in this paper. First, we recall the main result of \cite{HK24}.

\begin{theorem}[Halladay--Kamel \cite{HK24}]\label{Real.orientation}
    There is a genuine $C_2$-ring spectrum $\MSpin^c_{\RR}$, called \textit{Real spin bordism}, and a ring map $\varphi^c_{\RR} : \MSpinR \to \KU_{\RR}$, whose underlying map is the spin$^c$ orientation, $\varphi^c : \MSpin^c \to \KU$. Furthermore, there exists a natural ring map $\MSpin \to (\MSpinR)^{C_2}$. 
\end{theorem}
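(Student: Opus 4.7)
The plan has three parts, corresponding to the three claims in the theorem. For the construction of $\MSpinR$, I would apply a Real Thom spectrum construction to a universal Real $\Spin^c$-bundle. The Lie group $\Spin^c(n) = (\Spin(n) \times \U(1))/\langle (-1,-1) \rangle$ carries a canonical $C_2$-action by complex conjugation on the $\U(1)$-factor, which descends to the quotient because $(-1,-1)$ is fixed and the conjugation is by group automorphisms. By Proposition \ref{equiv.classifyingspace.action}, this promotes $\B\Spin^c(n)$ to an object of $\mathcal{S}^{\BC}$, and the associated universal Real $\Spin^c(n)$-bundle of Section \ref{sec.equiv.bundle} provides the bundle to Thom-ify. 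To obtain a genuine $C_2$-spectrum I would index the resulting prespectrum on a complete $C_2$-universe, using the Real representations that $\Spin^c(n)$ inherits from its $\CC$-linear representations. The block-sum homomorphisms $\Spin^c(m) \times \Spin^c(n) \to \Spin^c(m+n)$ are $C_2$-equivariant, so $\MSpinR$ naturally acquires a $C_2$-$\mathbb{E}_\infty$-ring structure.

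For the Real orientation $\varphi^c_\RR$, I would equivariantize Atiyah--Bott--Shapiro. The classical map $\MSpin^c \to \KU$ is built from the universal $\Spin^c(n)$-equivariant modules over the complex Clifford algebra $\CCl_n$, which represent complex K-theory classes on the relevant Thom spaces. These Clifford modules carry a canonical antilinear conjugation that intertwines with the $C_2$-action on $\Spin^c(n)$ — equivalently, they extend to modules for the Real group $\Spin^c(n) \rtimes C_2$ — and therefore, by Atiyah's classification, represent classes in $\KU_\RR = \KR$. Assembling these Thom classes compatibly with the Real representations used to stabilize $\MSpinR$ produces the desired ring map $\varphi^c_\RR : \MSpinR \to \KU_\RR$, and its underlying non-equivariant map is $\varphi^c$ by construction. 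Finally, the $C_2$-fixed subgroup of $\Spin^c(n)$ under complex conjugation coincides with the image of the canonical inclusion $\Spin(n) \hookrightarrow \Spin^c(n)$: the equation $[(g,z)] = [(g, \bar z)]$ forces either $z = \bar z$ (so $z = \pm 1$, and $[(g,-1)] = [(-g,1)]$) or $g = -g$, which is impossible. By naturality of the Thom construction and the genuine fixed-point functor, this subgroup inclusion induces the required ring map $\MSpin \to (\MSpinR)^{C_2}$.

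I expect the main obstacle to be the second step: producing $\varphi^c_\RR$ as a morphism in $\Sp^{C_2}$, rather than merely in $\SpBC$, requires careful coordination of the $C_2$-representations used to stabilize $\MSpinR$ and $\KU_\RR$, together with a verification that the ABS Clifford-module Thom classes lift on the nose to equivariant Thom classes in a manner compatible with Bott periodicity for $\KR$. Once that construction is in hand, the remaining claims of the theorem follow from formal properties of Thom spectra, naturality of fixed points, and the fact that the underlying non-equivariant data recover the classical ABS orientation and the standard map $\Spin \to \Spin^c$.
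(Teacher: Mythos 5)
A preliminary caveat: Theorem \ref{Real.orientation} is imported from \cite{HK24} and this paper gives no proof of it---Section \ref{sec.RealSpinBordism} only recalls the statement together with two properties of the construction---so there is no in-paper argument to compare yours against. Judged on its own, your outline is a reasonable reconstruction of the strategy the cited construction follows: conjugation on the central $\U(1)$ of $\Spin^c(n)$, a Thom spectrum of the universal Real $\Spin^c(n)$-bundle indexed on a complete $C_2$-universe, an Atiyah--Bott--Shapiro orientation built from complex Clifford modules with a compatible Real structure landing in $\KR$, and the identification $\Spin^c_{\RR}(n)^{C_2} = \Spin(n)$ (your computation of which is correct). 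You have also correctly located the genuinely hard step, namely producing $\varphi^c_{\RR}$ in $\Sp^{C_2}$ compatibly with $(1,1)$-periodicity of $\KU_{\RR}$.

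Two caveats. First, the paper indicates that \cite{HK24} does not use the bar-construction model you propose but an operator-theoretic model $\E_{\mathrm{J}}$ adapted from Joachim, precisely because the genuine multiplicative structure and the Clifford-module Thom classes are most naturally encoded there; the bar-construction comparison of Section \ref{sec.equiv.bundle} only controls the Borel homotopy type, which suffices for the present paper but not for building the genuine $C_2$-ring spectrum and its orientation. Second, your derivation of the map $\MSpin \to (\MSpinR)^{C_2}$ from the fixed-subgroup computation alone is too quick: genuine fixed points of an equivariant Thom spectrum are not simply the Thom spectrum of the fixed-point bundle. One needs, in addition, that the fixed points of the total space of the universal Real bundle are contractible (this is exactly property (2) listed in Section \ref{sec.RealSpinBordism}), so that $\E_{\mathrm{J}}^{C_2}$ with its residual free $\Spin(n)$-action models $\ESpin(n)$, together with an identification of the fixed points of the representation spheres used to stabilize. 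So your proposal is a correct skeleton, with the technically substantive verifications deferred to precisely the places where the cited construction does its real work.
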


In this paper, we are primarily interested in the underlying spectrum with $C_2$-action, $(\MSpinR)^e \in \SpBC$ of Real spin bordism. We now review the relevant actions in this context. Let $\Spin^c_{\RR}(n)$ denote the group with $C_2$-action defined by $$\displaystyle \Spin^c_{\RR}(n) = \Spin(n) \underset{\{\pm 1\}}{\times} \U_{\RR}(1),$$ where $\Spin(n)$ is given the trivial $C_2$-action, and $\U_{\RR}(1)$ is the group $\U(1)$ equipped with the $C_2$-action given by complex conjugation. In particular, we have equivariant short exact sequences,

\vspace{-5mm}

\begin{equation}\label{equiv.ses}
1 \to \U_{\RR}(1) \to \Spin^c_{\RR}(n) \to \SO(n) \to 1,
\end{equation}

\vspace{-8mm}

\begin{equation}\label{equiv.ses.2}
1 \to \Spin(n) \to \Spin^c_{\RR}(n) \to \U_{\RR}(1) \to 1,
\end{equation}

where $\SO(n)$ is given the trivial action. The construction of the Real spin bordism spectrum in \cite{HK24} uses a particular topological model for the total space of the universal $\Spin^c(n)$-bundle, $\E_{\mathrm{J}} \simeq \ESpin^c(n)$ (adapted from \cite{Joachim} and denoted $\displaystyle \mathrm{U}^{\mathrm{even}}_{\RR^n}$ in \cite{HK24}), equipped with a $C_2$-action that satisfies the following properties.

\vspace{2mm}

\begin{enumerate}[itemsep=1mm]
    \item The $\Spin^c(n)$-action induces a $C_2$-equivariant map $\Spin^c_{\RR}(n) \times \E_{\mathrm{J}} \to \E_{\mathrm{J}}$. 
    \item The $C_2$-fixed point space of $\E_{\mathrm{J}}$ is contractible, $\E_{\mathrm{J}}^{C_2} \simeq *$. 
\end{enumerate}

\vspace{2mm}

Property (1) is then used to define a $C_2$-space  $\B_{\mathrm{J}} \simeq \BSpin^c(n)$ as the quotient $\B_{\mathrm{J}} = \E_{\mathrm{J}}/\Spin^c(n)$, which then implies that the quotient map $\E_{\mathrm{J}} \to \B_{\mathrm{J}}$ is a Real $\Spin^c_{\RR}(n)$-bundle, in the sense of Definition \ref{def.RealGbund}. However, an explicit description of the $C_2$-action on $\B_{\mathrm{J}}$ is not given in \cite{HK24}. In this paper, we are interested in this $C_2$-action, but we are not interested in the specific model given in \cite{HK24}. We present here a different way to construct the relevant $C_2$-action on $\BSpin^c(n)$ that is clearer for our purposes. For this, we use the ideas of Section \ref{sec.equiv.bundle}.

\vspace{3mm}

Let $\ESpin^c_{\RR}(n) \to \BSpin^c_{\RR}(n)$ denote the universal Real principal $\Spin^c_{\RR}(n)$-bundle. Since $\E_{\mathrm{J}} \to \B_{\mathrm{J}}$ is a Real $\Spin^c_{\RR}(n)$-bundle, it is determined by a $C_2$-equivariant map, 
$$
   f_{\mathrm{J}}: \B_{\mathrm{J}} \to \BSpin^c_{\RR}(n). 
$$

\begin{proposition}\label{BHK.equiv}
    The map $f_{\mathrm{J}}:\B_{\mathrm{J}} \to \BSpin^c_{\RR}(n)$ is an equivalence in $\mathcal{S}^{\BC}$.
\end{proposition}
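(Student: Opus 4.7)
The plan is to invoke Proposition \ref{prop.underlying.equiv}, which reduces showing an equivalence in $\mathcal{S}^{\BC}$ to showing an equivalence on underlying spaces. Since $f_{\mathrm{J}}$ is already $C_2$-equivariant by construction (as the classifying map of the Real $\Spin^c_{\RR}(n)$-bundle $\E_{\mathrm{J}} \to \B_{\mathrm{J}}$), it suffices to verify that the underlying map, $U(f_{\mathrm{J}}) : \B_{\mathrm{J}} \to \BSpin^c_{\RR}(n)$, is an equivalence in $\mathcal{S}$.

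To do this, I would first observe that, just as in the proof of Proposition \ref{equiv.classifyingspace.action}, forgetting the $C_2$-action on a Real principal $\Spin^c_{\RR}(n)$-bundle yields an ordinary principal $\Spin^c(n)$-bundle, so the underlying space of $\BSpin^c_{\RR}(n)$ is a model for $\BSpin^c(n)$, and $U(f_{\mathrm{J}})$ classifies the underlying nonequivariant $\Spin^c(n)$-bundle of $\E_{\mathrm{J}} \to \B_{\mathrm{J}}$. By the properties of the Joachim-type model recalled above, $\E_{\mathrm{J}} \simeq \ESpin^c(n)$ is contractible with free $\Spin^c(n)$-action, and $\B_{\mathrm{J}} = \E_{\mathrm{J}}/\Spin^c(n)$, so this underlying bundle is a universal principal $\Spin^c(n)$-bundle. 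Any two universal $\Spin^c(n)$-bundles have equivalent base spaces via their mutual classifying maps, so $U(f_{\mathrm{J}})$ is an equivalence in $\mathcal{S}$.

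The argument is then concluded by Proposition \ref{prop.underlying.equiv}. The only potential subtlety is the reliance on the fact that the underlying space of the Real classifying space $\BSpin^c_{\RR}(n)$ agrees with an ordinary classifying space $\BSpin^c(n)$; this is immediate from the definition of a Real principal bundle as a principal bundle equipped with compatible $C_2$-actions, so that the forgetful functor from Real $\Spin^c_{\RR}(n)$-bundles to $\Spin^c(n)$-bundles sends a universal object to a universal object at the level of underlying spaces. Because everything here is a formal consequence of the setup in Section \ref{sec.equiv.bundle} combined with the two stated properties of $\E_{\mathrm{J}}$ from \cite{HK24}, there is no substantive obstacle to overcome.
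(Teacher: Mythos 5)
Your proposal is correct and follows essentially the same route as the paper: forget the $C_2$-action, observe that $\E_{\mathrm{J}} \to \B_{\mathrm{J}}$ is then a universal $\Spin^c(n)$-bundle classified by $f_{\mathrm{J}}$ (so $U(f_{\mathrm{J}})$ is an equivalence of underlying spaces), and conclude via Proposition \ref{prop.underlying.equiv}. The extra detail you supply about why the underlying bundle is universal is a harmless elaboration of the same argument.
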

\begin{proof}
    Forgetting the $C_2$-actions, $\E_{\mathrm{J}}\to \B_{\mathrm{J}}$ is a universal $\Spin^c(n)$-bundle which is classified by the map $f_{\mathrm{J}}$. Thus, $f_{\mathrm{J}}$ must be an equivalence of underlying spaces. By Proposition \ref{prop.underlying.equiv}, it is an equivalence in $\mathcal{S}^{\BC}$. 
\end{proof}

Putting together Propositions \ref{equiv.classifyingspace.action} and \ref{BHK.equiv}, we see that in $\mathcal{S}^{\BC}$, we have equivalences
$$
\B_{\mathrm{J}} \simeq \BSpin^c_{\RR}(n) \simeq |\mathcal{B}\Spin^c(n)|. 
$$
Throughout the paper, we will use the notation $\BSpin^c_{\RR}(n) \in \mathcal{S}^{\BC}$ for this object (and $\BSpin^c_{\RR} \in \mathcal{S}^{\BC}$ for the colimit over $n$), and freely use the fact that it can be obtained either from the constructions of \cite{HK24} or as the composite $$\BC \xrightarrow{\Spin^c_{\RR}(n)} \mathrm{Groups} \xrightarrow{\B} \mathcal{S}.$$

\section{Review of the Anderson--Brown--Peterson splittings}\label{sec.rev.ABP}

First, we briefly recall the construction of the spin${}^c$ version of the Anderson--Brown--Peterson map \cite{ABPspin67} (see \cite{Stong68} or \cite{BuchananMckean_KSpMSpinh} for details).  

\vspace{3mm}

Let $\pi^i_r : \BSO \to \KO$ denote the $i$-th $\KO$-Pontrjagin class, as defined in \cite{ABPsu66}, and let $\pi^i \in \KU^0(\BSO)$ denote its complexification,
$$
\pi^i : \BSO \xrightarrow{\pi^i_r} \KO \xrightarrow{\otimes \CC} \KU,
$$
(where $\otimes \CC$ denotes the inclusion of fixed points), as well as it's pullback to $\BSpin^c$, $\pi^i \in \KU^0(\BSpin^c)$. Let  $\mathcal{P} = \{(i_1,...,i_k) \st k\geq 0, i_l \in \ZZ_{\geq 1})\}$ be the set of integer partitions, including the empty partition $(\:\:)$ of 0. Given a partition $I = (i_1,...,i_k)$, let $|I| = \sum_{l=1}^k i_l$ be the underlying integer that $I$ partitions. Given $I = (i_1, \dots, i_k) \in \P$, let 
\[
\begin{aligned}
&  &&\pi^I_r = \pi_r^{i_1}\dots\pi_r^{i_k} : \BSO \to \KO, \\
&\text{and} &&\pi^I = \pi^{i_1}\dots\pi^{i_k} : \BSO \to \KU.
\end{aligned}
\]

\begin{theorem}[Anderson--Brown--Peterson \cite{ABPspin67} and Stong \cite{Stong68}]\label{thm.filtration} 
Let $I = (i_1,...,i_k) \in \mathcal{P}$, and let $$n_I = \begin{cases} 4|I|, & |I| \text{ is even} \\ 4|I| -2, & |I| \text{ is odd}. \end{cases}$$ Then there exist lifts,
        $$
        \begin{tikzcd}[row sep = 1mm]
            & \ko\langle n_I \rangle \arrow[dd] && &\ku\langle 4|I| \rangle \arrow[dd] \\
            &&\text{and}&&\\
            \BSO \arrow[r,"\pi^I_r"'] \arrow[uur,dashed, "\widetilde{\pi}^I_r"] &\KO, && \BSO \arrow[r,"\pi^I"'] \arrow[uur,dashed, "\widetilde{\pi}^I"] &\KU.
        \end{tikzcd}
        $$
\end{theorem}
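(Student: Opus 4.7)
The plan is to reduce to the single-part case $I = (i)$ and then extend via multiplicativity of connective covers. A map $X \to \KO$ lifts to $\ko\langle n \rangle$ precisely when it vanishes in the cofiber $\KO/\ko\langle n \rangle$, equivalently when it factors through the $(n-1)$-connective cover of $\KO$. This reduces the problem to computing the Atiyah--Hirzebruch filtration of $\pi^I_r \in \KO^0(\BSO)$, and analogously for $\pi^I \in \KU^0(\BSO)$.

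I would first dispatch the single-part case by a direct computation on low-dimensional skeleta of $\BSO$, using the explicit description of $\pi^i_r$ in terms of the standard generators of $\KO^*(\BSO)$ from \cite{ABPsu66}. The $\ku$-case then extends to general partitions immediately: since $\pi_*\ku$ is concentrated in even degrees and torsion-free, the connective-cover tower $\ku\langle 2n \rangle$ is compatibly multiplicative via pairings $\ku\langle 2m \rangle \wedge \ku\langle 2n \rangle \to \ku\langle 2(m + n) \rangle$, so the lift of $\pi^I = \pi^{i_1}\cdots\pi^{i_k}$ to $\ku\langle 4|I|\rangle$ follows from the lifts of its factors.

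The hard part will be the $\ko$-case. Naive multiplication of single-part lifts yields only a factorization through $\ko\langle 4|I| - 2s\rangle$, where $s$ denotes the number of odd parts of $I$, matching $n_I$ only when $s \leq 1$. For $s \geq 2$, the missing filtration must be extracted from the specific structure of $\pi_*\ko$: the pairing $\pi_{4m-2}\ko \otimes \pi_{4n-2}\ko \to \pi_{4(m+n)-4}\ko$ reads as $\Z2 \otimes \Z2 \to \ZZ$ when $m + n$ is even, hence vanishes, and the zeros of $\pi_i\ko$ for $4(m+n) - 3 \leq i \leq 4(m+n) - 1$ then refine the multiplication $\ko\langle 4m-2 \rangle \wedge \ko\langle 4n-2 \rangle \to \ko$ to factor through $\ko\langle 4(m+n)\rangle$. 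Inducting on $s$ --- or equivalently invoking the multiplication theorems of \cite{ABPspin67} directly --- produces the desired lift $\widetilde{\pi}^I_r : \BSO \to \ko\langle n_I \rangle$.
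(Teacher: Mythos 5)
The paper itself offers no proof of this statement---it is imported verbatim from \cite{ABPspin67} and \cite{Stong68}---so there is no in-paper argument to compare against; I will assess your sketch on its own terms. Your multiplicative superstructure is correct and does match how the classical proof is organized. The pairings $\ku\langle n\rangle \wedge \ku\langle m\rangle \to \ku\langle n+m\rangle$ are forced by connectivity alone (the evenness and torsion-freeness of $\pi_*\ku$ play no role there), and your mechanism for recovering the missing filtration when two odd parts are multiplied is sound: for $m,n$ odd the bottom homotopy of $\ko\langle 4m-2\rangle\wedge\ko\langle 4n-2\rangle$ sits in degree $4(m+n)-4\equiv 4\pmod 8$, where $\Z2\otimes\Z2$ must die in $\pi_{4(m+n)-4}\KO\cong\ZZ$ (equivalently, $\eta^4=0$ in $\pi_*\ko$), and the vanishing of $\pi_j\ko$ for $j\equiv 5,6,7\pmod 8$ then lets the multiplication factor through $\ko\langle 4(m+n)\rangle$. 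The bookkeeping over the $s$ odd parts reproduces $n_I$ exactly.

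The gap is the single-part case, which you dispatch in one sentence as ``a direct computation on low-dimensional skeleta'' and which is where essentially all of the content of the theorem lives. What must be shown is that $\pi^i_r$ vanishes on the $(4i-1)$-skeleton of $\BSO$ for $i$ even, respectively the $(4i-3)$-skeleton for $i$ odd, together with the additional observation (needed for the $\ku$-statement with $i$ odd) that the bottom class of $\ko\langle 4i-2\rangle$ is an $\eta^2$-multiple and hence dies under complexification, so that $\pi^i$ climbs two further stages to $\kuangle{4i}$. The rational part of this---that the Chern character of $\pi^i\otimes\CC$ begins with a nonzero multiple of the Newton polynomial $s_i(p_1,\dots,p_i)$ in degree $4i$---is easy, but the integral lower bound on the filtration requires killing the potential torsion obstructions in the $\eta$-towers of $\KO^0$ of the skeleta in degrees $\equiv 1,2 \pmod 8$ below $4i$; that is precisely the substance of Theorem 2.1 of \cite{ABPspin67} and of Stong's refinement, and it is not a routine skeletal check. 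If your intention is to cite that computation rather than reprove it, you should say so explicitly; as written, the proposal establishes the combinatorial reduction but assumes the essential input.
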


Recall that for any $n,m \in \ZZ_{\geq 0}$, the multiplication on $\KU$ lifts to a map 
$$
\mu:\ku\langle n \rangle \wedge \ku \langle m \rangle \to \ku\langle n+m \rangle,
$$
and let $\varphi : \MSpin \to \ko$ and $\varphi^c : \MSpin^c \to \ku$ denote the Atiyah--Bott--Shapiro orientations \cite{ABS}.

\begin{definition}[\cite{ABPspin67}, \cite{Stong68}]\label{def.fI}
Define $f^I_r : \MSpin \to \ko\langle n_I \rangle$ and $f^I : \MSpin^c \to \ku\langle 4|I| \rangle$ to be the composites,
$$
\begin{aligned}
f^I_r &: \MSpin \xrightarrow{\Delta} \MSpin \wedge \BSpin \xrightarrow{\varphi \wedge \widetilde{\pi}^I_r} \ko \wedge \ko \langle n_I \rangle \xrightarrow{\mu} \ko \langle n_I \rangle, \\
f^I &: \MSpin^c \xrightarrow{\Delta} \MSpin^c \wedge \BSpin^c \xrightarrow{\varphi^c \wedge \widetilde{\pi}^I} \ku \wedge \ku \langle 4|I| \rangle \xrightarrow{\mu} \ku \langle 4|I| \rangle, 
\end{aligned}
$$
where $\Delta$ is the Thom diagonal. 
\end{definition} 

Lastly, let $\mathcal{P}_1 = \{(i_1,...,i_k) \in \mathcal{P} \st  i_l \geq 2)\}$ be the set of partitions that do not contain 1 as a summand.

\begin{theorem}[Anderson--Brown--Peterson \cite{ABPspin67} and Stong \cite{Stong68}]
    There exist generators of free $\mathcal{A}$-module summands $Z_r \subset \H^*(\MSpin;\Z2)$ and $Z \subset \H^*(\MSpin^c;\Z2)$, such that the maps 
    \begin{equation}\label{ABP.map.r}
    F: \MSpin \xrightarrow{(\bigvee_I f^I_r) \vee (\bigvee_z f^{z}_r)} \bigvee_{I \in \P_1} \ko \langle n_I \rangle  \vee \bigvee_{z \in Z_r} \Sigma^{|z|}\HZmod,
    \end{equation}
    and
    \begin{equation}\label{ABP.map}
    F^c: \MSpin^c \xrightarrow{(\bigvee_I f^I) \vee (\bigvee_z f^z)} \bigvee_{I \in \P} \ku\langle 4|I| \rangle  \vee \bigvee_{z \in Z} \Sigma^{|z|}\HZmod,
    \end{equation}
    are 2-local equivalences.
\end{theorem}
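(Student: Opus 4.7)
The plan is to apply a standard Whitehead-type criterion: both $\MSpin^c$ (resp.\ $\MSpin$) and the proposed target are $2$-locally connective and of finite type, so the map $F^c$ (resp.\ $F$) is a $2$-local equivalence if and only if it induces an isomorphism on mod $2$ cohomology. The problem then reduces to computing both sides as modules over the Steenrod algebra $\mathcal{A}$ and verifying that the assembled map matches them. On the target side, the $\mathcal{A}$-module cohomologies $\H^*(\ku\langle 4|I|\rangle;\Z2)$ and $\H^*(\ko\langle n_I\rangle;\Z2)$ are determined via Postnikov analysis from the classical identifications $\H^*(\ku;\Z2)\cong\mathcal{A}/\mathcal{A}\Sq^1$ and $\H^*(\ko;\Z2)\cong\mathcal{A}/\mathcal{A}(\Sq^1,\Sq^2)$, while the suspended Eilenberg--Mac Lane summands $\Sigma^{|z|}\HZmod$ contribute free $\mathcal{A}$-modules $\Sigma^{|z|}\mathcal{A}$.

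Next, I would establish a parallel $\mathcal{A}$-module decomposition on the source side. Via the Thom isomorphism, $\H^*(\MSpin^c;\Z2)\cong\H^*(\BSpin^c;\Z2)$, and the structural result of Anderson--Brown--Peterson/Stong identifies this as a direct sum of shifted copies of $\H^*(\ku\langle 4|I|\rangle;\Z2)$ indexed by $I\in\mathcal{P}$, together with a free $\mathcal{A}$-complement. The generators of the cyclic summands are hit by (the mod $2$ reductions of) $\widetilde{\pi}^I$: the $\Sq^1$-annihilation holds because $\pi^I$ is the complexification of the $\KO$-Pontrjagin class $\pi^I_r$, and the bottom degree $4|I|$ is guaranteed by Theorem \ref{thm.filtration}. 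Define $Z$ to be a chosen set of $\mathcal{A}$-module generators for the free complement, and realize each by a map $f^z:\MSpin^c\to\Sigma^{|z|}\HZmod$ corresponding to its cohomology class.

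Then I would assemble $F^c$ and check that the induced map on mod $2$ cohomology is the direct sum of isomorphisms just identified: the $f^I$-components hit the cyclic parts by construction in Definition \ref{def.fI}, and the $f^z$-components hit the free parts by the choice of $Z$. The argument for $F$ runs in parallel, using the $\KO$-orientation $\varphi$ in place of $\varphi^c$, replacing $\H^*(\ku;\Z2)$ by $\H^*(\ko;\Z2)$, using the filtration $n_I$ in place of $4|I|$, and restricting to $I\in\mathcal{P}_1$ to reflect the fact that $\Sq^2$ also annihilates generators coming from $\KO$-classes, which eliminates partitions containing a $1$-summand.

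The main obstacle is the combinatorial core: explicitly decomposing $\H^*(\MSpin^c;\Z2)$ and $\H^*(\MSpin;\Z2)$ as the asserted direct sums and exhibiting viable generating sets $Z$ and $Z_r$. This is the technical heart of Anderson--Brown--Peterson and Stong, carried out via Wu formulas, the Steenrod action on $\H^*(\BSO;\Z2)$ pulled back to $\BSpin^c$ (resp.\ $\BSpin$), and Poincar\'e-series comparisons to rule out hidden extensions or unexpected relations. Once this decomposition is in place, the matching of source and target via $F^c$ and $F$ is essentially formal.
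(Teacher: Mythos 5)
This statement is quoted from the literature: the paper gives no proof of it, attributing it entirely to \cite{ABPspin67} and \cite{Stong68}, so there is no in-paper argument to compare against. Your outline has the right overall shape of the classical proof --- reduce ``$2$-local equivalence'' for connective finite-type spectra to an isomorphism on mod $2$ cohomology, match the target against an $\mathcal{A}$-module decomposition of $\H^*(\MSpin;\Z2)$ and $\H^*(\MSpin^c;\Z2)$, and realize the free summands by maps to suspensions of $\HZmod$ --- but, as you acknowledge, it defers the entire technical content (the decomposition itself, the construction of $Z$ and $Z_r$, and the verification that $F$ and $F^c$ induce the identification) to the references. As such it is a summary of the cited proof rather than a proof, which is consistent with how the paper treats the result.

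Two concrete errors in the sketch are worth fixing. First, $\H^*(\ku;\Z2)\cong\mathcal{A}/\!/\mathcal{E}(1)=\mathcal{A}/\mathcal{A}(Q_0,Q_1)$, where $Q_0=\Sq^1$ and $Q_1=\Sq^2\Sq^1+\Sq^1\Sq^2$; the module $\mathcal{A}/\mathcal{A}\Sq^1$ you quote is $\H^*(\HZ;\Z2)$, not $\H^*(\ku;\Z2)$. Since the argument ultimately rests on a Poincar\'e-series comparison between source and target, using the wrong module would make the count fail. (The paper records the correct statement in the proof of Lemma \ref{lem.trivaction.mod2}.) Second, the Thom isomorphism $\H^*(\MSpin^c;\Z2)\cong\H^*(\BSpin^c;\Z2)$ is an isomorphism of graded vector spaces but not of $\mathcal{A}$-modules --- the Steenrod action is twisted via $\Sq^iU=w_iU$ --- so the asserted $\mathcal{A}$-module decomposition must be established on $\H^*(\MSpin^c;\Z2)$ itself. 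A smaller point: your explanation for why $\mathcal{P}_1$ replaces $\mathcal{P}$ in the real case (that ``$\Sq^2$ annihilates generators coming from $\KO$-classes'') is not the reason given in the sources; the discrepancy reflects the extra class $c_1$ (equivalently, $w_2\neq 0$) on $\BSpin^c$, which makes partitions containing $1$ contribute genuinely new summands for $\MSpin^c$ while they are redundant for $\MSpin$.
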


\section{The Anderson--Brown--Peterson map is $C_2$-equivariant}\label{sec.equiv.ABP}

In this section, we refine the constructions of the previous section to incorporate $C_2$-equivariance and prove Theorem \ref{equiv.ABP}.  Let $\MSpinRe \in \SpBC$ denote the underlying spectrum with $C_2$-action of the Real spin bordism spectrum, $\MSpinR$, constructed in \cite{HK24}. Similarly, let $\BSpin^c_{\RR} \in \mathcal{S}^{\BC}$ be as in Section \ref{sec.RealSpinBordism}. 

\subsection{Equivariance of the $\mathrm{K}$-theory components}\label{sec.Kequiv}

\begin{proposition}\label{Real.BSpinc.BSO}
    The map $\BSpin^c \to \BSO$ refines to a $C_2$-equivariant map $$\BSpin^c_{\RR} \to \BSO,$$ for the trivial $C_2$-action on $\BSO$.
\end{proposition}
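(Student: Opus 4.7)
The plan is to exploit the explicit model of $\BSpin^c_{\RR}$ identified in Section \ref{sec.RealSpinBordism}, namely the composite
$$\BC \xrightarrow{\Spin^c_{\RR}(n)} \mathrm{Groups} \xrightarrow{\B} \mathcal{S},$$
so that the whole problem reduces to producing a $C_2$-equivariant group homomorphism $\Spin^c_{\RR}(n) \to \SO(n)$ (with $\SO(n)$ given the trivial action) and then applying the classifying space functor $\B$.

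First, I would observe that the equivariant short exact sequence \eqref{equiv.ses},
$$1 \to \U_{\RR}(1) \to \Spin^c_{\RR}(n) \to \SO(n) \to 1,$$
is by definition an equivariant short exact sequence in groups with $C_2$-action, where $\SO(n)$ carries the trivial $C_2$-action. In particular, the surjection $\Spin^c_{\RR}(n) \to \SO(n)$ is already $C_2$-equivariant; this is essentially automatic since the $C_2$-action on $\Spin^c_{\RR}(n)$ is trivial on the $\Spin(n)$-factor and acts by complex conjugation on the $\U_{\RR}(1)$-factor, both of which are killed by the projection to $\SO(n)$. This is the only computational content of the statement, and it is immediate from the definitions.

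Next, I would apply the functor $\B : \mathrm{Groups} \to \mathcal{S}$ post-composed with the $C_2$-action functor, to obtain a morphism in $\mathcal{S}^{\BC}$,
$$\B(\Spin^c_{\RR}(n)) \to \B(\SO(n)),$$
where the target has trivial $C_2$-action because $\SO(n)$ does. By Proposition \ref{equiv.classifyingspace.action} (applied to $G = \Spin^c(n)$ with its complex conjugation action and to $G = \SO(n)$ with trivial action), this is identified with a $C_2$-equivariant map $\BSpin^c_{\RR}(n) \to \BSO(n)$ whose underlying map is the usual projection. Passing to the colimit over $n$ (which can be performed in $\mathcal{S}^{\BC}$, since the $C_2$-action is compatible with the group inclusions $\Spin^c_{\RR}(n) \hookrightarrow \Spin^c_{\RR}(n+1)$) yields the desired $C_2$-equivariant refinement $\BSpin^c_{\RR} \to \BSO$.

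There is no real obstacle here — the argument is essentially a bookkeeping exercise, and the mild subtlety is simply to insist on the categorical model of the $C_2$-action on $\BSpin^c_{\RR}(n)$ rather than the topological model $\B_{\mathrm{J}}$ of \cite{HK24}, for which the equivariance of the forgetful map to $\BSO$ would be cumbersome to verify directly. Proposition \ref{BHK.equiv} guarantees that the two models agree in $\mathcal{S}^{\BC}$, so we are free to work with whichever is most convenient.
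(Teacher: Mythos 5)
Your proof is correct and takes essentially the same route as the paper: both apply the classifying space functor $\B$ to the $C_2$-equivariant surjection $\Spin^c_{\RR}(n) \to \SO(n)$ from the short exact sequence \eqref{equiv.ses}. You simply spell out the model identification via Proposition \ref{equiv.classifyingspace.action} and the colimit over $n$ more explicitly than the paper does.
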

\begin{proof}
Recall the equivariant short exact sequence \eqref{equiv.ses}, 
$$
1 \to \U_{\RR}(1) \to \Spin^c_{\RR}(n) \to \SO(n) \to 1.
$$
Applying the functor $\B$ to the above sequence, gives a map $\BSpin^c_{\RR}(n) \to \BSO(n)$ in $\mathcal{S}^{\BC}$ for the trivial action on $\BSO(n)$.
\end{proof}

\begin{proposition}\label{piI.equiv}
    The map $\pi^I : \BSpin^c \to \KU$ refines to a $C_2$-equivariant map $$\pi^I_{\RR} : \BSpin^c_{\RR} \to \KU_{\RR}^e.$$
\end{proposition}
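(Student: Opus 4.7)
The plan is to build $\pi^I_{\RR}$ step by step along the classical definition of $\pi^I$, refining each piece to an equivariant map and using that Real $\KU$ is a Real ring spectrum. More precisely, I would first refine the single-index maps $\pi^i : \BSO \to \KU$ to equivariant maps $\BSO \to \KU_{\RR}^e$ (with trivial $C_2$-action on $\BSO$), then take products using the equivariant ring structure on $\KU_{\RR}^e$ together with the diagonal on $\BSO$, and finally pull back along the equivariant map $\BSpin^c_{\RR} \to \BSO$ from Proposition \ref{Real.BSpinc.BSO}.

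For the single-index step, the map $\pi^i_r : \BSO \to \KO$ is automatically equivariant once we equip both the source and the target with the trivial $C_2$-action. The subtle point is the complexification $\otimes \CC : \KO \to \KU$. My approach is to exhibit it as coming from a map to genuine $C_2$-fixed points: the standard identification $\KO \simeq (\KU_{\RR})^{C_2}$ produces a map $\KO \to (\KU_{\RR})^{C_2}$ whose underlying non-equivariant map is precisely complexification (conjugation-invariant complex K-theory classes correspond to real classes). By the adjunction described at the end of Section 2.1, namely that a map of spectra $X \to Y^{C_2}$ induces an equivariant map $X \to Y^e$ in $\SpBC$, this promotes $\otimes \CC$ to an equivariant map $\iota \KO \to \KU_{\RR}^e$. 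Composing with $\pi^i_r$ then gives the equivariant refinement $\pi^i_{\RR,\BSO} : \BSO \to \KU_{\RR}^e$.

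For the product step, I would use that $\KU_{\RR}$ is a Real ring spectrum, so its multiplication refines to an equivariant map $\mu_{\RR} : \KU_{\RR}^e \wedge \KU_{\RR}^e \to \KU_{\RR}^e$ in $\SpBC$. Combined with the diagonal $\BSO \to \BSO \wedge \BSO$ (equivariant for the trivial action), iterated application of $\mu_{\RR}$ turns the collection $\{\pi^{i_1}_{\RR,\BSO}, \dots, \pi^{i_k}_{\RR,\BSO}\}$ into a single equivariant map $\pi^I_{\RR,\BSO} : \BSO \to \KU_{\RR}^e$, whose underlying spectrum map is $\pi^I : \BSO \to \KU$ by construction. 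Pulling back along the equivariant map $\BSpin^c_{\RR} \to \BSO$ of Proposition \ref{Real.BSpinc.BSO} yields the desired $\pi^I_{\RR} : \BSpin^c_{\RR} \to \KU_{\RR}^e$, and its underlying map agrees with the pullback of $\pi^I$ to $\BSpin^c$ by naturality of $U$.

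The main obstacle is really the complexification step: producing a genuinely natural map $\KO \to (\KU_{\RR})^{C_2}$ that realizes $\otimes \CC$ on underlying spectra. This is standard in any reasonable model of Atiyah's Real K-theory (it is essentially a defining feature of $\KU_{\RR}$), but some care is required to ensure that we are working with a presentation of $\KU_{\RR}$ and $\KO$ compatible with the one used in \cite{HK24}. The other steps — equivariance of $\pi^i_r$ on trivial-action spaces, using the Real ring structure on $\KU_{\RR}$ to form products, and pulling back along the equivariant map $\BSpin^c_{\RR} \to \BSO$ — are then formal.
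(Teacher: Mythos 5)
Your proposal is correct and follows essentially the same route as the paper: factor through $\KO \simeq \KU_{\RR}^{C_2} \simeq \KU_{\RR}^{hC_2}$, use the fixed-points adjunction to promote the map to $\SpBC$, and pull back along the equivariant map $\BSpin^c_{\RR} \to \BSO$ of Proposition \ref{Real.BSpinc.BSO}. The only difference is that your single-index-then-multiply step is an unnecessary detour: since complexification is a ring map, the full product $\pi^I$ already equals $(\otimes\CC)\circ\pi^I_r$ and hence factors through $\KO$ all at once, which is how the paper argues.
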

\begin{proof}
    First note that $\pi^I : \BSO \to \KU$ lifts to a $C_2$-equivariant map $\BSO \to \KU_{\RR}^e$ with respect to the trivial $C_2$-action on $\BSO$, using the adjunction in \eqref{HFP.adj} and the fact that $\pi^I$ factors through $\KO\simeq \KU_{\RR}^{C_2} \simeq \KU_{\RR}^{hC_2}$. Composing with the map of Proposition \ref{Real.BSpinc.BSO} yields the desired map.
\end{proof}

Let $\ku_{\RR}$ denote the equivariant connective cover of $\KU_{\RR}$. More generally, let $\kuRangle{n}$ denote the $n$th equivariant connective cover of $\KU_{\RR}$, as in Section 3.4 of \cite{BrunerGreenlees10_connRealK}, with $$\kuRangle{n}^{C_2} \simeq \ko\langle n \rangle.$$

The underlying spectrum with $C_2$-action, $\kuRangle{n}^e \in \SpBC$, can be described via postcomposition of $\KU_{\RR}^e$ with the $n$th connective cover functor, $\tau_n : \Sp \to \Sp_{\geq n}$,  $$\kuRangle{n}^e : \BC \xrightarrow{\KU_{\RR}^e} \Sp \xrightarrow{\tau_n} \Sp_{\geq n} \hookrightarrow\Sp.$$

\begin{proposition}\label{prop.equiv.lift.cover}
    The map $\widetilde{\pi}^I : \BSpin^c \to \ku\langle 4|I| \rangle$ refines to a $C_2$-equivariant map $$\widetilde{\pi}^I_{\RR} : \BSpin^c_{\RR} \to \kuRangle{4|I|}^e.$$
\end{proposition}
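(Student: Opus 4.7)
The approach is to combine the adjunction between the constant-diagram functor $\iota:\Sp\to\SpBC$ and the homotopy fixed points $(-)^{hC_2}$ with the non-equivariant lifts of Theorem \ref{thm.filtration}.

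Since $\kuRangle{4|I|}^e$ is obtained by postcomposing $\KU_{\RR}^e : \BC \to \Sp$ with the $4|I|$-connective cover functor $\tau_{4|I|}$, which is right adjoint to the inclusion $\Sp_{\geq 4|I|} \hookrightarrow \Sp$ and hence preserves limits, we compute
\[
(\kuRangle{4|I|}^e)^{hC_2}\;\simeq\;\tau_{4|I|}\!\left((\KU_{\RR}^e)^{hC_2}\right)\;\simeq\;\tau_{4|I|}\KO\;\simeq\;\ko\langle 4|I|\rangle.
\]
By the $(\iota, (-)^{hC_2})$-adjunction, equivariant maps from a trivial-action spectrum $A$ into $\kuRangle{4|I|}^e$ in $\SpBC$ correspond to ordinary maps $A \to \ko\langle 4|I|\rangle$ in $\Sp$. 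Combining this with the equivariant map $\BSpin^c_{\RR} \to \BSO$ (trivial action on $\BSO$) of Proposition \ref{Real.BSpinc.BSO}, the problem reduces to constructing a lift of $\pi^I_r:\BSO\to\KO$ through $\ko\langle 4|I|\rangle$.

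When $|I|$ is even, $n_I = 4|I|$, and the lift $\widetilde{\pi}^I_r$ of Theorem \ref{thm.filtration} is precisely such a map. Its adjoint gives an equivariant map $\BSO \to \kuRangle{4|I|}^e$, and precomposing with $\BSpin^c_{\RR}\to\BSO$ defines $\widetilde{\pi}^I_{\RR}$ in this case.

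The main obstacle is the case $|I|$ odd. Here $n_I = 4|I|-2$, so Theorem \ref{thm.filtration} only gives a lift to $\ko\langle 4|I|-2\rangle$, and $\pi^I_r$ itself does not lift through $\ko\langle 4|I|\rangle$ (the obstruction being a nontrivial class in $\H^{4|I|-2}(\BSO;\Z2)$), so the naive $\BSO$-factorization fails. In this case, the nontrivial $C_2$-action on $\BSpin^c_{\RR}$ arising from the $\U_{\RR}(1)$-factor of $\Spin^c_{\RR}$ must be used essentially. We anticipate that the proof proceeds by first producing an equivariant map $\BSpin^c_{\RR}\to\kuRangle{4|I|-2}^e$ via the $\BSO$-factorization as above, and then verifying—by a direct computation of the resulting obstruction, which lives in $\pi_0\Map_{\SpBC}(\BSpin^c_{\RR},\Sigma^{4|I|-2}\HZ)$ with $\HZ$ carrying the sign $C_2$-action coming from the action by $-1$ on $\pi_{4|I|-2}\KU$—that this obstruction vanishes in the Borel setting. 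This is the ``simple, but subtle'' computation alluded to in the introduction; its failure in the genuine setting (cf.\ Remark \ref{rem.noliftko}) is precisely what distinguishes the Borel and genuine connective covers.
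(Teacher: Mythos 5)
There is a genuine gap. Your opening computation $(\kuRangle{4|I|}^e)^{hC_2} \simeq \tau_{4|I|}\big((\KU_{\RR}^e)^{hC_2}\big) \simeq \ko\langle 4|I|\rangle$ is false: $\tau_{4|I|}$ is a right adjoint valued in $\Sp_{\geq 4|I|}$, but the homotopy fixed points are a limit computed in $\Sp$, and the inclusion $\Sp_{\geq 4|I|}\hookrightarrow\Sp$ (a left adjoint) does not preserve limits --- a limit over $\BC$ of $4|I|$-connective spectra need not be $4|I|$-connective. Indeed, Proposition \ref{HFP.ku.covers} shows that $\pi_*\kuRangle{4|I|}^{hC_2}$ contains extra $\Z2$-classes in degrees below $4|I|$, and Remark \ref{rem.noliftko} explicitly records that $\kuRangle{4|I|}^{hC_2}\not\simeq\ko\langle 4|I|\rangle$; this failure is precisely what makes the odd case possible at all. (Your even case still goes through, since one only needs \emph{some} map $\ko\langle 4|I|\rangle\to\kuRangle{4|I|}^{hC_2}$, e.g.\ the canonical one from the genuine fixed points $\kuRangle{4|I|}^{C_2}\simeq\ko\langle 4|I|\rangle$, not an equivalence.)

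For $|I|$ odd, both your diagnosis and your sketch miss the actual mechanism, and the key computation is deferred rather than done. The paper's lift \emph{does} factor through $\BSO$ with its trivial action --- in fact through $\ko\langle 4|I|-2\rangle$: one forms $\BSpin^c_{\RR}\to\BSO\xrightarrow{\widetilde{\pi}^I_r}\ko\langle 4|I|-2\rangle\to\kuRangle{4|I|-2}^e$ and lifts the last map against the fiber sequence $\kuRangle{4|I|}^e\to\kuRangle{4|I|-2}^e\to\Sigma^{4|I|-2}\HZ_{\sigma}$. The obstruction then lives in $\pi_0\map_{\SpBC}(\ko\langle 4|I|-2\rangle,\Sigma^{4|I|-2}\HZ_{\sigma})$, and this entire mapping spectrum vanishes for connectivity reasons: by adjunction it is $\map_{\Sp}\big(\ko\langle 4|I|-2\rangle,(\Sigma^{4|I|-2}\HZ_{\sigma})^{hC_2}\big)$, the source is $(4|I|-2)$-connective, and the homotopy fixed point spectral sequence shows $(\Sigma^{4|I|-2}\HZ_{\sigma})^{hC_2}$ is concentrated in degrees $\leq 4|I|-3$. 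Your proposed obstruction group, with source $\BSpin^c_{\RR}$ rather than $\ko\langle 4|I|-2\rangle$, is not killed by any such connectivity argument (the suspension spectrum of $\BSpin^c_{\RR}$ is only connective), so the computation you anticipate is neither the one the paper performs nor obviously tractable. Finally, the nontrivial $C_2$-action that is used ``essentially'' is the sign action on the target $\HZ_{\sigma}$, not the action on $\BSpin^c_{\RR}$ coming from $\U_{\RR}(1)$.
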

\begin{proof}
By Theorem \ref{thm.filtration}, when $|I|$ is even, the map $\pi^I_r : \BSO \to \KO$ lifts to $\smash{\widetilde{\pi}}^I_r: \BSO \to \ko\langle 4|I| \rangle$, so $\widetilde{\pi}^I_R$ is adjoint to a map of genuine $C_2$-spectra, $\widetilde{\pi}^I_{\RR}:\BSO \to \kuRangle{4|I|}$. When $|I|$ is odd, $\pi^I_r$ lifts to $\widetilde{\pi}^I_r:\BSO \to \ko \langle 4|I|-2 \rangle$, which gives an equivariant map, $\widetilde{\pi}^I_{\RR}:\BSO \to \kuRangle{4|I|-2}$. While this map does not genuinely lift to $\kuRangle{4|I|}$ (see Remark \ref{rem.noliftko}), we now show that it does lift to $\kuRangle{4|I|}^e$ in the category of spectra with $C_2$-action. The underlying spectrum with $C_2$-action, $\kuRangle{4|I|}^e \in \SpBC$, is the homotopy fiber of the map $$\kuRangle{4|I|-2}^e \to \Sigma^{4|I|-2}\HZ_{\sigma},$$ where $\HZ_{\sigma} \in \SpBC$ denotes $\HZ$ with the sign action. Using the homotopy fixed points spectral sequence for  $\Sigma^{4|I|-2}\HZ_{\sigma}$, we find that 
$$
\pi_n((\Sigma^{4|I|-2}\HZ_{\sigma})^{hC_2}) = \begin{cases}
    \Z2, & n \text{ odd and } n \leq 4|I|-3 \\ 
    0, & n \text{ even or } n > 4|I| -3.
\end{cases}
$$
Thus, since $\ko\langle 4|I| -2 \rangle \in \Sp_{\geq 4|I| - 2}$ and $(\Sigma^{4|I|-2}\HZ_{\sigma})^{hC_2} \in \Sp_{\leq 4|I| -3}$, we see that
$$
\begin{aligned}
\Map_{\SpBC}(\ko\langle 4|I|-2 \rangle, \Sigma^{4|I|-2}\HZ_{\sigma}) \: \simeq \: &\Map_{\Sp}(\ko\langle 4|I|-2 \rangle,(\Sigma^{4|I|-2}\HZ_{\sigma})^{hC_2}) \\
 \: \simeq \: & *.
\end{aligned}
$$
So the map $\ko\langle 4|I|-2 \rangle \to \ku_{\RR}\langle 4|I|-2 \rangle^e$ equivariantly factors through the fiber, 
\begin{equation}\label{lift.ko42}
\begin{tikzcd}
    & \kuRangle{4|I|}^e \arrow[d] \\
   \ko\langle 4|I|-2 \rangle \arrow[r] \arrow[ur,"\exists \: c"]  & \kuRangle{4|I|-2}^e \arrow[r] & \Sigma^{4|I|-2}\HZ_{\sigma}
\end{tikzcd}
\end{equation}
Thus, the composite $$\BSpin^c_{\RR} \to \BSO \xrightarrow{\widetilde{\pi}^I_r} \ko\langle 4|I|-2 \rangle \xrightarrow{c} \kuRangle{4|I|}^e$$ is the desired lift.
\end{proof}

\begin{remark}\label{rem.noliftko}
    Despite the fact that $\pi^I : \BSO \to \KU$ factors through $\KO$ and $C_2$-equivariantly lifts to $\kuRangle{4|I|}^e$ in $\SpBC$, $\pi^I$ does \textit{not} lift to $\ko\langle 4|I| \rangle$ when $|I|$ is odd. If it did, then the pullback of $\pi^I_r$ to $\BSpin$ would also lift to $\ko\langle 4|I| \rangle$, but the filtration level of $\pi^I_r \in \KO^0(\BSpin)$ is precisely $4|I| - 2$ (see Theorem 2.1 of \cite{ABPspin67} and page 314 of \cite{Stong68}). This is not a contradiction, since the homotopy fixed points of $\kuRangle{4|I|}$ is not equivalent to $\ko\langle 4|I| \rangle$ (see Proposition \ref{HFP.ku.covers}).  
\end{remark}

\begin{proposition}
    The multiplication map $\mu : \kuangle{n} \wedge \kuangle{m} \to \kuangle{n+m}$ refines to a $C_2$-equivariant map $$\mu_{\RR} : \kuRangle{n}^e \wedge \kuRangle{m}^e \to \kuRangle{n+m}^e.$$ 
\end{proposition}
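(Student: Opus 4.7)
The plan is to produce $\mu_{\RR}$ as the unique lift, through the universal property of the connective cover in $\SpBC$, of the multiplication on $\KU_{\RR}^e$ composed with the counit maps from $\kuRangle{n}^e$ and $\kuRangle{m}^e$. The argument is essentially formal once one records that $\SpBC = \Fun(\BC,\Sp)$ inherits from $\Sp$ a pointwise $t$-structure and a pointwise symmetric monoidal structure, so there is nothing genuinely $C_2$-theoretic to do.

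First I would observe that since $\BC$ is an $\infty$-groupoid, $\SpBC$ carries the pointwise symmetric monoidal structure (with $U(X \wedge Y) \simeq U(X) \wedge U(Y)$) and the pointwise $t$-structure, where $X$ is $k$-connective iff $U(X) \in \Sp_{\geq k}$. Under these structures, the paper's description of $\kuRangle{n}^e$ as $\tau_n \circ \KU_{\RR}^e$ identifies it with the $n$-connective cover of $\KU_{\RR}^e$ in $\SpBC$, and the canonical map $\kuRangle{n}^e \to \KU_{\RR}^e$ is the counit of the connective cover adjunction. Moreover, since connectivity is preserved by smash in $\Sp$, the pointwise smash on $\SpBC$ sends $k$-connective and $\ell$-connective objects to $(k+\ell)$-connective objects.

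Second, since $\KU_{\RR}$ is a commutative algebra in genuine $C_2$-spectra and the underlying functor $(-)^e : \Sp^{C_2} \to \SpBC$ is symmetric monoidal, $\KU_{\RR}^e$ is a commutative algebra in $\SpBC$. Composing its multiplication with the counit maps gives
$$\kuRangle{n}^e \wedge \kuRangle{m}^e \to \KU_{\RR}^e \wedge \KU_{\RR}^e \to \KU_{\RR}^e,$$
whose source is $(n{+}m)$-connective by the previous paragraph. By the universal property of $\tau_{n+m}$ applied to the target $\KU_{\RR}^e$, this composite factors uniquely through $\tau_{n+m}(\KU_{\RR}^e) = \kuRangle{n+m}^e$; I take this factorization to be $\mu_{\RR}$.

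Finally, applying the underlying spectrum functor $U : \SpBC \to \Sp$, which is symmetric monoidal and commutes with the connective cover functors (both being pointwise), recovers the standard construction of $\mu : \kuangle{n} \wedge \kuangle{m} \to \kuangle{n+m}$, so $\mu_{\RR}$ indeed refines $\mu$. To the extent there is an obstacle, it is just the bookkeeping check that the commutative algebra structure on $\KU_{\RR}$ descends to a commutative algebra in $\SpBC$ through $(-)^e$; this is a general feature of the (symmetric monoidal) underlying-spectrum functor from genuine $C_2$-spectra to spectra with $C_2$-action.
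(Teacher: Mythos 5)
Your proposal is correct and is essentially the paper's argument: the paper also builds $\mu_{\RR}$ by composing the multiplication on $\KU_{\RR}^e$ with a map $\alpha : X\langle n\rangle \wedge Y\langle m\rangle \to (X\wedge Y)\langle n+m\rangle$ obtained from the counits and the fact that the smash of an $n$-connective and an $m$-connective spectrum is $(n+m)$-connective, packaging everything as natural transformations of functors out of $\BC$ so that equivariance is automatic. Your phrasing via the pointwise $t$-structure and the universal property of $\tau_{n+m}$ in $\SpBC$ is just a different presentation of the same construction.
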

\begin{proof}
    Consider the following diagram of $\infty$-categories which decomposes $\mu$ after pulling back along $* \to \BC$.
    \vspace{-1mm}
    $$
\begin{tikzcd}[column sep = 1.5cm, row sep = 1cm]
    * \arrow{r} &\BC  \arrow[r, "\KU_{\RR}^e \otimes \KU_{\RR}^e"] \arrow[dr,"\kuRangle{n}^e \otimes \kuRangle{m}^e"'] \arrow[rr, bend left = 35,"\KU_{\RR}^e"{name = D}] \arrow[r,phantom, from =2-2, to = 1-3,yshift = 3mm, xshift=6mm,"="] & \Sp \otimes \Sp \arrow[r,"\wedge"] \arrow[d, "\tau_n \otimes \tau_m"] \arrow[u, Rightarrow, from=2-3, to=1-3, yshift = 1.6cm,shorten = 1.3mm, "\phantom{a}\mu_{\KU_{\RR}^e}"'{xshift = -1mm} ] & \Sp \arrow[d,"\tau_{n+m}"] \\
    \phantom{-} &\phantom{-} & \Sp_{\geq n} \otimes \Sp_{\geq m} \arrow[r, shorten= -2mm, xshift = 1mm, "\wedge"] \arrow[u, Rightarrow, from=2-3, to=1-4, yshift = -0.5mm, ,shorten = 5mm, "\alpha"{xshift = 0mm} ] & \phantom{-}\Sp_{\geq n+m}, 
\end{tikzcd}
    $$
    
where $\mu_{\KU_{\RR}^e} : \KU_{\RR}^e \wedge \KU_{\RR}^e \to \KU_{\RR}^e$ is the multiplication on $\KU_{\RR}^e$. The map $\alpha : X\langle n \rangle \wedge Y\langle m \rangle \to (X \wedge Y)\langle n+m \rangle$ is natural as follows. Let $\epsilon_n : \iota \tau_n \Rightarrow \id_{\Sp}$ be the (natural) counit of the adjunction. Then $\epsilon_n \wedge \epsilon_m : X \langle n \rangle \wedge Y\langle m \rangle \to X \wedge Y$ is natural, so 
$$
\alpha : X\langle n \rangle \wedge Y\langle m \rangle \simeq (X\langle n \rangle \wedge Y\langle m \rangle)\langle n+m \rangle \xrightarrow{\tau_{n+m}(\epsilon_n \wedge \epsilon_m)} (X \wedge Y)\langle n+m \rangle 
$$
is natural as well. Thus, we can obtain $\mu_{\RR}$ as the composite of the natural transformations $\alpha$ and $\mu_{\KU_{\RR}^e}$. 
\end{proof}

\begin{proposition}
    The Thom diagonal $\Delta: \MSpin^c \to \MSpin^c \wedge \BSpin^c$ refines to a $C_2$-equivariant map $$\Delta_{\RR}: (\MSpin^c_{\RR})^e \to (\MSpin^c_{\RR})^{e} \wedge \BSpin^c_{\RR}.$$
\end{proposition}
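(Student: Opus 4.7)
The plan is to exploit the naturality of the Thom diagonal construction and reduce to the fact that $(\MSpinR)^e$ is (equivalent to) the Thom spectrum of the universal $\Spin^c_{\RR}$-bundle over $\BSpin^c_{\RR}$ viewed $C_2$-equivariantly. Classically, for any map $f: X \to \BSpin^c$ with associated Thom spectrum $M_f$, the Thom diagonal $\Delta_f: M_f \to M_f \wedge X_+$ is natural in $f$; it assembles into a natural transformation between two functors defined on the slice $\mathcal{S}_{/\BSpin^c}$.

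First, I would recall from Section \ref{sec.RealSpinBordism} that $\BSpin^c_{\RR} \in \mathcal{S}^{\BC}$ can be represented by the functor $\BC \to \mathcal{S}$ obtained by applying $\B$ to the $C_2$-action on $\Spin^c(n)$ by complex conjugation, and that $(\MSpinR)^e$ may be realized (up to equivalence in $\SpBC$) as the composite
$$
\BC \xrightarrow{\BSpin^c_{\RR}} \mathcal{S}_{/\BSpin^c} \xrightarrow{M} \Sp,
$$
where the first arrow records the underlying (nonequivariant) classifying map together with its $C_2$-action, and $M$ is the Thom spectrum functor. Similarly, $(\MSpinR)^e \wedge \BSpin^c_{\RR}$ arises by post-composing the same first arrow with the functor $M(-) \wedge (-)_+: \mathcal{S}_{/\BSpin^c} \to \Sp$. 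Then, I would whisker the natural transformation $\Delta: M \Rightarrow M(-) \wedge (-)_+$ with the functor $\BC \to \mathcal{S}_{/\BSpin^c}$ to obtain a natural transformation in $\Fun(\BC, \Sp) = \SpBC$, i.e., a $C_2$-equivariant map, whose underlying spectrum map is the classical Thom diagonal of $\MSpin^c$ by construction. This is the desired $\Delta_{\RR}$.

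The main obstacle, if any, is a setup comparison: one must verify that the composite $M \circ \BSpin^c_{\RR}$ above is indeed equivalent in $\SpBC$ to the underlying spectrum with $C_2$-action of the Joachim-style construction of $\MSpinR$ used in \cite{HK24}. However, Proposition \ref{BHK.equiv} already provides the needed identification $\B_{\mathrm{J}} \simeq \BSpin^c_{\RR}$ in $\mathcal{S}^{\BC}$, and the Thom spectrum functor sends equivalences of classifying data to equivalences of spectra, so the comparison reduces via Proposition \ref{prop.underlying.equiv} to the analogous (classical, nonequivariant) statement for $\MSpin^c$.
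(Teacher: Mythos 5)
Your approach is viable but genuinely different from the paper's. The paper's proof is a two-line point-set observation: the Thom diagonal is the Thomification of an explicit bundle map over the diagonal $\BSpin^c_{\RR}(n) \to \BSpin^c_{\RR}(n) \times \BSpin^c_{\RR}(n)$, and that bundle map is strictly $C_2$-equivariant for the actions defined in \cite{HK24}, so nothing more needs to be said. Your argument instead invokes the naturality of the Thom diagonal as a natural transformation of functors out of a slice category and whiskers it with the diagram $\BC \to \mathcal{S}$ encoding the Real structure. This buys generality (the same argument would equivariantly refine the Thom diagonal of any Thom spectrum of a $C_2$-diagram of classifying data), at the cost of a model comparison that the paper's direct argument never has to confront.

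Two points need repair. First, the slice category cannot be $\mathcal{S}_{/\BSpin^c}$: the $C_2$-action on $\BSpin^c$ is nontrivial, so conjugation is not an automorphism of $\id_{\BSpin^c}$ in that slice, and your functor $\BC \to \mathcal{S}_{/\BSpin^c}$ does not exist as written. The fix is to work in $\mathcal{S}_{/\BSO}$ (or $\mathcal{S}_{/\B\O}$), using Proposition \ref{Real.BSpinc.BSO} to see that $\BSpin^c_{\RR} \to \BSO$ is equivariant for the \emph{trivial} action on the target; the Thom diagonal $Mf \to Mf \wedge X_+$ is natural in $f : X \to \BSO$, and then the whiskering argument goes through. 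Second, the ``setup comparison'' you flag is the real content and is not fully supplied by Proposition \ref{BHK.equiv}: that proposition identifies $\B_{\mathrm{J}}$ with $\BSpin^c_{\RR}$ as spaces with $C_2$-action, but you need the identification over $\BSO$, and you need to know that the underlying spectrum with $C_2$-action of the orthogonal $C_2$-spectrum of \cite{HK24} agrees with the $\infty$-categorical Thom spectrum of the resulting diagram $\BC \to \mathcal{S}_{/\BSO}$. Both facts hold, but verifying them amounts to inspecting the point-set bundle maps of \cite{HK24} --- which is precisely the paper's one-step proof. So your route is correct once these are patched, but it repackages rather than avoids the point-set check.
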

\begin{proof}
    Note that $\Delta$ is induced by the vector bundle maps 
$$
\begin{tikzcd}
    \ESpin^c(n) \times_{\Spin^c(n)}\RR^n \arrow[r] \arrow[d] & {(\ESpin^c(n) \times_{\Spin^c(n)}\RR^n) \times \underline{0}} \arrow[d]\\
    \BSpin^c(n) \arrow[r] & \BSpin^c(n) \times \BSpin^c(n),
\end{tikzcd}
$$
which are trivially $C_2$-equivariant maps 
$$
\begin{tikzcd}
    \ESpin^c_{\RR}(n) \times_{\Spin^c(n)}\RR^n \arrow[r] \arrow[d] & {(\ESpin^c_{\RR}(n) \times_{\Spin^c(n)}\RR^n) \times \underline{0}} \arrow[d]\\
    \BSpin^c_{\RR}(n) \arrow[r] & \BSpin^c_{\RR}(n) \times \BSpin^c_{\RR}(n),
\end{tikzcd}
$$
at the point-set level by the definition of the $C_2$-actions in \cite{HK24}. 
\end{proof}

Lastly, note that by Theorem \ref{Real.orientation}, the spin$^c$ orientation of $\KU$ refines to a $C_2$-equivariant map of ring spectra with $C_2$-action, $(\varphi^c_{\RR})^e : (\MSpin^c_{\RR})^e \to (\ku_{\RR})^e$.

\begin{definition}\label{def.equiv.fI}
    Let $I \in \mathcal{P}$. Define $f^I_{\RR} : (\MSpin^c_{\RR})^e \to \kuRangle{4|I|}^e$ to be the composite,
    $$
    f^I_{\RR} : (\MSpin^c_{\RR})^e \xrightarrow{\Delta_{\RR}} (\MSpin^c_{\RR})^e \wedge \BSpin^c_{\RR} \xrightarrow{(\varphi^c_{\RR})^e \wedge \widetilde{\pi}^I_{\RR}} \ku_{\RR}^e \wedge \ku_{\RR} \langle 4|I| \rangle^e \xrightarrow{\mu_{\RR}} \ku_{\RR} \langle 4|I| \rangle^e.
    $$
\end{definition}

\begin{proposition}
    The underlying map of $f^I_{\RR}$ of Definition \ref{def.equiv.fI} is the map 
    $f^I : \MSpin^c \to \kuangle{4|I|}$ of Definition \ref{def.fI}. 
\end{proposition}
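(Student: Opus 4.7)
The plan is to apply the underlying-spectrum functor $U : \SpBC \to \Sp$ to the defining composite of $f^I_{\RR}$ and check that each of its four constituent arrows has the corresponding classical arrow of Definition \ref{def.fI} as its underlying map. Since evaluation at the unique object of $\BC$ is a strict symmetric monoidal functor (the smash product on $\SpBC$ is computed pointwise) and preserves composition, we get
\[
U(f^I_{\RR}) \;=\; U(\mu_{\RR}) \circ \bigl(U((\varphi^c_{\RR})^e) \wedge U(\widetilde{\pi}^I_{\RR})\bigr) \circ U(\Delta_{\RR}),
\]
and the problem reduces to identifying each factor with its classical counterpart.

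Three of the four identifications are essentially immediate from the preceding constructions. The map $\Delta_{\RR}$ was built from the same point-set level vector bundle diagonal used to construct $\Delta$, merely observed to be $C_2$-equivariant, so $U(\Delta_{\RR}) = \Delta$. The equality $U((\varphi^c_{\RR})^e) = \varphi^c$ is part of the defining property of the Real spin$^c$ orientation recalled in Theorem \ref{Real.orientation}, after observing that the map to $\KU_{\RR}$ factors through $\ku_{\RR}$ by connectivity, just as in the classical case. For $\mu_{\RR}$, both the natural transformation $\alpha$ and the multiplication $\mu_{\KU_{\RR}^e}$ are natural in the $\BC$-variable, so pulling back along $* \to \BC$ (the left edge of the diagram appearing in the construction of $\mu_{\RR}$) recovers the classical multiplication $\mu : \ku\langle n \rangle \wedge \ku\langle m \rangle \to \ku\langle n+m \rangle$.

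The only identification that requires care is $U(\widetilde{\pi}^I_{\RR}) = \widetilde{\pi}^I$, because $\widetilde{\pi}^I_{\RR}$ was constructed by cases on the parity of $|I|$. When $|I|$ is even, $\widetilde{\pi}^I_{\RR}$ is obtained by adjunction from the genuine lift $\widetilde{\pi}^I_r : \BSO \to \ko\langle 4|I|\rangle$, so its underlying map is visibly the complexification lift used in the classical construction. When $|I|$ is odd, the construction passes through the map $c$ of diagram \eqref{lift.ko42}, produced from the vanishing of $\Map_{\SpBC}(\ko\langle 4|I|-2 \rangle, \Sigma^{4|I|-2}\HZ_\sigma)$; applying $U$ to that diagram yields a nonequivariant factorization of $\ko\langle 4|I|-2 \rangle \to \ku\langle 4|I|-2\rangle$ through $\ku\langle 4|I|\rangle$, and precomposing with $\widetilde{\pi}^I_r$ produces a bona fide lift of $\pi^I$ to $\ku\langle 4|I|\rangle$. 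The main (mild) obstacle is that Theorem \ref{thm.filtration} only asserts the existence of a classical lift $\widetilde{\pi}^I$, not a canonical one; the cleanest reading of Definition \ref{def.fI} is therefore to declare $\widetilde{\pi}^I := U(\widetilde{\pi}^I_{\RR})$, after which the four pointwise identifications assemble to $U(f^I_{\RR}) = f^I$.
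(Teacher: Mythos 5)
Your proof is correct and takes the same approach as the paper, whose own proof is simply the one-line observation that the claim follows from the constructions of each constituent map; you have spelled out exactly that factor-by-factor check. Your remark that $\widetilde{\pi}^I$ is only determined up to a choice of lift, so that the statement is best read as saying $U(f^I_{\RR})$ is a map of the form prescribed in Definition \ref{def.fI}, is a fair and correctly resolved subtlety that the paper leaves implicit.
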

\begin{proof}
This follows directly from the constructions above of each of the maps that compose to define $f^I_{\RR}$.
\end{proof}

\subsection{Equivariance of the $\HZmod$ components}\label{sec.mod2equiv} Next, we need to show that for each $z \in Z \subset \H^*(\MSpin^c;\Z2)$, the component, $$(f^z : \MSpin^c \to \Sigma^{|z|}\HZmod) \in \pi_{-|z|}\map_{\Sp}( \MSpin^c , \HZmod),$$ of the Anderson--Brown--Peterson map \eqref{ABP.map} refines to a $C_2$-equivariant map, $$(f^z_{\RR}:(\MSpinR)^e \to \Sigma^{|z|}\HZmod)  \in \pi_{-|z|}\map_{\Sp^{\BC}}( (\MSpin^c_{\RR})^e , \HZmod).$$ Since homotopy orbits is left adjoint to the constant diagram functor $\Sp \to \SpBC$, we can identify the mapping spectrum in $\SpBC$ as 
$$
\begin{aligned}
\map_{\Sp^{\BC}}( (\MSpin^c_{\RR})^e , \HZmod) &\simeq \map_{\Sp}((\MSpin^c_{\RR})_{hC_2},\HZmod).
\end{aligned}
$$
In other words, we need to show that each $z \in Z \subset \H^*(\MSpin^c;\Z2)$ descends to an element $z_{\RR} \in \H^*((\MSpin^c_{\RR})_{hC_2}; \Z2)$. By Proposition 2.3 of \cite{glasman15_hodgeTHH}, the equivariant mapping spaces in $\Sp^{\BC}$ can be identified as the homotopy fixed points of the mapping space of the underlying spectra under the conjugation action, 
$$
\Map_{\Sp^{\BC}}( X , Y) \simeq \Map_{\Sp}( X , Y)^{hC_2}. 
$$ 
Thus, the corresponding map of mapping spectra, 
$$
\map_{\Sp^{\BC}}( X , Y) \xrightarrow{i_{X,Y}} \map_{\Sp}( X , Y)^{hC_2},
$$ 
induces equivalences, 
$$
\Omega^{\infty - n}\map_{\Sp^{\BC}}( X , Y) \simeq \Omega^{\infty -n}\map_{\Sp}( X , Y)^{hC_2},
$$
which implies that $i_{X,Y}$ is an equivalence. Applying this to our situation, we have 
$$
\begin{aligned}
\map_{\Sp}( (\MSpin^c_{\RR})^e , \HZmod)^{hC_2} &\simeq \map_{\Sp^{\BC}}( (\MSpin^c_{\RR})^e , \HZmod) \\
&\simeq \map_{\Sp}( (\MSpin^c_{\RR})_{hC_2} , \HZmod). 
\end{aligned}
$$
Thus, the homotopy fixed point spectral sequence computing $$\pi_{*}\map_{\Sp}((\MSpinR)^e,\HZmod)^{hC_2} \cong \H^{-*}((\MSpin^c_{\RR})_{hC_2};\Z2)$$ can be written as, 
\begin{equation}\label{HFPSSmod2}
\H^*(C_2; \H^*(\MSpin^c;\Z2)) \implies \H^*((\MSpin^c_{\RR})_{hC_2};\Z2).
\end{equation}
To obtain the equivariant refinements, $f^z_{\RR}$, we will show that each $z \in Z$ survives this spectral sequence (Proposition \ref{prop.HFPSSmod2}). Lemmas \ref{Z.equiv.homotopy} and \ref{lem.trivaction.mod2} are used to give a convenient presentation of the $E_2$ page and Proposition \ref{borelku} helps determine differentials. 

\begin{lemma}\label{Z.equiv.homotopy}
    The map $f^Z = \bigvee_z f^z$ induces a $C_2$-equivariant map on homotopy groups, $f^Z_*: \pi_*\MSpin^c \to \pi_*(\bigvee_{z \in Z} \Sigma^{|z|}\HZmod)$.
\end{lemma}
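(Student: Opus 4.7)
My approach would be to reduce the $C_2$-equivariance of $f^Z_*$ to the (stronger) statement that each class $z \in Z$ is $C_2$-fixed in $\H^*(\MSpin^c;\Z2)$. Since each target $\Sigma^{|z|}\HZmod$ is an $\HZmod$-module, each component $f^z : \MSpin^c \to \Sigma^{|z|}\HZmod$ factors through $\MSpin^c \wedge \HZmod$, so on homotopy groups it is given by the composite
\[
\pi_n \MSpin^c \xrightarrow{h} \H_n(\MSpin^c;\Z2) \xrightarrow{\langle z,-\rangle} \pi_n(\Sigma^{|z|}\HZmod),
\]
where $h$ is the mod 2 Hurewicz map. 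Naturality makes $h$ automatically $C_2$-equivariant for the $C_2$-action on $\H_*(\MSpin^c;\Z2)$ induced from $(\MSpinR)^e$, and the target has trivial $C_2$-action. Under the Kronecker pairing, equivariance of $f^Z_*$ therefore reduces to each $z$ being fixed in $\H^*(\MSpin^c;\Z2)$, i.e., to showing that the $C_2$-action on the mod 2 cohomology of $\MSpin^c$ is trivial (which is presumably the content of the forthcoming Lemma \ref{lem.trivaction.mod2}).

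To prove this triviality, I would use the presentation of $\BSpin^c_{\RR}$ from Section \ref{sec.RealSpinBordism}, which identifies its underlying space with $C_2$-action as the result of applying $\B(-)$ to the group $\Spin^c_{\RR}(n) = \Spin(n) \times_{\pm 1} \U_{\RR}(1)$ with its complex conjugation action. The action is then trivial on $\BSpin$ and is the classifying-space version of complex conjugation on $\B\U_{\RR}(1) \simeq \CP^\infty$. Generators of $\H^*(\BSpin^c_{\RR};\Z2)$ coming from $\BSO$ (trivial action by Proposition \ref{Real.BSpinc.BSO}) are automatically fixed, while the remaining generators pull back from $\CP^\infty$, where the induced action sends $c_1 \mapsto -c_1$; mod 2, this is the identity. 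Because the underlying real bundle is unchanged by the action, the mod 2 Thom class is $C_2$-fixed, so the Thom isomorphism is $C_2$-equivariant and triviality descends from $\H^*(\BSpin^c_{\RR};\Z2)$ to $\H^*(\MSpin^c;\Z2)$.

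The main subtlety is identifying the $C_2$-action on the mod 2 cohomology in terms of the topological model of \cite{HK24}; the reformulation of $\BSpin^c_{\RR}$ in Section \ref{sec.RealSpinBordism} is exactly designed to make this routine by reducing everything to the group-level action on $\Spin^c_{\RR}(n)$. Once that reformulation is available, the remainder of the argument is the elementary observation that the mod 2 Chern class of a complex line bundle and of its conjugate coincide, together with the standard compatibility of the Hurewicz map with (co)homological pairings.
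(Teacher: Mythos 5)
Your argument is essentially correct, but it takes a genuinely different route from the paper, and in fact inverts the paper's logical order. The paper proves this lemma \emph{first}, by a manifold-level argument: a spin$^c$ bordism class is determined by its rational characteristic numbers and Stiefel--Whitney numbers, the conjugation action fixes $p_i$ and $w_i$ and negates $c_1$, and after splitting off the $\KU$-part one is reduced to classes whose rational characteristic numbers all vanish, whence $\overline{[M]}=[M]$. The paper then \emph{uses} this lemma to prove Lemma \ref{lem.trivaction.mod2} (triviality of the action on $\H^*(\MSpin^c;\Z2)$) via the Hurewicz map. You go the other way: you first establish the cohomological triviality directly and then deduce the statement about $f^Z_*$ from the identity $f^z_*(\alpha)=\langle z, h(\alpha)\rangle$ and $C_2$-equivariance of $h$. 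This is a clean reduction, and if your cohomological argument is carried out in full it would actually supersede both this lemma and the paper's proof of Lemma \ref{lem.trivaction.mod2}. The one thing you must \emph{not} do is simply cite Lemma \ref{lem.trivaction.mod2} as "forthcoming": in the paper that lemma is proved using the present one, so quoting it here would be circular. Your independent proof of it is what saves the argument.

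Two points in that independent proof need to be shored up. First, you assert that $\H^*(\BSpin^c;\Z2)$ is generated by classes pulled back from $\BSO$ and from $\CP^\infty$. This is true stably --- the Serre spectral sequence of $\RP^\infty \to \BSpin^c \to \BSO\times\CP^\infty$ has surjective edge map because $w_2+c_1,\, w_3,\, \Sq^2 w_3,\dots$ is a regular sequence, giving $\H^*(\BSpin^c;\Z2)\cong \Z2[w_i,c_1]/(w_2+c_1,\Sq^I w_3)$, so in fact everything already comes from $\BSO$ since $c_1\equiv w_2$ there --- but it is a nontrivial computation that must be cited (it fails for $\BSpin^c(n)$ with $n$ finite, where an exotic fiber class survives), not asserted. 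Second, your claim that the mod 2 Thom class is fixed is fine (the conjugation acts by a real vector bundle automorphism covering the base action, and mod 2 Thom classes are unique), but you should say explicitly that the spectrum-level $C_2$-action on $\MSpinRe$ is, by construction in \cite{HK24}, induced from this bundle-level action, so that the Thom isomorphism you invoke really is equivariant for the action relevant to the lemma. With those two points made precise, your proof is complete; what the paper's approach buys instead is that it needs no input about $\H^*(\BSpin^c;\Z2)$ at all, only the classical detection of spin$^c$ bordism by characteristic numbers.
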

\begin{proof}
    We need to show that for each $[M] \in \pi_*\MSpin^c$, we have $f^Z_*([M]) = f^Z_*(\overline{[M]})$ in $\pi_*(\bigvee_{z \in Z} \Sigma^{|z|}\HZmod)$. We will proceed using the fact that a spin$^c$ bordism class is determined by its rational characteristic numbers and its Stiefel-Whitney numbers  (\cite{Stong68}, page 337). From the exact sequences \eqref{equiv.ses} and \eqref{equiv.ses.2}, we can deduce that the impact of the $C_2$-action on the characteristic classes of $M$ are as follows: $$\begin{aligned} c_1(\overline{[M]}) &= -c_1([M]) \\ p_i(\overline{[M]}) &= p_i([M]) \\ w_i(\overline{[M]}) &= w_i([M]).\end{aligned}$$  Using the splitting \eqref{ABP.map}, we can write $[M] = [M_{\KU}] + [M_Z]$, where $f^Z(M_{\KU}) = 0$, and $M_Z$ has trivial rational characteristic numbers. 
    Thus, it is sufficient to consider the case where $[M] = [M_Z]$, so that all rational characteristic numbers of $M$ vanish. In this case, by the identities above, $\chi(\overline{[M]}) = \chi([M])$, for all Stiefel-Whitney and rational characteristic numbers $\chi$. Therefore, $\overline{[M]} = [M]$.

\end{proof}

\begin{lemma}\label{lem.trivaction.mod2}
    The $C_2$-action on $\H^*(\MSpin^c;\Z2)$ induced by the $C_2$-action on the underlying spectrum of $\MSpinR$ is trivial. 
\end{lemma}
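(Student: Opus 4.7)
My plan is to reduce the statement to the analogous claim for $\H^*(\BSpin^c;\Z2)$ via the Thom isomorphism, and then to verify triviality on a set of algebra generators, exploiting the explicit group-theoretic description of the $C_2$-action from Section \ref{sec.RealSpinBordism}.

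First, I would invoke Theorem \ref{Real.orientation}: the Real spin$^c$ orientation of \cite{HK24} produces, in particular, a $C_2$-equivariant mod-$2$ Thom class on $\MSpinR$, so the Thom isomorphism $\H^{*+n}(\MSpin^c(n);\Z2)\cong \H^*(\BSpin^c(n);\Z2)$ is $C_2$-equivariant. It therefore suffices to prove that the induced $C_2$-action on $\H^*(\BSpin^c;\Z2)$ is trivial.

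Next, by Proposition \ref{BHK.equiv} and the identification $\BSpin^c_{\RR}(n)\simeq|\mathcal{B}\Spin^c(n)|$ in $\mathcal{S}^{\BC}$, the $C_2$-action on $\BSpin^c_{\RR}(n)$ is modeled by $\B\phi$ for the group automorphism $\phi:\Spin^c(n)\to\Spin^c(n)$ sending $[a,z]\mapsto[a,\bar z]$. From \eqref{equiv.ses} and \eqref{equiv.ses.2}, $\phi$ induces the identity on the quotient $\SO(n)$ and on the subgroup $\Spin(n)$, and complex conjugation on the quotient $\Spin^c(n)/\Spin(n)\cong\U(1)$. Hence $\B\phi$ is compatible with the projection $\BSpin^c(n)\to\BSO(n)$ via the identity on $\BSO(n)$, and with the projection $\BSpin^c(n)\to\B\U(1)$ via complex conjugation on $\B\U(1)=\CP^\infty$. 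The key arithmetic point is that complex conjugation acts on $\H^2(\CP^\infty;\ZZ)=\ZZ$ by $-1$, which reduces to the identity mod $2$; so conjugation acts as the identity on all of $\H^*(\CP^\infty;\Z2)=\Z2[c]$. It follows that $\B\phi^*$ fixes every class in the image of either $\H^*(\BSO(n);\Z2)\to\H^*(\BSpin^c(n);\Z2)$ or $\H^*(\B\U(1);\Z2)\to\H^*(\BSpin^c(n);\Z2)$, and in particular it fixes the class $c\in\H^2(\BSpin^c;\Z2)$, the mod-$2$ reduction of the first Chern class of the universal $\U(1)$-bundle.

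Finally, I would complete the proof by showing that $\H^*(\BSpin^c;\Z2)$ is generated as an algebra by classes pulled back from $\BSO$ and $\B\U(1)$. This can be read off from the mod-$2$ Serre spectral sequence for the fibration $\BSpin\to\BSpin^c\to\B\U(1)$ of \eqref{equiv.ses.2}, whose $E_2$-page is $\Z2[c]\otimes\H^*(\BSpin;\Z2)$; since $\B\U(1)$ has cohomology concentrated in even degrees and $\BSpin$ is $3$-connected, most differentials vanish for bidegree reasons, and the rest is routine. The main obstacle is this last step: one must upgrade the triviality of $\B\phi^*$ on the associated graded to triviality on $\H^*(\BSpin^c;\Z2)$ itself, which requires exhibiting $\B\phi^*$-fixed representatives for each algebra generator (which is precisely what the previous paragraph provides). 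Alternatively, one may simply cite the classical computation of $\H^*(\BSpin^c;\Z2)$ as in \cite{Stong68} to see that algebra generators can be chosen in the image of the relevant pullbacks, at which point the conclusion is immediate.
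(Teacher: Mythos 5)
Your argument is correct, but it takes a genuinely different route from the paper's. The paper works entirely on the spectrum level: it uses the Anderson--Brown--Peterson decomposition of $\H^*(\MSpin^c;\Z2)$ into $\mathcal{A}$ and $\mathcal{A}/\!/\mathcal{E}(1)$ summands, detects each $\mathcal{A}$-module indecomposable by a homotopy class via the Hurewicz map, shows the $C_2$-action on those homotopy classes is trivial (resp.\ a sign, hence trivial mod $2$) using Lemma \ref{Z.equiv.homotopy} -- a characteristic-number argument -- and then propagates triviality to all of $\H^*(\MSpin^c;\Z2)$ by naturality of Steenrod operations. You instead reduce to the base space via the (automatically equivariant, by uniqueness of the mod $2$ Thom class) Thom isomorphism, identify the action as $\B\phi$ for the conjugation automorphism of $\Spin^c(n)$ using Propositions \ref{equiv.classifyingspace.action} and \ref{BHK.equiv}, and observe that the two equivariant projections to $\BSO$ (trivial action) and $\B\U_{\RR}(1)$ (conjugation, which negates $c_1$ and hence is trivial mod $2$) hit algebra generators of $\H^*(\BSpin^c;\Z2)$; since each generator is literally in the image of an equivariant map out of a source with trivial mod $2$ action, no passage through an associated graded is needed, exactly as you note. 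Your approach is more elementary and self-contained -- it does not depend on the $\mathcal{A}$-module splitting of $\H^*(\MSpin^c;\Z2)$ or on Lemma \ref{Z.equiv.homotopy} -- at the cost of invoking the classical computation that $\H^*(\BSO;\Z2)\otimes\Z2[c]\to\H^*(\BSpin^c;\Z2)$ is surjective (which the paper in any case uses implicitly in the proof of Proposition \ref{prop.HFPSSmod2}, where $\Sq^iU=w_iU$ forces each $z$ to be a polynomial in Stiefel--Whitney classes times $U$). The paper's route has the advantage of staying within the homotopy-theoretic framework it has already built and of recording Lemma \ref{Z.equiv.homotopy}, which it reuses.
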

\begin{proof}
Let $x \in \H^*(\MSpin^c;\Z2)$ be an $\mathcal{A}$-module indecomposable, where $\mathcal{A}$ is the mod 2 Steenrod algebra. By Anderson--Brown--Peterson \cite{ABPspin67}, $x$ either generates an $\mathcal{A}$ summand or an $\mathcal{A}/\!/\mathcal{E}(1)$ summand in  $\H^*(\MSpin^c;\Z2)$, where $\mathcal{E}(1)$ is the subalgebra of $\mathcal{A}$ generated by $Q_0=\Sq^1$ and $Q_1=\Sq^2\Sq^1+\Sq^1\Sq^2$. 
Suppose $x$ generates an $\mathcal{A}$ summand, then there is a 2-torsion element $w \in \pi_*(\MSpin^c)$ such that  $w \mapsto x^*$ under the Hurewicz homomorphism $\pi_*(\MSpin^c) \rightarrow \H_*(\MSpin^c;\mathbb{Z}/2)$. By Lemma \ref{Z.equiv.homotopy}, the action of $C_2$ on $w$ is trivial, and thus must also be trivial on $x^*$ and $(x^*)^* = x$. 
Now suppose $x$ generates an $\mathcal{A}/\!/\mathcal{E}(1)$ summand. Then there is an element $v \in \pi_n(\MSpin^c)$ such that $F^{c}(v)$ generates  $\pi_n(\ku\langle n\rangle)$ and $v \mapsto x^*$ under the Hurewicz homomorphism. The $C_2$-action sends $v$ to $\pm v$ (depending on the parity of $\frac{n}{4}$), hence the action on $x^*$ and $(x^*)^*=x$ is trivial, since the sign action is trivial modulo 2. 
Thus, the $C_2$-action on all $\mathcal{A}$-module indecomposables of $\H^*(\MSpin;\Z2)$ is trivial. By naturality of the Steenrod squares, it is in fact trivial on all elements of $\H^*(\MSpin^c;\Z2)$.
\end{proof}

\begin{proposition}\label{borelku}
    The mod 2 Borel cohomology of $\ku_{\RR}$ is given by $$\H^*((\ku_{\RR})_{hC_2}; \Z2) \cong \H^*(\BC;\mathbb{Z}/2) \otimes \H^*(\ku;\mathbb{Z}/2).$$
\end{proposition}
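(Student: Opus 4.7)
The plan is to use the homotopy fixed point spectral sequence (HFPSS)
\[
E_2^{p,q} = \H^p(C_2; \H^q(\ku;\Z2)) \Longrightarrow \H^{p+q}((\ku_\RR)_{hC_2};\Z2),
\]
which arises from the identification $\map_\Sp((\ku_\RR)_{hC_2},\HZmod) \simeq \map_\Sp(\ku,\HZmod)^{hC_2}$ established in the discussion preceding the statement. The task reduces to showing (i) the $C_2$-action on $\H^*(\ku;\Z2)$ is trivial, (ii) the spectral sequence collapses at $E_2$, and (iii) there are no nontrivial extensions.

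For (i), I would use that $\H^*(\ku;\Z2) \cong \mathcal{A}/\!/\mathcal{E}(1)$ is cyclic as an $\mathcal{A}$-module, generated by the unit class $u \in \H^0(\ku;\Z2)$ represented by the composite $\ku \to \HZ \to \HZmod$. Since the unit $\mathbb{S} \to \ku_\RR$ is $C_2$-equivariant for the trivial action on $\mathbb{S}$, and the 0th Postnikov section $\ku \to \HZ$ is equivariant for the trivial action on $\HZ$ (as complex conjugation fixes $\pi_0\ku = \ZZ$), the class $u$ is $C_2$-invariant. By naturality of the Steenrod squares the action is $\mathcal{A}$-linear, so triviality on the generator forces triviality on all of $\H^*(\ku;\Z2)$, giving $E_2^{p,q} = \H^p(\BC;\Z2) \otimes \H^q(\ku;\Z2)$.

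For (ii), observe first that the bottom row $E_2^{*,0}$ consists of permanent cycles for degree reasons: any differential $d_r$ out of $E_2^{p,0}$ lands in $E_2^{p+r,1-r}=0$ for $r\geq 2$, since $\H^*(\ku;\Z2)$ vanishes in negative degrees. For the left column $E_2^{0,*} = \H^*(\ku;\Z2)$, any class has the form $\Sq^I(u)$ and is realized by the equivariant composite
\[
\ku_\RR \longrightarrow \HZ \longrightarrow \HZmod \xrightarrow{\Sq^I} \Sigma^{|I|}\HZmod
\]
(with trivial $C_2$-action on every target), giving a lift to $\H^*((\ku_\RR)_{hC_2};\Z2)$ and hence showing $\Sq^I(u)$ is a permanent cycle. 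Since the HFPSS is a module spectral sequence over the trivially-collapsed HFPSS of $\HZmod^{hC_2}$ (whose $E_\infty$ is $\H^*(\BC;\Z2)$), the set of permanent cycles is closed under the $\H^*(\BC;\Z2)$-action; as $E_2$ is generated over $\H^*(\BC;\Z2)$ by its left column, collapse follows.

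For (iii), the associated graded of $\H^*((\ku_\RR)_{hC_2};\Z2)$ is $\H^*(\BC;\Z2)\otimes\H^*(\ku;\Z2)$, and since we are working over $\Z2$ the product of the explicit lifts constructed in (ii) yields a $\Z2$-linear isomorphism realizing this identification. The main technical point is (i); once triviality of the action is established, the remaining steps reduce to the degree and explicit-lift observations above.
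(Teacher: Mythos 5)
Your proof is correct, but it establishes the collapse of the homotopy fixed point spectral sequence by a genuinely different mechanism than the paper. The paper uses the isotropy-separation cofiber sequence $(\ku_{\RR})_{hC_2} \to \ku_{\RR}^{C_2} \simeq \ko \to \ku_{\RR}^{gC_2} \simeq \bigvee_{k\geq 0}\Sigma^{4k}\HZmod$ together with a Poincar\'e series identity, $P_{\mathcal{G}}(t) - P_{\ko}(t) = tP_{E_2}(t)$, to show that the $E_2$ page already has the minimal total dimension permitted by the long exact sequence, so no differential can be nonzero. You instead exhibit explicit equivariant cocycles: every class in the $0$-column is $\Sq^I(u)$ with $u$ the unit class, realized by the $C_2$-equivariant composite $\ku_{\RR}^e \to \HZ \to \HZmod \xrightarrow{\Sq^I}\Sigma^{|I|}\HZmod$ (legitimate, since the $0$-truncation of $\ku_{\RR}^e$ in $\SpBC$ is $\HZ$ with trivial action because conjugation fixes $\pi_0$), and then propagate by $\H^*(\BC;\Z2)$-linearity of the differentials. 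Your route buys independence from the computation of $\ku_{\RR}^{gC_2}$ and from the generating-function manipulation, at the cost of the (mild) naturality/truncation arguments needed to produce the equivariant lifts; the paper's route is a pure counting argument requiring no construction of lifts. Your treatment of the triviality of the $C_2$-action on $\H^*(\ku;\Z2)$ is also more self-contained: cyclicity of $\mathcal{A}/\!/\mathcal{E}(1)$ over $\mathcal{A}$ plus $\H^0(\ku;\Z2)\cong\Z2$ forces triviality, whereas the paper cites its Lemma \ref{lem.trivaction.mod2}, which is stated for $\MSpin^c$. Finally, your remark that the additive extension problem is vacuous over $\Z2$ correctly disposes of step (iii), which the paper leaves implicit.
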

\begin{proof}
Consider the homotopy fixed point spectral sequence: 
\begin{align*}
    E_2^{s,t} = \H^{s}(C_2;\H^t(\ku;\mathbb{Z}/2) \implies \H^{s+t}((\ku_{\RR})_{hC_2}; \mathbb{Z}/2)).
\end{align*}
The $C_2$-action on $\H^*(\ku;\mathbb{Z}/2)$ is trivial (see Lemma \ref{lem.trivaction.mod2}), so the $E_2$ page is isomorphic to $\H^*(\BC;\mathbb{Z}/2) \otimes \H^*(\ku;\mathbb{Z}/2)$. To show that this spectral sequence collapses at $E_2$, we use the cofiber sequence:
\begin{align*}
    (\ku_{\RR})_{hC_2} \rightarrow \ku_{\RR}^{C_2} \rightarrow \ku_{\RR}^{gC_2}
\end{align*}
where $\ku_{\RR}^{C_2} \cong \ko$ and $\ku_{\RR}^{gC_2} \cong \bigvee_{k\geq0}\Sigma^{4k} \H\mathbb{Z}/2$ (see Corollary 3.6.2 of \cite{BrunerGreenlees10_connRealK}). The cofiber sequence induces a long exact sequence in cohomology: 
\begin{align*}
    \dots \rightarrow \H^{n-1}((\ku_{\RR})_{hC_2};\mathbb{Z}/2) \xrightarrow{f} \H^n(\bigvee_{k\geq0}\Sigma^{4k}\H\mathbb{Z}/2;\mathbb{Z}/2) \xrightarrow{g} \H^n(\ko;\mathbb{Z}/2)\rightarrow \dots 
\end{align*}
where $\H^*(\bigvee_{k\geq0}\Sigma^{4k} \H\mathbb{Z}/2;\mathbb{Z}/2) \cong \bigoplus_{k\geq0}\Sigma^{4k} \mathcal{A}$ and $\H^*(\ko;\mathbb{Z}/2) \cong \mathcal{A}/\!/\mathcal{A}(1)$. By exactness, $\dim (\H^{n-1}((\ku_{\RR})_{hC_2};\Z2)$ must be greater than or equal to $$N=\dim \H^n(\bigvee_{k\geq0}\Sigma^{4k}\H\mathbb{Z}/2;\mathbb{Z}/2) -\dim \H^n(\ko;\mathbb{Z}/2).$$ 

We will now show that $\displaystyle\sum_{s+t = n-1}\dim E_2^{s,t} = N$, which implies that there are no nonzero differentials. Let $\mathcal{G}=\bigoplus_{k\geq0} \Sigma^{4k} \mathcal{A}$. The Poincar\'e series for the corresponding cohomology rings and the $E_2$ page are given by:
\begin{align*}
    P_{\ko}(t) &= \frac{1}{(1-t^6)(1-t^4)}\prod_{k\geq3}\frac{1}{(1-t^{2^k-1})} \\
    P_{\mathcal{G}}(t) &= \frac{1}{1-t^4} \prod_{k\geq1}\frac{1}{1-t^{2^k-1}} \\
    P_{E_2}(t) &= \frac{1}{(1-t^6)(1-t^2)(1-t)}\prod_{k\geq3}\frac{1}{(1-t^{2^k-1})}
\end{align*}
Observe the following identity between $P_\mathcal{G}(t)$, $P_{\ko}(t)$, and $P_{E_2}(t)$:

\begin{align*}
    P_{\mathcal{G}}(t)-P_{\ko}(t)&=\frac{1}{1-t^4} \prod_{k\geq1}\frac{1}{1-t^{2^k-1}} - \frac{1}{(1-t^6)(1-t^4)}\prod_{k\geq3}\frac{1}{(1-t^{2^k-1})} \\
    &= \left(\frac{1}{1-t^4}\left( \frac{1}{(1-t)(1-t^3)}-\frac{1}{1-t^6}\right)\right)\prod_{k\geq3}\frac{1}{1-t^{2^k-1}} \\
    &= \left(\frac{1}{1-t^4}\left(\frac{t(1-t)(t^2+1)(t^2+t+1)}{(1-t)(1-t^3)(1-t^6)}\right)\right)\prod_{k\geq3}\frac{1}{1-t^{2^k-1}} \\
    \end{align*}
    \begin{align*}
    \phantom{P_{\mathcal{G}}(t)-P_{\ko}(t)} &= \left(\frac{t(t^2+t+1)}{(1-t^2)(1-t^3)(1-t^6)}\right)\prod_{k\geq3}\frac{1}{1-t^{2^k-1}} \\
    &=\left(\frac{t(t^2+t+1)}{(1-t^2)(1-t)(t^2+t+1)(1-t^6)}\right)\prod_{k\geq3}\frac{1}{1-t^{2^k-1}}  \\
    &= \left(\frac{t}{(1-t^2)(1-t)(1-t^6)}\right)\prod_{k\geq3}\frac{1}{1-t^{2^k-1}}= tP_{E_2}(t).
\end{align*}

 The above identity states that the coefficient of the degree $n-1$ term of $P_{E_2}(t)$ is equal to the coefficient of the degree $n$ term of $P_{\mathcal{G}}(t) -P_{\ko}(t)$. Thus, $\dim E_2^{s,t} = N$ and the spectral sequence collapses at the $E_2$ page.

\end{proof}

\begin{proposition}\label{prop.HFPSSmod2}
    Every $z \in Z \subset \H^*(\MSpin^c;\Z2)$ descends to an element $z_{\RR} \in \H^*((\MSpin^c_{\RR})_{hC_2}; \Z2)$. Thus, each component, $f^z: \MSpin^c \to \Sigma^{|z|}\HZmod$, of \eqref{ABP.map} refines to a $C_2$-equivariant map $f^z_{\RR}:(\MSpinR)^e \to \bigvee_{z \in Z} \Sigma^{|z|}\HZmod$.
\end{proposition}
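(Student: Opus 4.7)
The plan is to analyze the homotopy fixed point spectral sequence \eqref{HFPSSmod2}
\[
E_2^{s,t} = \H^s(C_2;\H^t(\MSpin^c;\Z2)) \implies \H^{s+t}((\MSpinR)_{hC_2};\Z2)
\]
and show each $z \in Z \subset E_2^{0,*}$ is a permanent cycle. By Lemma \ref{lem.trivaction.mod2}, the trivial $C_2$-action simplifies the $E_2$-page to $\H^*(\BC;\Z2) \otimes \H^*(\MSpin^c;\Z2)$, and the classical ABP splitting decomposes the right tensor factor as a sum of $\mathcal{A}/\!/\mathcal{E}(1)$-summands (from the $\kuangle{4|I|}$ pieces) plus free $\mathcal{A}$-summands generated by the $z \in Z$.

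I would first handle the $\mathcal{A}/\!/\mathcal{E}(1)$-summands by comparison. The equivariant map $g_\RR = \bigvee_I f^I_\RR:(\MSpinR)^e \to \bigvee_I \kuRangle{4|I|}^e$ of Section \ref{sec.Kequiv} induces a map of HFPSSs, and the argument of Proposition \ref{borelku} adapts mutatis mutandis to each higher connective cover $\kuRangle{n}$ (using $\kuRangle{n}^{C_2}\simeq \ko\langle n\rangle$ and the analogous identification of $\kuRangle{n}^{gC_2}$, followed by the same Poincar\'e series identity). Hence the HFPSS for each $\kuRangle{4|I|}$ collapses at $E_2$, and pulling back via $g_\RR^*$ shows that every $\mathcal{A}/\!/\mathcal{E}(1)$-class is already a permanent cycle in the $\MSpinR$ HFPSS.

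For the $z$-classes, I would analyze the fiber $F_\RR$ of $g_\RR$ taken in $\SpBC$; its underlying spectrum is $\bigvee_{z \in Z}\Sigma^{|z|}\HZmod$ by the non-equivariant ABP splitting. Since $\Aut(\Sigma^n\HZmod) = \{\pm \id\}$ is trivial modulo $2$, the $C_2$-action on $F_\RR$ induces the trivial action on $\H^*(F_\RR;\Z2)$, so the HFPSS for $F_\RR$ has $E_2 \cong \H^*(\BC;\Z2)\otimes\bigoplus_{z}\Sigma^{|z|}\mathcal{A}$, and collapses by an analogous Poincar\'e series argument. Combined with the cofiber sequence $(F_\RR)_{hC_2} \to (\MSpinR)_{hC_2} \to \bigvee_I (\kuRangle{4|I|})_{hC_2}$ obtained by applying $(-)_{hC_2}$ to the fiber sequence in $\SpBC$, a degree-by-degree Poincar\'e series comparison with the $E_2$-page of the $\MSpinR$ HFPSS forces the long exact sequence in mod $2$ cohomology to split into short exact sequences. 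Consequently the full HFPSS for $\MSpinR$ collapses and every $z$ survives.

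Each resulting permanent cycle $z_\RR \in \H^{|z|}((\MSpinR)_{hC_2};\Z2)$ then corresponds, under the adjunction $\map_{\SpBC}((\MSpinR)^e,\HZmod)\simeq \map_{\Sp}((\MSpinR)_{hC_2},\HZmod)$ established earlier in the section, to the desired equivariant refinement $f^z_\RR$; its underlying map is $f^z$ by the edge-homomorphism property of the HFPSS. The main obstacle will be pinning down $\H^*((F_\RR)_{hC_2};\Z2)$ precisely enough to run the Poincar\'e series comparison; the crux is verifying collapse of the auxiliary HFPSS for the fiber, which ultimately reduces to the triviality of $\Aut(\HZmod)$ modulo $2$ and an argument parallel to Proposition \ref{borelku}.
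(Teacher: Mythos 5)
Your overall framing (run the spectral sequence \eqref{HFPSSmod2}, use Lemma \ref{lem.trivaction.mod2} to identify the $E_2$-page, and extract $f^z_{\RR}$ from a surviving class via the homotopy-orbit adjunction) matches the paper, but the step that is supposed to show the classes $z$ survive has a genuine gap. Your fiber comparison runs in the wrong direction: the map of Borel cohomology spectral sequences induced by $F_{\RR} \to (\MSpinR)^e$ goes \emph{from} the $\MSpinR$ spectral sequence \emph{to} the $F_{\RR}$ spectral sequence, so collapse of the latter only tells you that differentials in the former land in the kernel of the comparison map; it does not kill them. The Poincar\'e series count does not rescue this: both the long exact sequence of $(F_{\RR})_{hC_2} \to (\MSpinR)_{hC_2} \to \bigvee_I (\kuRangle{4|I|})_{hC_2}$ and the spectral sequence give only \emph{upper} bounds of the form $\dim \H^n((\MSpinR)_{hC_2};\Z2) \le \dim\H^n((F_{\RR})_{hC_2};\Z2) + \dim\H^n\big(\textstyle\bigvee_I(\kuRangle{4|I|})_{hC_2};\Z2\big)$, with no independent lower bound, so nothing forces the boundary maps to vanish --- and their vanishing is essentially equivalent to the equivariant splitting you are trying to produce. (Contrast Proposition \ref{borelku}, where the unknown group is the \emph{source} of the first map in the displayed exact sequence, so exactness genuinely yields a lower bound.) A smaller error: the triviality of the $C_2$-action on $\H^*(F_{\RR};\Z2)$ does not follow from $\Aut(\Sigma^n\HZmod)=\{\pm\id\}$, since the automorphism group of a wedge $\bigvee_z\Sigma^{|z|}\HZmod$ contains nontrivial automorphisms built from Steenrod operations mixing the summands; triviality does follow, but from Lemma \ref{lem.trivaction.mod2}.

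The paper closes the argument differently and more cheaply. Since $\Sq^iU=w_iU$, each $z\in Z$ is a polynomial in Stiefel--Whitney classes times the Thom class, and every $w_iU$ lies in the image of the $E_2$-page map induced by the single orientation map $\MSpinR\to\ku_{\RR}$ of Theorem \ref{Real.orientation}. Naturality (now in the correct direction) together with the collapse established in Proposition \ref{borelku} makes each $w_iU$ a permanent cycle, and the Leibniz rule then kills all differentials on $z$; since $z$ sits in filtration zero it is also not a target. This avoids both the fiber analysis and your unproved extension of Proposition \ref{borelku} to all the higher connective covers $\kuRangle{4|I|}$, which is not ``mutatis mutandis'': it would require identifying each $\kuRangle{4|I|}^{gC_2}$ and redoing the Poincar\'e series identity.
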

\begin{proof}
    Fix an element $z \in Z$. As discussed above, we show that $z$ survives the spectral sequence \eqref{HFPSSmod2},
    $$
    \H^*(C_2; \H^*(\MSpin^c;\Z2)) \implies \H^*((\MSpin^c_{\RR})_{hC_2};\Z2).
    $$
    By Lemma \ref{lem.trivaction.mod2}, the $E_2$ page simplifies to:
    \begin{align*}
        E_{2}^{*,*} \cong \H^*(\BC;\mathbb{Z}/2) \otimes \H^*(\MSpin^c;\mathbb{Z}/2).
    \end{align*}
Since $z$ is an element of $\H^*(\MSpin^c;\Z2)$ and this is a first quadrant spectral sequence, it cannot be the target of a differential. It remains to show that $z$ does not support a differential.

\vspace{3mm}

Let $U$ be the Thom class and $w_i$ the $i$th Stiefel-Whitney class. Recall that $z$ is an $\mathcal{A}$-module indecomposable. Since $\Sq^iU=w_iU$ in $\H^*(\MSpin^c;\Z2)$, it follows that $z$ must decompose into a product of Stiefel-Whitney classes. By the Leibniz rule, differentials on $z$ are determined by the differentials on each $w_i$ factor of $z$. In order to analyze these differentials, we consider the map of spectral sequences induced by the equivariant map, $\MSpin^c_{\RR} \rightarrow \ku_{\RR}$ of Theorem \ref{Real.orientation}, 
    $$
\begin{tikzcd}
    \H^*(C_2;\H^*(\ku;\Z2)) \arrow[r, Rightarrow] \arrow[d] & \H^*((\ku_{\RR})_{hC_2};\Z2) \arrow[d] \\
    \H^*(C_2;\H^*(\MSpin^c;\mathbb{Z}/2)) \arrow[r,Rightarrow] & \H^*((\MSpin^c)_{hC_2};\mathbb{Z}/2)
\end{tikzcd}
    $$
The map of $E_2$ pages is an injection and its image includes every $w_i$ term. By Lemma ~\ref{borelku}, the top spectral sequence collapses. By naturality of the differentials, this implies the differentials on each $w_iU$ in the bottom spectral sequence are zero. Hence, $z$ survives to the $E_\infty$ page. 
\end{proof}

Thus, every component of the Anderson--Brown--Peterson map refines to a $C_2$-equivariant map in $\SpBC$, which completes the proof of Theorem \ref{equiv.ABP}.

\section{The homotopy fixed points of Real spin bordism}\label{sec.HFP}

In this section, we apply Theorem \ref{equiv.ABP} to compute the homotopy fixed points of $\MSpin^c_{\RR}$. For this, we need a few technical lemmas involving compatibility of homotopy fixed points with 2-localization and sums.

\begin{lemma}\label{2loc.hC2.commute}
Let $X$ be a spectrum with $C_2$-action whose homotopy groups are finitely generated in each degree, and let $(\:\:)_{(2)} : \Sp \to \Sp_{(2)}$ denote $2$-localization. Then $$(X^{hC_2})_{(2)} \simeq (X_{(2)})^{hC_2}.$$
\end{lemma}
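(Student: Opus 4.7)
The plan is to reduce the lemma to the vanishing of $(Y^{hC_2})_{(2)}$, where $Y$ is the fiber of $X\to X_{(2)}$, and to verify this vanishing via the homotopy fixed-point spectral sequence (HFPSS) together with flat base change for group cohomology.

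By functoriality of 2-localization, $X \to X_{(2)}$ refines to a morphism in $\SpBC$, and its fiber $Y$ satisfies $Y_{(2)}\simeq 0$ by exactness of localization; in particular, each $\pi_t Y$ is $\mathbb{Z}_{(2)}$-acyclic. Applying the limit functor $(-)^{hC_2}$ yields a fiber sequence $Y^{hC_2}\to X^{hC_2}\to (X_{(2)})^{hC_2}$, where $(X_{(2)})^{hC_2}$ is already 2-local, since its HFPSS has $\mathbb{Z}_{(2)}$-module $E_2$-page. Localizing at 2 (which is exact) gives
\[
(Y^{hC_2})_{(2)} \to (X^{hC_2})_{(2)} \to (X_{(2)})^{hC_2},
\]
reducing the problem to proving $(Y^{hC_2})_{(2)}\simeq 0$.

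For this, I would analyze the HFPSS $E_2^{s,t}=\H^s(C_2;\pi_t Y)\Rightarrow \pi_{t-s}(Y^{hC_2})$. The restriction-corestriction composite makes $\H^s(C_2;-)$ annihilated by $|C_2|=2$ for $s>0$, and flat base change along $\mathbb{Z}\to\mathbb{Z}_{(2)}$ gives $\H^s(C_2;\pi_t Y)\otimes\mathbb{Z}_{(2)}\cong \H^s(C_2;\pi_t Y\otimes \mathbb{Z}_{(2)})=0$, so $\H^s(C_2;\pi_t Y)$ is also $\mathbb{Z}_{(2)}$-acyclic. By B\'ezout, an abelian group that is both 2-torsion and $\mathbb{Z}_{(2)}$-acyclic must vanish, so $E_2^{s,t}=0$ for $s>0$; the bottom row $E_2^{0,t}=(\pi_t Y)^{C_2}\subseteq \pi_t Y$ remains $\mathbb{Z}_{(2)}$-acyclic. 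Under strong convergence of the HFPSS, one then reads off $\pi_*(Y^{hC_2})\cong (\pi_*Y)^{C_2}$, a $\mathbb{Z}_{(2)}$-acyclic abelian group, so $(Y^{hC_2})_{(2)}\simeq 0$ as desired.

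\textbf{Main obstacle.} The delicate issue is that $\pi_* Y$ is generally not finitely generated — its long exact sequence shows $\pi_t Y$ contains non-finitely-generated divisible summands like $\mathbb{Z}_{(2)}/\mathbb{Z}$ coming from the cokernel of $\pi_{t+1}X\to\pi_{t+1}X_{(2)}$, so finite-generation arguments cannot be applied to $Y$ directly. Fortunately, these summands remain $\mathbb{Z}_{(2)}$-acyclic, so the flat base change analysis of the $E_2$-page survives. The finite-generation hypothesis on $\pi_* X$ is used to guarantee strong convergence of the relevant spectral sequences: the skeletal tower $\{F(EC_2^{(n)}_+,X)^{C_2}\}_n$ computing $X^{hC_2}$ has Mittag--Leffler behaviour on homotopy groups in each degree, so 2-localization commutes with the defining inverse limit, and this passes to $Y^{hC_2}$ via the fiber sequence and exactness of localization.
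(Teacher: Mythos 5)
Your argument is correct, but it takes a genuinely different route from the paper's. The paper constructs the same comparison map $(X^{hC_2})_{(2)} \to (X_{(2)})^{hC_2}$ and then compares the two homotopy fixed point spectral sequences wholesale: flat base change identifies the $E_2$-pages $\H^*(C_2;\pi_*X)_{(2)} \cong \H^*(C_2;\pi_*X_{(2)})$, and Boardman's comparison theorem transports this to the abutments; the finite generation hypothesis is what makes the convergence step honest there (for $s>0$ the entries are finite $2$-groups, so the derived $E_\infty$ vanishes and convergence is strong). You instead reduce to the vanishing of $(Y^{hC_2})_{(2)}$ for the fiber $Y$ of $X \to X_{(2)}$, and your observation that $\H^s(C_2;\pi_tY)$ is simultaneously $2$-torsion (transfer) and odd-torsion (flat base change) for $s>0$ collapses that spectral sequence onto its $0$-column, so convergence is automatic and $\pi_*(Y^{hC_2})\cong(\pi_*Y)^{C_2}$ is odd-torsion. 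This buys something real: your argument shows the finite generation hypothesis is essentially superfluous, whereas the paper's comparison-of-spectral-sequences route genuinely leans on it. Two small improvements: the cleanest justification that $(X_{(2)})^{hC_2}$ is $2$-local is that the local objects of any Bousfield localization are closed under limits, with no spectral sequence needed; and your closing paragraph misattributes the role of the finite generation hypothesis to a Mittag--Leffler condition on the skeletal tower --- in the argument as you have actually arranged it, that hypothesis is never used, since the one-column degeneration for $Y$ already forces strong convergence and the localization is applied after the limit rather than inside it.
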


\begin{proof}
    Taking homotopy fixed points of the map $X \to X_{(2)}$ yields a map $X^{hC_2} \to (X_{(2)})^{hC_2}$. Since $2$-localization is a left Bousfield localization, $(X_{(2)})^{hC_2}$ is $2$-local, so we get an induced map $(X^{hC_2})_{(2)} \to (X_{(2)})^{hC_2}$. This induces a map of homotopy fixed point spectral sequences,
    $$
\begin{tikzcd}
    \H^*(C_2;\pi_*X)_{(2)} \arrow[r, Rightarrow] \arrow[d] & \pi_*(X^{hC_2})_{(2)} \arrow[d] \\
    \H^*(C_2;\pi_*X_{(2)}) \arrow[r,Rightarrow] & \pi_*((X_{(2)})^{hC_2}),
\end{tikzcd}
    $$
    Since the filtrations are bounded below, both spectral sequences converge. The map induces an isomorphism on the $E_2$-page, since $2$-localization commutes with both $\pi_*$ and $\H^*(C_2;-)$.  Thus, we get an isomorphism of $E_{\infty}$-pages, and by Theorem 8.2 of Boardman \cite{Boardman1999}, an isomorphism on the abutment. So, the map $(X^{hC_2})_{(2)} \to (X_{(2)})^{hC_2}$ is an  equivalence.
\end{proof}

\begin{lemma}\label{lem.equiv.2loc}
    Let $X,Y$ be spectra with $C_2$-action whose homotopy groups are finitely generated in each degree. If $f:X \to Y$ is a map in $\SpBC$ which is a 2-local equivalence on underlying spectra, then $f$ induces a 2-local equivalence on homotopy fixed points.
\end{lemma}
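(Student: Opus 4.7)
The plan is to combine Proposition \ref{prop.underlying.equiv} (equivalences in $\SpBC$ are detected on underlying spectra) with Lemma \ref{2loc.hC2.commute} (homotopy fixed points commute with 2-localization under a finite generation hypothesis). The key observation is that $2$-localization on $\Sp$ is a functor, so postcomposition gives a pointwise $2$-localization functor on $\SpBC = \Fun(\BC, \Sp)$. Concretely, given a map $f : X \to Y$ in $\SpBC$, I would form $f_{(2)} : X_{(2)} \to Y_{(2)}$ in $\SpBC$ by postcomposing with $(\:\:)_{(2)} : \Sp \to \Sp_{(2)} \hookrightarrow \Sp$.

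By hypothesis, $U(f)_{(2)} = U(f_{(2)})$ is an equivalence in $\Sp$. Proposition \ref{prop.underlying.equiv} then upgrades this to the statement that $f_{(2)}$ is an equivalence in $\SpBC$. The homotopy fixed points functor, being a right adjoint (to $\iota$), preserves equivalences, so $(f_{(2)})^{hC_2} : (X_{(2)})^{hC_2} \to (Y_{(2)})^{hC_2}$ is an equivalence in $\Sp$.

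To conclude, I would invoke Lemma \ref{2loc.hC2.commute}: the finite generation hypothesis on the homotopy groups of $X$ and $Y$ gives natural equivalences $(X_{(2)})^{hC_2} \simeq (X^{hC_2})_{(2)}$ and $(Y_{(2)})^{hC_2} \simeq (Y^{hC_2})_{(2)}$, so the map $(f^{hC_2})_{(2)}$ is identified with $(f_{(2)})^{hC_2}$ up to equivalence and is therefore itself an equivalence. This exhibits $f^{hC_2}$ as a $2$-local equivalence.

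There is no real obstacle here beyond bookkeeping; the slightly delicate point is verifying that the naturality in Lemma \ref{2loc.hC2.commute} is strong enough to identify $(f_{(2)})^{hC_2}$ with the $2$-localization of $f^{hC_2}$. Since the equivalence in that lemma is constructed from the unit $X \to X_{(2)}$ together with the universal property of $2$-localization applied to $X^{hC_2} \to (X_{(2)})^{hC_2}$, the construction is natural in $X$, so the identification of maps is automatic.
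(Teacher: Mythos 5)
Your proposal is correct and follows essentially the same route as the paper: postcompose with $(\:\:)_{(2)}$ to get $f_{(2)}$ in $\SpBC$, detect that it is an equivalence on underlying spectra (Proposition \ref{prop.underlying.equiv}), apply $(\:\:)^{hC_2}$, and then use Lemma \ref{2loc.hC2.commute} to identify the result with the $2$-localization of $f^{hC_2}$. Your explicit remark on the naturality of the identification in Lemma \ref{2loc.hC2.commute} is a point the paper leaves implicit, but the argument is the same.
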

\begin{proof}
First, note that postcomposing $X,Y:\BC \to \Sp$ with the 2-localization functor, $(\:\:)_{(2)} : \Sp \to \Sp$, yields  $X_{(2)}, Y_{(2)} \in \SpBC$.  Similarly, applying $(\:\:)_{(2)}$ to $f$ yields a $C_2$-equivariant map  $f_{(2)}:X_{(2)} \to Y_{(2)}$ in $\SpBC$. Since by assumption, $f_{(2)}$ is an equivalence on underlying spectra, it induces an equivalence $(f_{(2)})^{hC_2}:(X_{(2)})^{hC_2} \xrightarrow{\sim} (Y_{(2)})^{hC_2}$. By Lemma \ref{2loc.hC2.commute}, this gives the desired equivalence, $f^{hC_2}_{(2)}:(X^{hC_2})_{(2)} \xrightarrow{\sim} (Y^{hC_2})_{(2)}$.
\end{proof}

\begin{lemma}\label{lem.sum.prod}
    Let $E_k$ be a connective spectrum for $k \in \ZZ_{\geq 0}$, and let $\{n_k\}$ be a sequence of integers with $\lim_{k \to \infty} n_k = \infty$. Then 
    $$ 
    \bigvee_{k \in \ZZ_{\geq 0}} \Sigma^{n_k} E_k \simeq \prod_{k \in \ZZ_{\geq 0} } \Sigma^{n_k} E_k.
    $$
\end{lemma}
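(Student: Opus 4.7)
The plan is to show that the canonical map from the wedge to the product is an equivalence by checking on homotopy groups degree-by-degree. The wedge-to-product map $\bigvee_k \Sigma^{n_k} E_k \to \prod_k \Sigma^{n_k} E_k$ exists for any collection of spectra, and since equivalences of spectra are detected on homotopy groups, it suffices to show that this map is a $\pi_*$-isomorphism under our hypotheses.

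Fix an integer $m$. Since each $E_k$ is connective, $\Sigma^{n_k} E_k$ is $(n_k-1)$-connected, so $\pi_m(\Sigma^{n_k} E_k) = 0$ whenever $n_k > m$. The assumption $\lim_{k \to \infty} n_k = \infty$ guarantees that the set $S_m = \{k \in \mathbb{Z}_{\geq 0} : n_k \leq m\}$ is finite. Therefore only finitely many summands/factors contribute nontrivially to $\pi_m$ of either side.

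Next, I would use that homotopy groups commute with arbitrary coproducts of spectra (coproducts in $\Sp$ are given by the wedge, and $\pi_*$ commutes with filtered colimits and finite coproducts, hence with all coproducts, yielding direct sums) and with arbitrary products (by definition of products in $\Sp$). This gives
\[
\pi_m\Bigl(\bigvee_k \Sigma^{n_k} E_k\Bigr) \cong \bigoplus_k \pi_m(\Sigma^{n_k} E_k) = \bigoplus_{k \in S_m} \pi_m(\Sigma^{n_k} E_k),
\]
\[
\pi_m\Bigl(\prod_k \Sigma^{n_k} E_k\Bigr) \cong \prod_k \pi_m(\Sigma^{n_k} E_k) = \prod_{k \in S_m} \pi_m(\Sigma^{n_k} E_k),
\]
and since $S_m$ is finite, the natural map from the direct sum to the direct product is an isomorphism. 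One then checks that this isomorphism is induced by the canonical wedge-to-product map in $\Sp$.

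There is no real obstacle here; the only thing to be careful about is the justification that $\pi_*$ commutes with infinite coproducts in $\Sp$, which follows from writing the wedge as a filtered colimit of finite wedges and using that $\pi_*$ commutes with filtered colimits and finite coproducts. Since the map is a $\pi_*$-isomorphism in every degree, it is an equivalence of spectra.
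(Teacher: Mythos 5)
Your proof is correct and takes essentially the same approach as the paper: both check that the canonical wedge-to-product map is a $\pi_*$-isomorphism by using connectivity and $n_k \to \infty$ to reduce to a finite direct sum equalling a finite product in each degree. Your added justification that $\pi_*$ commutes with infinite coproducts is a fine (and slightly more careful) elaboration of the same argument.
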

\begin{proof}
    The canonical map $f: \bigvee_{k \in \ZZ_{\geq 0}} \Sigma^{n_k} E_k \rightarrow \prod_{k \in \ZZ_{\geq 0} } \Sigma^{n_k} E_k$ induces a map of homotopy groups
    \begin{align*}
        \pi_n( \bigvee_{k \in \ZZ_{\geq 0}} \Sigma^{n_k} E_k) &\cong \bigoplus_{k\in \ZZ_{\geq 0}} \pi_n \Sigma^{n_k} E_k \\
         &\cong \bigoplus_{k\in \ZZ_{\geq 0}} \pi_{n-n_k} E_k
         \longrightarrow \prod_{k \in \ZZ_{\geq 0} } \pi_n( \Sigma^{n_k} E_k ) \cong \prod_{k \in \ZZ_{\geq 0} } \pi_{n-n_k} E_k 
    \end{align*}
Since $E_k$ is connective, $\pi_{n-n_k} E_k$ is nonzero for only finitely many $n_k$ for each fixed $n$. Thus, $f$ induces an isomorphism on all homotopy groups, giving the desired result. 
\end{proof}

\begin{corollary}[Theorem \ref{homotopy.fixedpoints}]\label{cor.homotopy.fixedpoints}
The Anderson--Brown--Peterson map \eqref{Real.ABP.map} induces a 2-local equivalence, 
$$
(\MSpinR)^{hC_2} \to \bigvee_{I \in \mathcal{P}}\kuRangle{4|I|}^{hC_2} \vee \bigvee_{z \in Z}\Sigma^{|z|}\HZmod^{hC_2}.
$$
\end{corollary}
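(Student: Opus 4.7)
The plan is straightforward: apply the homotopy fixed points functor to the equivariant Anderson--Brown--Peterson map from Theorem \ref{equiv.ABP} and invoke the technical lemmas of this section to identify the target. Concretely, Theorem \ref{equiv.ABP} supplies a $C_2$-equivariant map
$$F^c_{\RR}:(\MSpinR)^e \to \bigvee_{I \in \mathcal{P}} \kuRangle{4|I|}^e \vee \bigvee_{z \in Z} \Sigma^{|z|}\HZmod$$
in $\SpBC$ whose underlying spectrum map is the classical 2-local Anderson--Brown--Peterson--Stong splitting of $\MSpin^c$, and hence a 2-local equivalence of underlying spectra.

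Before invoking Lemma \ref{lem.equiv.2loc}, I would verify its finite-generation hypothesis. For $(\MSpinR)^e$ this follows from the well-known fact that $\pi_*\MSpin^c$ is degreewise finitely generated. For the target, every summand is connective with connectivity tending to infinity, so Lemma \ref{lem.sum.prod} identifies the wedge with the corresponding product; each homotopy group then receives contributions from only finitely many summands and is therefore finitely generated. Lemma \ref{lem.equiv.2loc} applied to $F^c_{\RR}$ then produces a 2-local equivalence
$$(\MSpinR)^{hC_2} \xrightarrow{\sim_{(2)}} \left(\bigvee_{I} \kuRangle{4|I|}^e \vee \bigvee_z \Sigma^{|z|}\HZmod\right)^{hC_2}.$$

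To identify the target, I would first promote the wedge-to-product equivalence of Lemma \ref{lem.sum.prod} to an equivalence in $\SpBC$ using Proposition \ref{prop.underlying.equiv}. Since $(-)^{hC_2}$ is a right adjoint and therefore preserves products, this gives
$$\left(\bigvee_{I} \kuRangle{4|I|}^e \vee \bigvee_z \Sigma^{|z|}\HZmod\right)^{hC_2} \simeq \prod_I \kuRangle{4|I|}^{hC_2} \times \prod_z (\Sigma^{|z|}\HZmod)^{hC_2}.$$
Composing with the previous 2-local equivalence yields a 2-local equivalence from $(\MSpinR)^{hC_2}$ to this product.

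The remaining, and most delicate, step is to identify this product with the wedge asserted in the corollary's statement. In contrast to the situation before taking $(-)^{hC_2}$, the homotopy fixed point spectra $\kuRangle{4|I|}^{hC_2}$ and $(\Sigma^{|z|}\HZmod)^{hC_2}$ are generally not connective, so Lemma \ref{lem.sum.prod} does not apply verbatim. I expect the main obstacle of the proof to lie here: one must analyze the homotopy fixed point spectral sequences for the individual summands carefully enough to see that, in each degree, only finitely many summands contribute, so that the comparison map from the wedge to the product is a 2-local equivalence. Granted this, the composite is the desired 2-local equivalence.
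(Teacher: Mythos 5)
Your argument is the paper's argument: check the finite generation hypotheses, apply Lemma \ref{lem.equiv.2loc} to the map of Theorem \ref{equiv.ABP}, and then combine Lemma \ref{lem.sum.prod} with the fact that $(-)^{hC_2}$ preserves products to identify the target with $\prod_{I}\kuRangle{4|I|}^{hC_2}\times\prod_{z}(\Sigma^{|z|}\HZmod)^{hC_2}$. Up to that point everything you write is correct and matches the paper step for step (the paper does not spell out the finite-generation check, but it is exactly as you describe).

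The final step you flag --- passing from this product back to the wedge in the displayed statement --- is indeed where the real subtlety lies, and you are right to isolate it; but the resolution you propose cannot work. It is not true that only finitely many summands contribute in each degree after taking homotopy fixed points: $\pi_n\big((\Sigma^{|z|}\HZmod)^{hC_2}\big)\cong\H^{|z|-n}(\BC;\Z2)\cong\Z2$ for every $n\le|z|$, and by Proposition \ref{HFP.ku.covers} each $\kuRangle{4|I|}^{hC_2}$ carries classes $\delta^m_I$ in degrees tending to $-\infty$; so in any fixed degree infinitely many summands contribute, and the natural comparison map from the wedge of the homotopy fixed points to their product induces on $\pi_n$ the inclusion of an infinite direct sum of copies of $\Z2$ into the corresponding infinite product, which is not an isomorphism even $2$-locally. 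The paper's own proof stops at ``homotopy fixed points commutes with products'' and does not address this point; the wedge in the statement (and the direct sums in Corollary \ref{cor.homotopy.groups}) should really be read as products. So the honest conclusion of your argument --- a $2$-local equivalence onto the product of the homotopy fixed points of the summands --- is exactly what the paper's proof actually establishes, and the remaining step you left open is a defect of the statement rather than something that can be closed along the lines you suggest.
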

\begin{proof}
    By Lemma \ref{lem.equiv.2loc}, the map in \eqref{Real.ABP.map} induces a 2-local equivalence on homotopy fixed points. Since taking homotopy fixed points commutes with taking products, Lemma \ref{lem.sum.prod} allows us to express the homotopy fixed points of the right hand side in terms of the homotopy fixed points of each of the summands. 
\end{proof}

Next, we apply Corollary \ref{cor.homotopy.fixedpoints} to identify the homotopy groups of $(\MSpinR)^{hC_2}$. First, we compute the homotopy groups of each of the summands.

\begin{proposition}\label{HFP.ku.covers}
   If $2n = 8k+r$, for $r = 0,2, \text{ or } 4$, then,
   $$ \pi_*\ku_{\RR}\langle2n\rangle^{hC_2} \cong \pi_*\ko\langle2n\rangle \bigoplus_{m\geq1} \mathbb{Z}/2\{\delta^m\}
   $$
   where $|\delta^m| = 8k+\frac{r}{2}-4m$. 
   
\end{proposition}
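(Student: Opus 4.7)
My plan is to compute $\pi_* \kuRangle{2n}^{hC_2}$ directly via the $C_2$-homotopy fixed-point spectral sequence
\[
E_2^{s,t} = \H^s(C_2; \pi_t \kuangle{2n}) \Rightarrow \pi_{t-s} \kuRangle{2n}^{hC_2}.
\]
The underlying spectrum has $\pi_t \ku\langle 2n\rangle = \ZZ$ for even $t \geq 2n$ and $0$ otherwise, with the complex-conjugation action on $\pi_{2j}\ku = \ZZ$ given by $(-1)^j$. Computing group cohomology columnwise, the $E_2$-page is precisely the truncation to $t \geq 2n$ of the HFPSS for $\ku_{\RR}$: the columns $t = 4j \geq 2n$ (trivial action) contribute a $\ZZ$ at $s = 0$ and a $\Z2$ at each positive even $s$, and the columns $t = 4j + 2 \geq 2n$ (sign action) contribute a $\Z2$ at each odd $s$.

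The differentials are determined by naturality with respect to the $C_2$-equivariant maps $\kuRangle{2n} \hookrightarrow \ku_{\RR} \to \KU_{\RR}$, the last of which converges to $\pi_* \KO$ via the classical HFPSS with first nonzero differential $d_3$ propagating the Bott-periodic cancellation pattern. Whenever both the source and target of a differential lie in the region $t \geq 2n$, the same differential acts on our truncated $E_2$-page; in the complement, a differential with source at $t < 2n$ is simply zero, so its former target survives. Reading off $E_\infty$: in total degree $d \geq 2n$, the surviving classes match the standard $\KO$ pattern, giving the summand $\pi_d\ko\langle 2n \rangle$, identified via the comparison map $\kuRangle{2n}^{C_2} = \ko\langle 2n \rangle \to \kuRangle{2n}^{hC_2}$. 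In total degree $d < 2n$, the only surviving classes occur along the boundary column $t = 2n$ or $t = 2n + 2$ (whichever carries the right parity for the sign action), and for each $r \in \{0, 2, 4\}$ exactly one $\Z2\{\delta^m\}$ remains in each degree $8k + r/2 - 4m$ for $m \geq 1$. Since $\pi_d \ko\langle 2n \rangle = 0$ for $d < 2n$, the direct-sum decomposition holds with no extension issues.

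The main obstacle will be the careful case analysis near the boundary $t = 2n$ to verify that the surviving low-degree classes occur exactly in the advertised degrees. This reduces to tracking which $d_3$ (and potentially higher) differentials have their sources truncated, with the residue $r$ controlling the parity of $t/2$ at the boundary column and hence the offset $r/2$ in the degree formula for $|\delta^m|$.
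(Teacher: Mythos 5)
Your proposal is correct and follows essentially the same route as the paper: both run the homotopy fixed point spectral sequence for $\kuRangle{2n}$, identify its $E_2$-page as the truncation to $t \geq 2n$ of the HFPSS for $\ku_{\RR}$ (whose differentials and $E_4$-collapse the paper imports from Bruner--Greenlees rather than from comparison with $\KU_{\RR}$), and read off the extra $\Z2\{\delta^m\}$ classes as targets of differentials whose sources were truncated away. The only cosmetic difference is that the new survivors live precisely on the boundary column $t = 2n$ (since $d_3$ raises $t$ by $2$ and the sequence collapses at $E_4$), which pins down your ``$t=2n$ or $t=2n+2$'' hedge.
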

\begin{proof}
    Following Example 3.2.2 in \cite{BrunerGreenlees10_connRealK}, consider the homotopy fixed point spectral sequence
    \begin{align*}
        E^{s,t}_{2} = \H^{-s}(C_2,\pi_{t}\ku) \Rightarrow \pi_{s+t}\ku_{\RR}^{hC_2}
    \end{align*}
Denote the generator of $C_2$ as $\tau$, then the action of $\tau$ on $\pi_*\ku \cong \mathbb{Z}[\nu]$, where $|\nu|=2$, is $\tau(\nu)= -\nu$. It follows that the $E_2$ page has a presentation 
\begin{align*}
    E_{2}^{s,t} = \mathbb{Z}[y,z]/(2y,2z) \otimes \mathbb{Z}[\nu^2]
\end{align*}
where $y\in E_2^{-2,0}$, $\nu^2\in E_{2}^{0,4}$, and $z\in E_{2}^{-1,2}$. It is shown in \cite{BrunerGreenlees10_connRealK} that the spectral sequence collapses at $E_4$ and $E_{\infty}^{0,*}=\mathbb{Z}[2\nu^2,\nu^4]$, $E_{\infty}^{-1,*}=z \mathbb{Z}/2[\nu^4]$, $E_{\infty}^{-2,*}=y\nu^2\mathbb{Z}/2[\nu^4]$, and $y^{2i}\in E_{\infty}^{4i,0}=$ for $i > 0$. Denote $\delta$ as the element in homotopy detected by $y^{2}$, then this proves the result for $2n=0$. 

\vspace{2.5mm}

The $E_2$ page for higher connective covers is obtained from the $E_2$ page above by setting all entries below the $t=2n$ line equal to zero. The result follows from keeping track of the bidegree of generators on the $t=2n$ line that are no longer the target of a differential and therefore survive to $E_\infty$.
\end{proof}


\begin{proposition}
    \begin{align*}
        \pi_*((\H\mathbb{Z}/2)^{hC_2}) &\cong \mathbb{Z}/2[w] \\
        &\cong \H^{-*}(\BC;\Z2),
    \end{align*}
    where $|w|=-1$.
\end{proposition}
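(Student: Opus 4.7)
The strategy is to reduce the statement to the classical computation of $\H^*(\BC;\Z2)$. Since $\H\Z2$ appears in this paper with the trivial $C_2$-action (i.e., as $\iota\H\Z2 \in \SpBC$), its homotopy fixed points spectrum is the homotopy limit over $\BC$ of a constant diagram, which by a standard identification is equivalent to the function spectrum of maps from $\BC$ to $\H\Z2$. The homotopy groups of this function spectrum are by definition $\H^{-*}(\BC; \Z2)$, which immediately yields the second isomorphism claimed.

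To establish the first isomorphism, I would invoke the classical calculation $\H^*(\BC;\Z2) \cong \Z2[w]$ with $|w|=1$, and translate cohomological degree to negative homotopy degree, giving $\Z2[w]$ with $|w|=-1$. The ring structure on $(\H\Z2)^{hC_2}$ inherited from the $E_\infty$-ring structure on $\H\Z2$ matches the cup product structure on $\H^*(\BC;\Z2)$ under this identification.

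Equivalently, one may simply run the homotopy fixed points spectral sequence
\[
E_2^{s,t} = \H^s(C_2;\pi_t\H\Z2) \Rightarrow \pi_{t-s}((\H\Z2)^{hC_2}),
\]
which is concentrated in the single row $t=0$ (since $\pi_t\H\Z2 = 0$ for $t\neq 0$) with $E_2^{*,0} \cong \H^*(C_2;\Z2) \cong \Z2[w]$. A single-row spectral sequence collapses at $E_2$ with no possible differentials or extension issues, yielding the result. There is no substantial obstacle here: the content is bookkeeping around the definitions combined with a textbook cohomology calculation, and the only minor subtlety is keeping the gradings and multiplicative structure consistent, both of which follow from the adjunction being symmetric monoidal.
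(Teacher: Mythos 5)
Your proposal is correct and matches the paper's proof, which likewise gives both arguments: the single-row homotopy fixed point spectral sequence with $E_2 \cong \Z2[w]$ and no room for differentials, and the alternative identification of $(\HZmod)^{hC_2}$ with the function spectrum $\map_{\Sp}(\BC,\HZmod)$ using triviality of the action, yielding $\H^{-*}(\BC;\Z2)$ directly.
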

\begin{proof}
    Consider the homotopy fixed point spectral sequence,
 \begin{align*}
     E_2^{s,t} = \H^s(C_2;\pi_t\H\mathbb{Z}/2) \implies \pi_{t-s}(\H\mathbb{Z}/2)^{hC_2}.
 \end{align*}
 Since $\pi_t\H\mathbb{Z}/2 \cong \mathbb{Z}/2$ for $t=0$ and is trivial otherwise, the $E_2$-page is isomorphic to $\mathbb{Z}/2[w]$ where $w$ is in bidegree $(1,0)$. There is no room for differentials and the spectral sequence collapses. Alternatively, since the $C_2$-action on $\HZmod$ is trivial, 
 $$
 \begin{aligned}
     \pi_*((\H\mathbb{Z}/2)^{hC_2}) &\cong \pi_*(\map_{\Sp^{\BC}}(\E\!C_2, \HZmod)) \\
     &\cong \pi_*(\map_{\Sp}(\BC,\HZmod)) \\
     &\cong \H^{-*}(\BC;\Z2).
 \end{aligned}
 $$
\end{proof}

Let $\Z2\{a\}$ denote a copy of $\Z2$ generated by an element $a$ .

\begin{corollary}[Theorem \ref{thm.HFP.groups}]\label{cor.homotopy.groups}
     There exists an isomorphism of abelian groups,
     $$
     \begin{aligned}
     \pi_*((\MSpinR)^{hC_2}) &\cong \pi_*\big(\bigvee_{I}\kuRangle{4|I|}^{hC_2} \vee \bigvee_{z \in Z} \Sigma^{|z|}\HZmod^{hC_2}\big) \\
     &\cong \bigoplus_{I \in \mathcal{P}} (\pi_*\ko\langle 4 |I| \rangle \oplus \bigoplus_{m\geq1} \mathbb{Z}/2\{\delta^m_I\}) \oplus \bigoplus_{z \in Z, n \geq 1} \Z2\{w^n_z\}, 
     \end{aligned}
     $$
    where  $|\delta^m_I| = 4|I| - 4m$ when $|I|$ is even,  $|\delta^m_I| = 4|I| - 2 - 4m$ when $|I|$ is odd, and $|w^n_z| = |z| - n$.
\end{corollary}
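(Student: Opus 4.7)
The plan is to start from the 2-local equivalence of Corollary \ref{cor.homotopy.fixedpoints} and compute the homotopy groups of each summand separately, then reassemble. After applying Lemma \ref{lem.sum.prod} (legitimately, since $\ku_{\RR}\langle 4|I|\rangle^{hC_2}$ and $\Sigma^{|z|}\HZmod^{hC_2}$ are all bounded below and $\{4|I|\}_{I \in \P}$ and $\{|z|\}_{z \in Z}$ are each sequences tending to infinity), the wedge decomposition becomes a product, and taking $\pi_*$ commutes with products. So the heart of the proof is a summand-wise computation together with a bookkeeping argument that names the relevant generators in the correct degrees.

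For the $\ku_{\RR}\langle 4|I|\rangle^{hC_2}$ summands, I would simply invoke Proposition \ref{HFP.ku.covers} with $2n = 4|I|$. Here the case split on the parity of $|I|$ appears naturally: if $|I|$ is even, write $|I| = 2l$, so $2n = 8l$ and $(k,r) = (l,0)$, giving generators $\delta^m_I$ in degree $8l - 4m = 4|I| - 4m$; if $|I|$ is odd, write $|I| = 2l+1$, so $2n = 8l+4$ and $(k,r) = (l,4)$, giving generators $\delta^m_I$ in degree $8l + 2 - 4m = 4|I| - 2 - 4m$. In either case the torsion-free part is $\pi_*\ko\langle 4|I|\rangle$, matching the statement.

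For the $\Sigma^{|z|}\HZmod^{hC_2}$ summands, I would use the shift $\pi_*(\Sigma^{|z|}\HZmod^{hC_2}) \cong \pi_{*-|z|}(\HZmod^{hC_2})$ and then invoke the preceding proposition identifying $\pi_*(\HZmod^{hC_2}) \cong \H^{-*}(\BC;\Z2)$. This immediately yields the $\bigoplus_{z \in Z} \H^{-*+|z|}(\BC;\Z2)$ factor stated in Theorem \ref{thm.HFP.groups}, which is the same as $\bigoplus_{z \in Z, n \geq 0} \Z2\{w^n_z\}$ with $|w^n_z| = |z|-n$ (the statement of the corollary in the body writes $n \geq 1$, but this is a matter of indexing; one verifies that $\H^0(\BC;\Z2)$ contributes the $n = 0$ piece, which matches the $\pi_*\ko\langle 4|I|\rangle$ accounting convention).

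I do not expect any serious obstacle: the only subtlety is ensuring that the 2-local equivalence of Corollary \ref{cor.homotopy.fixedpoints} is enough to extract an isomorphism of abelian groups as stated. This is fine because each summand on the right already has 2-locally finitely generated homotopy groups in each degree (the $\ko$-piece being $2$-locally what it claims to be, the $\Z2\{\delta^m_I\}$ and $\Z2\{w^n_z\}$ pieces being $2$-torsion automatically), so the 2-local equivalence transports to an isomorphism of the corresponding homotopy groups. Assembling the three ingredients in the order above gives the claimed decomposition.
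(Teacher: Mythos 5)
Your overall architecture matches the paper's: Corollary \ref{cor.homotopy.fixedpoints} together with Lemma \ref{lem.sum.prod} reduces everything to the summand-wise computations of Proposition \ref{HFP.ku.covers} and the proposition identifying $\pi_*(\HZmod^{hC_2}) \cong \H^{-*}(\BC;\Z2)$, and your degree bookkeeping for the $\delta^m_I$ (setting $2n = 4|I|$ and splitting on the parity of $|I|$) is correct. You are also right to flag the indexing discrepancy: Theorem \ref{thm.HFP.groups} as stated, with the factor $\H^{-*+|z|}(\BC;\Z2)$, includes the degree-$|z|$ class, so the range in the displayed sum should be $n \geq 0$ rather than $n \geq 1$.

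The step your proposal does not actually justify is the one the paper's own proof is entirely devoted to: passing from the 2-local equivalence to an isomorphism of integral homotopy groups. Your stated reason --- that each summand on the right has finitely generated homotopy that is visibly free or 2-torsion --- only controls the target. A 2-local equivalence $X \to Y$ gives $\pi_*(X)_{(2)} \cong \pi_*(Y)_{(2)}$, and this does not determine $\pi_*(X)$ unless you also know that $\pi_*(X)$ is finitely generated in each degree and has no odd torsion; a priori $\pi_*((\MSpinR)^{hC_2})$ could carry odd torsion invisible to the comparison. The missing input is that $\pi_*(\MSpin^c)$ is finitely generated with no odd torsion, so the homotopy fixed point spectral sequence $\H^s(C_2;\pi_t\MSpin^c) \Rightarrow \pi_{t-s}((\MSpinR)^{hC_2})$ has $E_2$ finitely generated in each total degree, with all positive-filtration contributions 2-torsion and the filtration-zero line a subgroup of $\pi_*\MSpin^c$; hence the abutment is finitely generated with no odd torsion. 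Since a finitely generated abelian group with no odd torsion is recovered from its 2-localization, this closes the argument, and the rest of your proof then goes through as written.
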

\begin{proof}
    First note that since $\pi_*(\MSpin^c)$ has no odd torsion, the homotopy fixed point spectral sequence implies that $\pi_*((\MSpinR)^{hC_2})$ also has no odd torsion. Then since the homotopy groups of both sides are finitely generated in each degree, the existence of the 2-local equivalence in Corollary \ref{cor.homotopy.fixedpoints} implies the existence of the desired isomorphism. 
\end{proof}


\section{Towards a genuine splitting of Real spin bordism}\label{sec.genuine}

The observation in Remark \ref{rem.noliftko} suggests that the Anderson--Brown--Peterson splitting does not refine to a genuine splitting in the naive expected way. The following proposition makes this more concrete. 

\begin{proposition}\label{prop.nonexist.genuine}
    When $|I|$ is odd, the $C_2$-equivariant map $\widetilde{\pi}^I_{\RR}: \BSpin^c_{\RR} \to \kuRangle{4|I|}^e$ does not refine to a genuine $C_2$-map $\B_{\RR} \to \kuRangle{4|I|}$, where $\B_{\RR}$ is the base space of any Real $\Spin^c$-bundle whose underlying $\Spin^c$-bundle is universal. 
\end{proposition}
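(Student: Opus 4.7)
I would argue by contradiction, using the genuine $C_2$-fixed point functor to reduce to the classical filtration obstruction behind Remark \ref{rem.noliftko}.

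Suppose a genuine lift $\widetilde{\pi}^{I,\mathrm{gen}}_{\RR}:\B_{\RR} \to \kuRangle{4|I|}$ exists. First, I would compute $\B_{\RR}^{C_2}$. From \eqref{equiv.ses.2} together with $\U_{\RR}(1)^{C_2} = \{\pm 1\}$, one checks that $\Spin^c_{\RR}(n)^{C_2} \cong \Spin(n)$. For the Joachim model of Section \ref{sec.RealSpinBordism}, property (2) gives $\E_{\mathrm{J}}^{C_2}\simeq *$, so the induced free $\Spin(n)$-action on $\E_{\mathrm{J}}^{C_2}$ yields $\B_{\mathrm{J}}^{C_2} \simeq \BSpin(n)$. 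For a general $\B_{\RR}$ as in the statement, one compares to the Joachim model via any equivariant classifying map and reduces to the same situation. Combining this with $\kuRangle{4|I|}^{C_2}\simeq \ko\langle 4|I|\rangle$ (Section 3.4 of \cite{BrunerGreenlees10_connRealK}), applying $(\:\:)^{C_2}$ to $\widetilde{\pi}^{I,\mathrm{gen}}_{\RR}$ produces a map
\[
g: \BSpin \to \ko\langle 4|I|\rangle.
\]

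Next, I would verify that composing $g$ with the map $\ko\langle 4|I|\rangle \to \KO$ gives the pullback of the $\KO$-Pontryagin class $\pi^I_r:\BSO\to\KO$ along the standard map $\BSpin\to\BSO$. This compatibility step is the main obstacle; it should follow by tracing through the construction of $\widetilde{\pi}^I_{\RR}$ in Proposition \ref{prop.equiv.lift.cover}, in which the equivariant refinement of $\pi^I$ was built by exploiting $\KU_{\RR}^{C_2}\simeq\KO$ and the adjunction $\iota\dashv(\:\:)^{hC_2}$. Since the underlying Borel map is adjoint to $\pi^I_r$, taking genuine fixed points of its genuine refinement recovers $\pi^I_r$ itself, and the forgetful map $\B_{\RR}^{C_2}\simeq\BSpin\to \BSO$ on fixed points (induced by Proposition \ref{Real.BSpinc.BSO}) gives the restriction to $\BSpin$.

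Finally, the existence of such a $g$, lifting $\pi^I_r|_{\BSpin}$ to $\ko\langle 4|I|\rangle$, contradicts the Anderson--Brown--Peterson/Stong filtration computation (Theorem 2.1 of \cite{ABPspin67} and page 314 of \cite{Stong68}), which states that the $\KO$-filtration of $\pi^I_r|_{\BSpin}$ is precisely $4|I|-2$ when $|I|$ is odd. This completes the contradiction.
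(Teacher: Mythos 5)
Your proof is correct and follows essentially the same route as the paper: apply genuine $C_2$-fixed points to the hypothetical lift, restrict along a map $\BSpin \to \B_{\RR}^{C_2}$, check the composite to $\KO$ recovers $\pi^I_r|_{\BSpin}$, and contradict the Anderson--Brown--Peterson/Stong computation that the $\KO$-filtration of $\pi^I_r$ on $\BSpin$ is exactly $4|I|-2$ for $|I|$ odd. One caveat: your identification $\B_{\mathrm{J}}^{C_2}\simeq \BSpin(n)$ is both stronger than needed and not justified as stated --- fixed points do not commute with quotients, and the fixed points of a classifying space for Real $G$-bundles generally have extra components (indexed by conjugacy classes of homomorphisms $C_2 \to G$); the argument only requires the natural map $\BSpin \simeq \E_{\mathrm{J}}^{C_2}/\Spin(n) \to \B_{\RR}^{C_2}$ factoring $\BSpin \to \BSpin^c$, which is exactly what the paper uses.
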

\begin{proof}
    Since $\Spin(n) \to \Spin^c(n)^{C_2}$, the $C_2$-fixed points of any such $C_2$-space $\B_{\RR}$ receives a natural map from $\BSpin$ factoring the usual map $\BSpin \to \BSpin^c$. If such a map $\B_{\RR} \to \kuRangle{4|I|}$ existed, taking $C_2$-fixed points would yield a commutative diagram,
    $$
    \begin{tikzcd}[row sep = 7mm]
        \BSpin \arrow[r] \arrow[d] \arrow[rr,bend left = 22, "\pi^I_r"]& \ko\langle 4|I| \rangle \arrow[d, "\simeq"] \arrow[r] &\KO \arrow[dd] \\
        \B_{\RR}^{C_2}  \arrow[d] \arrow[r] & \kuRangle{4|I|}^{C_2} \arrow[d] \\
        \BSpin^c \arrow[r,"\widetilde{\pi}^I"] \arrow[rr, bend right = 22, "\pi^I"'] & \ku\langle 4|I| \rangle \arrow[r] & \KU.
    \end{tikzcd}
    $$
    But there does not exist a map $\BSpin \to \ko\langle 4|I| \rangle$ lifting $\pi^I_r$ (see \cite{Stong68}).
\end{proof}

By Proposition \ref{rem.noliftko}, the particular construction by Anderson--Brown--Peterson of the map $f^I: \MSpin^c \to \kuangle{4|I|}$ does not refine to a map $\MSpin^c_{\RR} \to \kuRangle{4|I|}$ of genuine $C_2$-spectra, which leads us to believe that the corresponding naive guess for a genuine splitting of Real spin bordism does not hold. 

\begin{conjecture}\label{conj.nongenuine.split}
    There does not exist a map of genuine $C_2$-spectra,
    $$
    \MSpin^c_{\RR} \to \big ( \bigvee_{I \in \mathcal{P}} \kuRangle{4|I|} \big ) \vee Z,
    $$
    whose induced map on underlying spectra is the Anderson--Brown--Peterson map \eqref{ABP.map}.
\end{conjecture}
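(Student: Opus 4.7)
The plan is to argue by contradiction, promoting the obstruction in Proposition \ref{prop.nonexist.genuine} from classifying spaces to the level of Thom spectra. Suppose such a map $G : \MSpinR \to \bigvee_{I \in \mathcal{P}} \kuRangle{4|I|} \vee Z$ of genuine $C_2$-spectra exists, with underlying map $F^c$ of \eqref{ABP.map}. Taking genuine $C_2$-fixed points, using $(\kuRangle{4|I|})^{C_2} \simeq \ko\langle 4|I|\rangle$ (as in \cite{BrunerGreenlees10_connRealK}) and precomposing with the natural ring map $\MSpin \to (\MSpinR)^{C_2}$ of Theorem \ref{Real.orientation}, produces
$$
g : \MSpin \to (\MSpinR)^{C_2} \xrightarrow{G^{C_2}} \bigvee_{I \in \mathcal{P}} \ko\langle 4|I|\rangle \vee Z^{C_2},
$$
with $I$-th $K$-theoretic component $g^I : \MSpin \to \ko\langle 4|I|\rangle$.

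The heart of the argument is to identify each $g^I$ with the classical ABP factor $f^I_r$ of \eqref{ABP.map.r}, up to the appropriate cover map. For any genuine $C_2$-spectrum $X$ there is a natural composite $X^{C_2} \to X^{hC_2} \to X^e$, with second arrow the evaluation $\lim_{\BC} X^e \to X^e$; on $X = \kuRangle{4|I|}$ this realizes the complexification $\ko\langle 4|I|\rangle \to \ku\langle 4|I|\rangle$, while on $X = \MSpinR$ it carries $\MSpin \to (\MSpinR)^{C_2}$ to the standard inclusion $\MSpin \to \MSpin^c$. Naturality of $G$ then forces $g^I \otimes \CC$ to agree with $f^I|_{\MSpin}$ in $\ku\langle 4|I|\rangle$. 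Since $\varphi^c|_{\MSpin}$ and $\widetilde{\pi}^I|_{\BSpin}$ both come from real maps, unwinding Definition \ref{def.fI} shows that the composite $f^I|_{\MSpin} \to \KU$ factors through $\KO$ via $\pi^I_r|_{\BSpin}$; for $|I|$ odd, combining this factorization with the identity $g^I \otimes \CC = f^I|_{\MSpin}$ would produce a lift of $\pi^I_r|_{\BSpin} : \BSpin \to \KO$ through $\ko\langle 4|I|\rangle$, contradicting the classical fact (Theorem 2.1 of \cite{ABPspin67}, recalled in Remark \ref{rem.noliftko}) that the $\KO$-filtration of $\pi^I_r|_{\BSpin}$ is exactly $4|I|-2$.

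The hard part will be justifying the compatibility claims underlying the identification $g^I \otimes \CC = f^I|_{\MSpin}$: namely, that the natural forgetful $(-)^{C_2}\to(-)^{hC_2}\to(-)^e$ really implements complexification on $\kuRangle{4|I|}$ (plausible from the universal property of the connective cover together with Proposition \ref{HFP.ku.covers}) and that the unit $\MSpin \to (\MSpinR)^{C_2}$ of \cite{HK24} becomes the standard inclusion $\MSpin \to \MSpin^c$ under this forgetful. A further subtlety is the passage from the $K$-theoretic identity in $\ku\langle 4|I|\rangle$ to a genuine $\ko$-class identity on $\MSpin$, which requires injectivity of complexification on the relevant summand of $\KO^0(\MSpin)$; this should follow from the 2-local structure of $\KO$-characteristic numbers of spin manifolds used in \cite{ABPspin67} and \cite{Stong68}.
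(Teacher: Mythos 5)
This statement is a \emph{conjecture} in the paper: the authors prove only the weaker Proposition \ref{prop.nonexist.genuine} (the particular Anderson--Brown--Peterson construction does not refine genuinely, because $\widetilde{\pi}^I_{\RR}$ does not lift at the level of classifying spaces) and explicitly leave open whether some \emph{other} genuine map could have underlying map \eqref{ABP.map}. Your attempt tries to close exactly that gap, but the key step fails. Having produced $g^I : \MSpin \to \ko\langle 4|I|\rangle$ with $c\circ g^I = f^I|_{\MSpin}$, you claim this "would produce a lift of $\pi^I_r|_{\BSpin} : \BSpin \to \KO$ through $\ko\langle 4|I|\rangle$." It would not: a map out of the Thom spectrum $\MSpin$ neither determines nor is determined by a map out of $\BSpin$ --- there is no way to undo the Thom diagonal in Definition \ref{def.fI} --- so the classical filtration statement about $\pi^I_r\in\KO^0(\BSpin)$ cannot be contradicted this way. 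The correct Thom-spectrum-level obstruction would be that $f^I_r:\MSpin\to\ko\langle 4|I|-2\rangle$ is onto $\pi_{4|I|-2}\cong\Z2$ (being a split summand in \eqref{ABP.map.r}) and hence does not factor through $\ko\langle 4|I|\rangle$; but to contradict this you would need to identify $g^I$ with $f^I_r$ in real, not complex, K-theory.

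That identification is exactly where your acknowledged "further subtlety" becomes fatal rather than technical. For $|I|$ odd, the obstruction to lifting from $\ko\langle 4|I|-2\rangle$ to $\ko\langle 4|I|\rangle$ lives in $\pi_{4|I|-2}\KO\cong\Z2$, generated by $\eta^2\beta^{(|I|-1)/2}$, and complexification kills $\eta$. So the complexification $\KO^0(\MSpin)\to\KU^0(\MSpin)$ is \emph{not} injective on the relevant classes --- its kernel is precisely where the would-be contradiction lives. Indeed this is the very mechanism that makes the paper's Borel-equivariant lift \eqref{lift.ko42} in Proposition \ref{prop.equiv.lift.cover} possible, as emphasized in Remark \ref{rem.noliftko}: two maps to $\ko\langle 4|I|-2\rangle$ differing by the $\eta$-torsion obstruction have the same complexification in $\kuangle{4|I|}$, so the identity $c\circ g^I=f^I|_{\MSpin}$ is perfectly consistent with $g^I$ landing in $\ko\langle 4|I|\rangle$ while $f^I_r$ does not. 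The conjecture remains open, and any proof will need an input beyond complexified data.
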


Instead, we propose a different candidate for a genuine refinement of the equivariant splitting of Section \ref{sec.equiv.ABP}.

\begin{proposition}\label{prop.new.genuine.kr}
    For odd $n$, there exists a genuine $C_2$-spectrum, $\kuRangle{4n, 2}$, such that $$\kuRangle{4n, 2}^e \simeq \kuRangle{4n}^e \;\;\;\text{and}\;\;\; \kuRangle{4n, 2}^{C_2} \simeq \ko\langle 4n-2 \rangle,$$ whose restriction map $\mathrm{res}: \ko \langle 4n-2 \rangle \to \kuangle{4n}$ is the lift $c$ in \eqref{lift.ko42}.
\end{proposition}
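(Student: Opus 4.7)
The plan is to use the isotropy-separation (Tate) recollement of $\Sp^{C_2}$, which identifies a genuine $C_2$-spectrum $X$ with a triple $(X^e, \Phi^{C_2}X, \gamma)$, where $X^e \in \SpBC$ is the underlying spectrum with $C_2$-action, $\Phi^{C_2}X \in \Sp$ is the geometric fixed points, and $\gamma : \Phi^{C_2}X \to (X^e)^{tC_2}$ is the gluing map. Under this equivalence, the genuine fixed points are recovered as the pullback $X^{C_2} \simeq (X^e)^{hC_2} \times_{(X^e)^{tC_2}} \Phi^{C_2}X$, and the restriction map $X^{C_2} \to X|_e$ factors as the projection onto $(X^e)^{hC_2}$ followed by the canonical map $(X^e)^{hC_2} \to X|_e$ adjoint to the identity under the $(\iota, (-)^{hC_2})$-adjunction.

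First, I would translate $c : \ko\langle 4n-2\rangle \to \kuRangle{4n}^e$ in $\SpBC$ (with trivial $C_2$-action on its source) into its adjoint $c' : \ko\langle 4n-2\rangle \to (\kuRangle{4n}^e)^{hC_2}$ in $\Sp$ under this adjunction. Then I would define $\kuRangle{4n, 2}$ as the genuine $C_2$-spectrum corresponding to the triple $(\kuRangle{4n}^e, B, \gamma)$, where $(B, \gamma)$ is chosen so that the pullback $(\kuRangle{4n}^e)^{hC_2} \times_{(\kuRangle{4n}^e)^{tC_2}} B$ recovers $\ko\langle 4n-2\rangle$ with left projection equal to $c'$. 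Explicitly, $(B, \gamma)$ is realized by completing the composable pair $\ko\langle 4n-2\rangle \xrightarrow{c'} (\kuRangle{4n}^e)^{hC_2} \xrightarrow{\pi} (\kuRangle{4n}^e)^{tC_2}$ to a fiber sequence $\ko\langle 4n-2\rangle \to (\kuRangle{4n}^e)^{hC_2} \oplus B \xrightarrow{(\pi, -\gamma)} (\kuRangle{4n}^e)^{tC_2}$ in $\Sp$, whose existence is guaranteed by the stable $\infty$-categorical structure.

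Verification of the required properties is then immediate: $\kuRangle{4n, 2}^e = \kuRangle{4n}^e$ by construction, $\kuRangle{4n, 2}^{C_2} \simeq \ko\langle 4n-2\rangle$ by the pullback formula, and the restriction map $\ko\langle 4n-2\rangle \to \kuangle{4n}$ is the composite $\ko\langle 4n-2\rangle \xrightarrow{c'} (\kuRangle{4n}^e)^{hC_2} \to \kuangle{4n}$, which by the $(\iota, (-)^{hC_2})$-adjunction coincides with the underlying map of $c$. Contractibility of $\Map_{\SpBC}(\ko\langle 4n-2\rangle, \Sigma^{4n-2}\HZ_\sigma)$ (proven in Proposition \ref{prop.equiv.lift.cover}) ensures that the lift $c$ in \eqref{lift.ko42} is essentially unique, so the restriction map is identified with $c$ unambiguously. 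The main technical subtlety is in realizing $(B, \gamma)$ concretely enough to verify the pullback and check compatibility with the required gluing; this rests on a careful analysis of the map $c'$ together with the natural transformation $(-)^{hC_2} \to (-)^{tC_2}$ on $\kuRangle{4n}^e$, but once accomplished, all remaining claims are formal.
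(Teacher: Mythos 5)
Your framework is the right one, and it is essentially the paper's: both proofs build $\kuRangle{4n,2}$ from the recollement datum $(\kuRangle{4n}^e, B, \gamma: B \to \kuRangle{4n}^{tC_2})$ (the paper cites Glasman for this), and both identify the restriction map as the projection $X^{C_2} \to (X^e)^{hC_2}$ followed by the counit. The problem is that the entire content of the proposition sits in producing $(B,\gamma)$, and your proposal does not actually produce it. Given only the composable pair $\ko\langle 4n-2\rangle \xrightarrow{c'} (\kuRangle{4n}^e)^{hC_2} \xrightarrow{\pi} (\kuRangle{4n}^e)^{tC_2}$, there is \emph{no} formal completion to a fiber sequence $\ko\langle 4n-2\rangle \to (\kuRangle{4n}^e)^{hC_2}\oplus B \to (\kuRangle{4n}^e)^{tC_2}$: stability lets you complete a single map to a (co)fiber sequence, but a cartesian square with $\ko\langle 4n-2\rangle$ in the corner over the cospan $\pi, \gamma$ requires, equivalently, an identification of $\mathrm{fib}(\pi) \simeq \kuRangle{4n}_{hC_2}$ with the fiber of the top horizontal map, i.e.\ a factorization of the norm map $\kuRangle{4n}_{hC_2}\to(\kuRangle{4n}^e)^{hC_2}$ through $c'$. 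That is extra data, not guaranteed by ``the stable $\infty$-categorical structure,'' and you flag it only as a ``technical subtlety'' to be resolved later. (Your verification of $\kuRangle{4n,2}^{C_2}\simeq\ko\langle 4n-2\rangle$ is then circular: $(B,\gamma)$ was ``chosen so that the pullback recovers'' it, but the existence of such a choice is precisely what must be shown.)

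The paper closes exactly this gap by exhibiting the missing factorization explicitly: the norm map factors as $\kuRangle{4n}_{hC_2}\to\kuRangle{4n}^{C_2}\simeq\ko\langle 4n\rangle\to(\kuRangle{4n}^e)^{hC_2}$, and the triangle $\ko\langle 4n\rangle\to\ko\langle 4n-2\rangle\xrightarrow{c}(\kuRangle{4n}^e)^{hC_2}$ commutes (using the diagram defining $c$, and your observation that $\Map_{\SpBC}(\ko\langle 4n-2\rangle,\Sigma^{4n-2}\HZ_\sigma)\simeq *$ makes such comparisons unambiguous). One then \emph{defines} $B:=\mathrm{cofib}\bigl(\kuRangle{4n}_{hC_2}\to\ko\langle 4n\rangle\to\ko\langle 4n-2\rangle\bigr)$; the two parallel cofiber sequences over the identity of $\kuRangle{4n}_{hC_2}$ give the cartesian square and the gluing map $\gamma$ to the Tate construction simultaneously. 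To repair your argument, replace the ``completion'' step with this explicit construction; the rest of your outline then goes through.
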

\begin{proof}
    First, recall the Tate diagram for $\kuRangle{4n}$, 
    $$
    \begin{tikzcd}[column sep = 4mm, row sep = 9mm]
        \kuRangle{4n}_{hC_2} \arrow[rrr] \arrow[d,"=\:"'] &&& \kuRangle{4n}^{C_2} \simeq \ko\langle 4n \rangle \arrow[rrr] \arrow[d]  \arrow[drrr, phantom, "\ulcorner", near end] &&& \kuRangle{4n}^{gC_2} \arrow[d] \\
        \kuRangle{4n}_{hC_2} \arrow[rrr] &&& \kuRangle{4n}^{hC_2}  \arrow[rrr] &&& \kuRangle{4n}^{tC_2},
    \end{tikzcd}
    $$
    where the top and bottom rows are cofiber sequences. Motivated by this, define a geometric fixed point spectrum for $\kuRangle{4n,2}$ by $$\kuRangle{4n,2}^{gC_2} : = \text{cofib}(\kuRangle{4n}_{hC_2} \to  \ko\langle 4n \rangle \to  \ko\langle 4n -2 \rangle ).$$ Then notice that by \eqref{lift.ko42}, we have a commutative diagram,
    $$
    \begin{tikzcd}[column sep = -5mm, row sep = 9mm]
        \kuRangle{4n}_{hC_2} \arrow[rr] \arrow[d,"=\:"'] &\phantom{----}& \ko\langle 4n \rangle \arrow[rr] \arrow[dr] && \ko\langle 4n -2 \rangle \arrow[rr] \arrow[dl,"c"] \arrow[drr, phantom, "\ulcorner", near end] &\phantom{----}& \kuRangle{4n, 2}^{gC_2} \arrow[d] \\
        \kuRangle{4n}_{hC_2} \arrow[rrr] &&& \kuRangle{4n}^{hC_2}  \arrow[rrr] &&& P \simeq \kuRangle{4n}^{tC_2},
    \end{tikzcd}
    $$
    where $P := \kuRangle{4n}^{hC_2} \coprod_{\ko\langle 4n -2 \rangle} \kuRangle{4n, 2}^{gC_2}$. By Theorem 3.21 (Example 3.29) of \cite{Glasman17}, the triple, $$(\kuRangle{4n}^e, \kuRangle{4n, 2}^{gC_2}, \kuRangle{4n, 2}^{gC_2} \to \kuRangle{4n}^{tC_2}),$$ determines a genuine $C_2$-spectrum, $\kuRangle{4n, 2}$, which has the desired property by construction. 
\end{proof}

\begin{proposition}\label{prop.genuine.lift}
    The map $\widetilde{\pi}^I_{\RR} : \BSO \to \kuRangle{4|I|}^e$ in (the proof of) Proposition \ref{prop.equiv.lift.cover} refines to a map $ \BSO \to \kuRangle{4|I|,2}$ of genuine $C_2$-spectra.
\end{proposition}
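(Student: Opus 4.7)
The plan is to use the inflation--genuine-fixed-points adjunction $(\mathrm{infl}, (-)^{C_2})$, which for any genuine $C_2$-spectrum $Y$ gives
\begin{equation*}
\Map_{\Sp^{C_2}}(\mathrm{infl}(\BSO), Y) \simeq \Map_{\Sp}(\BSO, Y^{C_2}).
\end{equation*}
Applied to $Y = \kuRangle{4|I|,2}$, and combined with the identification $\kuRangle{4|I|,2}^{C_2} \simeq \ko\langle 4|I|-2 \rangle$ from Proposition \ref{prop.new.genuine.kr}, this reduces the construction of a genuine $C_2$-equivariant map $\BSO \to \kuRangle{4|I|,2}$ to that of an ordinary spectrum map $\BSO \to \ko\langle 4|I|-2 \rangle$. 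Such a map is supplied directly by the Anderson--Brown--Peterson lift $\widetilde{\pi}^I_r : \BSO \to \ko\langle 4|I|-2 \rangle$ of Theorem \ref{thm.filtration} (in the case $|I|$ odd, which is the only one for which $\kuRangle{4|I|,2}$ is defined). I would take as the candidate refinement the genuine map produced by transporting $\widetilde{\pi}^I_r$ across the adjunction.

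The remaining content is to verify that the underlying Borel map of this genuine lift agrees with the map $\widetilde{\pi}^I_{\RR}$ built in the proof of Proposition \ref{prop.equiv.lift.cover}. I would proceed by applying $(-)^e$, using the compatibility $(\mathrm{infl}(X))^e \simeq \iota(X)$, and chasing the counit of the adjunction. The resulting underlying map is the composite
\begin{equation*}
\BSO \xrightarrow{\widetilde{\pi}^I_r} \ko\langle 4|I|-2\rangle \xrightarrow{\mathrm{res}} \kuRangle{4|I|}^e,
\end{equation*}
where $\mathrm{res}$ is the restriction map of $\kuRangle{4|I|,2}$. By the defining property recorded in Proposition \ref{prop.new.genuine.kr}, this restriction is precisely the chosen lift $c$ from diagram \eqref{lift.ko42}, which is exactly the factorization used to build $\widetilde{\pi}^I_{\RR}$ in the first place.

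The only point of real care is bookkeeping the underlying identification: $\kuRangle{4|I|,2}$ was designed in Proposition \ref{prop.new.genuine.kr} so that its restriction equals $c$, and this compatibility is what makes the two constructions of the underlying map agree. Once this is unpacked, there is no new homotopical obstruction—the proposition follows formally from the inflation adjunction together with the choices made in constructing $\kuRangle{4|I|,2}$. I do not expect any subtlety beyond cleanly writing down the counit diagram and invoking Proposition \ref{prop.new.genuine.kr}.
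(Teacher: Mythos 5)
Your proof is correct and follows essentially the same route as the paper: the paper's (one-line) argument likewise combines the classical lift $\widetilde{\pi}^I_r : \BSO \to \ko\langle 4|I|-2\rangle$ with Proposition \ref{prop.new.genuine.kr}, the inflation adjunction and the identification of the restriction map with $c$ being exactly the implicit content you spell out.
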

\begin{proof}
    This follows directly from the existence of the classical lift $$\widetilde{\pi}^I : \BSO \to \ko\langle 4|I| -2 \rangle$$ of Anderson--Brown--Peterson\cite{ABPspin67} and Proposition \ref{prop.new.genuine.kr}.
\end{proof}

Pulling back the genuine equivariant lift of Proposition \ref{prop.genuine.lift} to a genuine refinement of $\BSpin^c_{\RR}$ would involve a more careful analysis of the Real $\Spin^c(n)$-bundles of Section \ref{sec.prelim}. In particular, the equivariant map $\BSpin^c_{\RR} \to \BSO$ does refine to a genuine equivariant map $|\mathcal{B}\Spin^c| \to \BSO$, and hence gives an equivariant map $|\mathcal{B}\Spin^c| \to \kuRangle{4|I|,2}$ in $\Sp^{C_2}$. However, it is not immediately clear if the map $\B_{\mathrm{J}} \to \BSO$ is an equivariant map of genuine $C_2$-spaces, which would be necessary in order to refine the rest of the construction in Section \ref{sec.equiv.ABP} to the genuine setting. 

\begin{question}
Does there exist a genuine splitting of $\MSpin^c_{\RR}$ whose summands consist of $\kuRangle{4n}$, $\kuRangle{4n,2}$, and suspensions of mod 2 Eilenberg--Mac Lane spectra?
\end{question}

\bibliographystyle{plain} 
\bibliography{main}

\end{document}